\theoremstyle{plain}
\newtheorem{theorem}{Theorem}[section]
\newtheorem{lemma}[theorem]{Lemma}
\newtheorem{proposition}[theorem]{Proposition}
\newtheorem{corollary}[theorem]{Corollary}
\theoremstyle{definition}
\newtheorem{definition}[theorem]{Definition}
\newtheorem{remark}[theorem]{Remark}
\newtheorem{example}[theorem]{Example}
\newcommand{\I}{\item}
\newcommand{\II}{\begin{enumerate}}
\newcommand{\III}{\end{enumerate}}
\newcommand{\dda}{\mathord{\mbox{\makebox[0pt][l]{\raisebox{-.4ex}{$\downarrow$}}$\downarrow$}}}
\newcommand{\ua}{\mathord{\uparrow}}
\newcommand{\da}{\mathord{\downarrow}}
\newcommand{\rom}[1]{\rm{\uppercase\expandafter{\romannumeral #1}}}
\def\ps@pprintTitle{%
	\let\@oddhead\@empty
	\let\@evenhead\@empty
	\def\@oddfoot{\reset@font\hfil\thepage\hfil}
	\let\@evenfoot\@oddfoot
}
\begin{document}
	
	\begin{frontmatter}
		
		\title{Sober topologies on a set}

		\author{Xiangrui Li}
		\ead{17731912783@163.com}
		\author{Qingguo Li}
		\ead{liqingguoli@aliyun.com}
		\address{School of Mathematics, Hunan University, Changsha, Hunan, 410082, China}
		\author{Dongsheng Zhao\corref{a1}}
		\ead{dongsheng.zhao@nie.edu.sg}
		\address{Mathematics and Mathematics Education, National Institute of Education, Nanyang Technological University, Singapore}
		\tnotetext[a]{This Research Supported by the National Natural Science Foundation of China(12231007)}
		\cortext[a1]{Corresponding author.}
		\begin{abstract}
			The collection of all topologies on a set $X$ forms a complete lattice with respect to the inclusion order, which have been investigated by many researchers. Sobriety is  one of the core and extensively studied properties  in non-Hausdorff topology. This property plays a crucial role in characterizing the spectral spaces of commutative rings and topological spaces determined by their lattices of open sets.  In this paper, we  investigate the statute of  sober topologies in the complete lattice of all topologies on a given set. The main results to be proved include: (1) every $T_1$ topology is the join of some sober topologies; (2) every topology is the meet of some sober topologies; (3) the set of all sober topologies is directed complete; (4) every Alexanderoff - discrete topology is the meet of some sober Alexanderoff - discrete topologies; (5) the minimal sober topologies are exactly the Scott topologies of
sup-complete chains;  (6) an example will be constructed to show that the intersection of a decreasing sequence of Hausdorff topologies need not be sober.
\end{abstract}
		
		\begin{keyword}
			sober space, lattice of topologies, sup-complete chain, Scott topology, minimal sober topology.
			\MSC 54A10;54C35; 06B30, 06B35
			
		\end{keyword}
	\end{frontmatter}
	
	In mathematics, Lattice Theory often shows to be useful in the study of the set of objects of a given type. Once the considered objects form a (complete) lattice $\mathcal{L}$, one immediately have some natural problems about a sub class $\mathcal{B}\subseteq \mathcal{L}$: (i) Is $\mathcal{B}$
closed under finite (arbitrary) joins or meets? (ii) which objects in $\mathcal{L}$ are the joins (meets, resp.) of objects in $\mathcal{B}$? (iii) what are the maximal (minimal, resp.) objects in $\mathcal{B}$, etc.  Garrett Birkhoff \cite{birkhoff-1936} first considered the lattice  of all topologies on a set with respect to  the inclusion order. Such lattices have then been extensively  studied by people  from different point of views. Some classic results include (i) the compact Hausdorff topologies are minimal Hausdorff; (ii) the lattice of all topologies on a set is complemented; (iii) the co-finite topology is the unique minimal $T_1$ topology on a set, etc.

In their paper \cite{larson-andima}, Larson and Andima presented a fairly complete survey on  the  properties of  $\Lambda(X)$ of all topologies on a fixed set $X$ satisfying a property $p$ for various $p$. See also \cite{cameron-1977} for a survey on  maximal topologies and \cite{alas-2006}\cite{berri-1964}\cite{bourbaki-1941}\cite{cameron-1971}\cite{kalapodi-tzannes-2017}\cite{mercado-aurichi-2019}\cite{ramanathan-1927} for additional results on maximal and minimal topologies.  For other work on this topic, see \cite{baldovino-costantini-2009}\cite{birkhoff-1936}\cite{steiner-1966}\cite{watson-1994}.
	
A topological space $X$ is sober if its every irreducible closed set is the closure of a unique singleton set.
Sober topologies appeared naturally in several parts of mathematics. The well known result by M. Hochster states that a topological space is homeomorphic to the spectral space of a commutative ring if and only if  (i) $X$ is sober, (ii) the compact open sets form a base of $X$, (iii) the intersection of two compact open sets is compact and (iv) $X$ is compact\cite{hochster-1969}.  The sobriety was also used in characterizing the topological spaces which are determined by their open set lattices \cite{drak-thron}. Moreover, the category of all sober spaces is reflective in the category of all $T_0$ space. With the emerging and development of domain theory, sobriety  has become one of the most extensively studied  non-Hausdorff properties, in particular for the Scott spaces of directed complete posets \cite{miao-li-xi-zhao-2023}\cite{miao-li-xi-zhao-2021}\cite{xu-xi-zhao-2021}\cite{xu-zhao}.

In this paper, we study the class  of all sober topologies in the lattice of all topologies on a fixed set. The main results to be proved include: (1) every $T_1$ topology is the join (supremum) of some sober topologies; (2) every topology is the meet of some sober topologies; (3) the set of all sober topologies is directed complete; (4) the minimal sober topologies  are exactly the Scott topologies of sup-complete chains; (5) every Alexanderoff - discrete topology is the meet of some sober Alexanderoff - discrete topologies; (6) the meet of a decreasing sequence of Hausdorff topologies need not be a sober topology.
	
	\section{Preliminary}

	Given a set $X$, the set $\mathrm{T}(X)$ of all topologies on $X$, equipped with the set inclusion order $\subseteq$,  is a complete lattice.  For any $\mathcal{A}\subseteq \mathrm{T}(X)$,
	$$\bigwedge \mathcal{A}=\inf \mathcal{A}=\bigcap \mathcal{A},$$
	and
$$\bigvee \mathcal{A}$$
 equals the topology with $\bigcup \mathcal{A}$ as a subbase. Usually, $\bigvee \mathcal{A}$ is called the topology generated by $\bigcup \mathcal{A}$.
	
	In the following, for a subset $A$ in a topological space $(X, \tau)$, we shall use $cl(A)$ or $\overline{A}$ to denote the closure of $A$. We may also use $cl_{\tau}(A)$  to denote the closure of $A$ if we wish  to indicate explicitly the topology $\tau$. Also $\Gamma(X, \tau)$ (or $\Gamma_{\tau}(X)$) will be used to denote the set of all closed sets of  $(X, \tau)$, called the {\sl co-topology} of $(X, \tau)$.
	
A non-empty subset  $A$ in a topological space $X$ is {\sl  irreducible} if for any closed sets $F, G$ in $X$, $A\subseteq F\cup G$ implies
	either $A\subseteq F$ or $A\subseteq G$.
	
	We shall use $\mathrm{Irr}(X, \tau) $ (or just $\mathrm{Irr}(X)$) to denote the set of all  irreducible sets of space $(X, \tau)$, and use $\mathrm{Irr}_c(X, \tau)$ (or just $\mathrm{Irr}_c(X)$) to denote the set of all closed irreducible sets in $(X, \tau)$.
	
	\begin{remark}\label{basic properties of irr sets}
		\II
		\I[(1)] If $A$ is an irreducible set, then so is $cl(A)$.
        \I[(2)] $A$ is irreducible if and only if for any open sets $U$ and  $V$, $A\cap U\not=\emptyset $ and $ A\cap V\not=\emptyset$ imply
        $A\cap U\cap V\not=\emptyset$.
		\I[(3)] If $f: X\longrightarrow Y$ is a continuous function between two topological spaces and $A\subseteq X$ is irreducible, then $f(A)$ is an irreducible set in $Y$.
		\I[(4)] Every singleton set $\{x\}$ is irreducible. Thus every $cl(\{x\})$ is irreducible.
		\I[(5)] If $\tau_1$ and $\tau_2$ are two topologies on a set $X$ such that $\tau_1\subseteq \tau_2$, then
		$$\mathrm{Irr}(X, \tau_2) \subseteq \mathrm{Irr}(X, \tau_1). $$
		\III
	\end{remark}
	
	\begin{definition}
		A topological space $(X, \tau)$ is {\sl sober} if for every closed irreducible set $A$ in $X$, there is a unique $x\in X$  such that $A=cl(\{x\})$.
		
		In this case, $\tau$ is also called a {\sl sober} topology on $X$.
	\end{definition}
	
	By the uniqueness of $x$ in the above definition, it follows that every sober space is $T_0$.
	
    If $X$ is a sober space, then the function $f: X\longrightarrow \mathrm{Irr}_c(X)$, defined by $f(x)=cl(\{x\})$,  is a bijection, hence   $|X|=|\mathrm{Irr}_c(X)|$.

	\begin{lemma}\label{irr set in generated topo}
		Let $\{\tau_i: i\in I\}\subseteq \mathrm{T}(X)$ and $\tau=\bigvee\{\tau_i: i\in I\}$. Then for any $A\in \mathrm{Irr}(X,\tau)$,
		$$cl_{\tau}(A)=\bigcap\{cl_{\tau_i}(A): i\in I\}.$$
		
		In particular, for each $A\in \mathrm{Irr}_c(X, \tau)$,
		$$A=\bigcap\{cl_{\tau_i}(A): i\in I\}.$$
	\end{lemma}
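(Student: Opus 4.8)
The plan is to prove the two inclusions of the identity $cl_{\tau}(A)=\bigcap_{i\in I}cl_{\tau_i}(A)$ separately. The inclusion $cl_{\tau}(A)\subseteq\bigcap_{i\in I}cl_{\tau_i}(A)$ is routine: since $\tau_i\subseteq\tau$, the complement of any $\tau_i$-closed set is $\tau_i$-open hence $\tau$-open, so $\Gamma(X,\tau_i)\subseteq\Gamma(X,\tau)$. In particular each $cl_{\tau_i}(A)$ is a $\tau$-closed set containing $A$, and as $cl_{\tau}(A)$ is the smallest such set, $cl_{\tau}(A)\subseteq cl_{\tau_i}(A)$ for every $i$; intersecting over $i$ gives the containment. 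Note this direction uses only $\tau_i\subseteq\tau$ and not irreducibility.

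The substance of the lemma is the reverse inclusion $\bigcap_{i\in I}cl_{\tau_i}(A)\subseteq cl_{\tau}(A)$. Here I would fix a point $x\in\bigcap_{i\in I}cl_{\tau_i}(A)$ and show directly that $x\in cl_{\tau}(A)$, i.e. that every $\tau$-open neighbourhood of $x$ meets $A$. Because $\bigcup_{i\in I}\tau_i$ is a subbase for $\tau$, the finite intersections of subbasic sets form a base, so it suffices to test basic neighbourhoods: sets of the form $W=U_1\cap\cdots\cap U_n$ with $x\in W$ and each $U_k\in\tau_{i_k}$ for some index $i_k\in I$. Since $x\in cl_{\tau_{i_k}}(A)$ and $U_k$ is a $\tau_{i_k}$-open neighbourhood of $x$, each individual factor satisfies $U_k\cap A\neq\emptyset$.

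The key step, and the point at which irreducibility enters, is to upgrade ``each factor meets $A$'' to ``the whole intersection meets $A$''. Each $U_k$ is in fact $\tau$-open because $\tau_{i_k}\subseteq\tau$, so I may apply the characterization of irreducibility in Remark~\ref{basic properties of irr sets}(2). Applying it inductively to the finitely many $\tau$-open sets $U_1,\dots,U_n$, the relations $A\cap U_1\neq\emptyset,\dots,A\cap U_n\neq\emptyset$ yield $A\cap U_1\cap\cdots\cap U_n=A\cap W\neq\emptyset$. As $W$ ranges over all basic $\tau$-neighbourhoods of $x$, every $\tau$-open neighbourhood of $x$ meets $A$, so $x\in cl_{\tau}(A)$, which establishes the reverse inclusion.

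I expect this finitary passage to be the only real obstacle, and it is precisely the one place where the hypothesis $A\in\mathrm{Irr}(X,\tau)$ is used; without it the equality genuinely fails, since the individual factors $U_k$ may each meet $A$ while their intersection does not. Finally, the ``in particular'' clause is immediate: if $A\in\mathrm{Irr}_c(X,\tau)$ then $A$ is $\tau$-closed, so $cl_{\tau}(A)=A$, and the displayed identity reads $A=\bigcap_{i\in I}cl_{\tau_i}(A)$.
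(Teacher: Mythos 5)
Your proof is correct and is essentially the same as the paper's: both directions hinge on testing a basic neighbourhood $U_1\cap\cdots\cap U_n$ built from the subbase $\bigcup_{i\in I}\tau_i$ and then applying irreducibility of $A$ in $(X,\tau)$ to finitely many sets. The only difference is presentational — you argue directly via the open-set characterization of irreducibility (Remark~\ref{basic properties of irr sets}(2)), while the paper argues contrapositively with the closed-set form ($A\subseteq U_1^c\cup\cdots\cup U_m^c$ forces $A\subseteq U_{k'}^c$ for some $k'$), which is the same statement after taking complements.
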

	
	\begin{proof}
	Let $B=\bigcap\{cl_{\tau_i}(A): i\in I\}$. Since $\tau_i\subseteq \tau$ for each $i\in I$, $cl_{\tau}(A)\subseteq cl_{\tau_i}(A)$ for each $i\in I$.  Hence, $cl_{\tau}(A)\subseteq B$.
		
	For any  $x\in X - cl_{\tau}(A)$, as $X-cl_{\tau}(A)\in \tau$ and $\tau$ has $\bigcup\{\tau_i: i\in I\}$ as a subbase, there are $U_k\in\tau_{i_k} (k =1, 2, \cdots, m)$ such that $x\in \bigcap\{U_k: k=1, 2, \cdots, m\}\subseteq X-cl_{\tau}(A)$. Then $A\subseteq cl_{\tau}(A) \subseteq U_{1}^c\cup U_{2}^c\cup\cdots\cup U_{m}^c$, where $U_{k}^{c}$ is the complement of $U_{k}$.
 Note that for each $k$, $U_k\in\tau$, thus $U_{k}^c$ is a closed set in $(X, \tau)$. By the irreducibility of $A$, $A\subseteq U_{k'}^c$ for some $k'$. Then, as $x\in U_{k'}\in \tau_{i_{k'}}$ and $A\cap U_{k'}=\emptyset$, we have that   $x\not\in cl_{\tau_{i_{k'}}}(A)$, implying $x\not\in B$.  Thus $B\subseteq cl_{\tau}(A)$, therefore $cl_{\tau}(A)=B$ as desired.
	\end{proof}

	Note that $cl_{\tau}(A)=\bigcap\{cl_{\tau_i}(A): i\in I\}$ may fail to be true if $A$ is not irreducible.

	\begin{remark}\label{finite space is sober}
		\II
		\I[(1)] Let $X$ be a finite $T_0$ space. Then $cl(\{x\})\not=cl(\{y\})$ for any $x, y\in X$ if $x\not=y$.
		For any $F\in \mathrm{Irr}_{c}(X)$, $F=\bigcup\{cl(\{x\}): x\in F\}$. Since $F\subseteq X$ is a finite set, we have $F=cl(\{x\})$ for some $x\in F$. It follows that $X$ is sober.
		\I[(2)] Every $T_2$ topology is sober.
		\III
	\end{remark}

For any $T_0$ space $(X, \tau)$, the {\sl specialization order} $\le_{\tau}$, which is  a partial order on $X$,  is defined by
$$x\le_{\tau} y \mbox{ if and only if } x\in cl(\{y\}).$$

A nonempty subset $D$ of a poset $P$ is {\sl directed } if  for any two elements $x, y\in D$, there is a $z\in D$ such that $x\le z, y\le z$.
A poset $P$ is {\sl directed complete }  if for any directed subset $D\subseteq P$, $\bigvee D$ exists in $P$.
A directed complete poset will also be called a dcpo.

A subset $A$ of a poset $P$ is an upper (lower, resp.) set if $A=\ua A=\{x\in P: x\geq a \mbox{ for some } a\in A\}$ ($A=\da A=\{x\in P: x\leq a \mbox{ for some } a\in A\}$, resp.)

A subset $U$ of a poset $P$ is Scott open if  (i) $U$ is an upper set, and (ii) for any directed set $D$ with $\sup D$ existing, $\sup D\in U$ implies $D\cap U\not=\emptyset$.
All Scott open sets of a poset $P$ form a topology on $P$, called the Scott topology of $P$ and is denoted by $\sigma(P)$.

For any poset $(P, \le)$, the specialization order $\le_{\sigma}$ of $(P, \sigma(P))$ coincides with the partial order $\le$  on $P$.

A subset $A$ of a space $(X, \tau)$ is saturated if it equals the intersection of all open sets containing $A$.  A $T_0$  space $X$ is  well-filtered if for any open set $U$ and filter $\mathcal{F}$ of saturated compact subsets of $X$,
$$\bigcap\mathcal{F}\subseteq U \mbox{ implies } F\subseteq U \mbox{ for some } F\in\mathcal{F}.$$
 	
A $T_0$ space $(X, \tau)$ is called a d-space (or monotone convergence space) if  $(X, \le_{\tau})$ is a dcpo  and every $U\in \tau$ is a Scott open set of $(X, \le_{\tau})$ (that is, $\tau\subseteq \sigma(X, \le_{\tau})$). By \cite{li-yuan-zhao-2020}, $(X, \tau)$ is a d-space if and only if for any directed set $D$ of $(X, \le_{\tau})$ and open set $U$,
$$\bigcap\{\uparrow d: d\in D\}\subseteq U \mbox{ implies } d\in U \mbox{ for some } d\in D,$$
here $\ua  d=\ua\{d\}$.

Every sober space is well-filtered and every well-filtered space is a d-space. For any d-space $(X, \tau)$, $(X,\le_{\tau})$ is a dcpo.

See \cite{Gier-2003} and \cite{Goubault-2013} for more about Scott topologies, sober spaces, well-filtered spaces and d-spaces.

Let $x, y$ are elements in a poset $P$. We say that $x$ is \emph{way-below}  $y$, in symbols $x\ll y$, if for every directed subset $D\subseteq P$ for which $\sup D$ exists,  $y\leqslant \sup D$  implies the existence of a $d\in D$ with $x\leqslant d$.
	For each $x\in P$, let $\dda x=\{y\in P: y\ll x\}$.
	
	A dcpo $P$ is called a {\sl domain } if for each $x\in P$, $\dda x$ is a directed set and $x=\bigvee \dda x$.
	
    For every domain $P$, $(P, \sigma(P))$ is sober \cite{Gier-2003} and \cite{Goubault-2013}.

	\section{Joins  of sober topologies}
	
	In  this section, we consider  the following problem: Which topologies $\tau\in \mathrm{T}(X)$ are the join of sober topologies?
 The main result is that every $T_1$ topology is such a topology. There is a $T_0$ topology which is not the join of sober topologies.
	
	For any set $X$, we shall use $\mathrm{T}_{sob}(X)$ to denote  the set of all sober topologies on  $X$.

	\begin{proposition}
		If $\tau\in \mathrm{T}(X)$ is the join of a finite number of sober topologies, then $|X|=|Irr_c(X, \tau)|,$ here $|X|$ is the cardinality of $X$.
	\end{proposition}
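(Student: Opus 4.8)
The plan is to write $\tau=\tau_1\vee\cdots\vee\tau_n$ with each $\tau_i$ sober, and then to establish a matching pair of cardinality inequalities. The inequality $|X|\le|\mathrm{Irr}_c(X,\tau)|$ is the easy half: each $\tau_i$ is sober, hence $T_0$, and a $T_0$-separation occurring in some $\tau_i$ already separates points in the finer topology $\tau$, so $(X,\tau)$ is $T_0$. Consequently $x\mapsto cl_\tau(\{x\})$ is injective, and by Remark \ref{basic properties of irr sets}(4) its image lies in $\mathrm{Irr}_c(X,\tau)$, which gives $|X|\le|\mathrm{Irr}_c(X,\tau)|$.

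For the reverse direction I would build an injection $\varphi\colon\mathrm{Irr}_c(X,\tau)\to X^n$. Fix $A\in\mathrm{Irr}_c(X,\tau)$. Since $\tau_i\subseteq\tau$, Remark \ref{basic properties of irr sets}(5) gives $A\in\mathrm{Irr}(X,\tau_i)$, and then Remark \ref{basic properties of irr sets}(1) shows $cl_{\tau_i}(A)\in\mathrm{Irr}_c(X,\tau_i)$. Because $\tau_i$ is sober there is a \emph{unique} $x_i\in X$ with $cl_{\tau_i}(A)=cl_{\tau_i}(\{x_i\})$; set $\varphi(A)=(x_1,\dots,x_n)$. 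The key point is that $A$ is recoverable from this tuple: Lemma \ref{irr set in generated topo} yields
$$A=\bigcap_{i=1}^{n}cl_{\tau_i}(A)=\bigcap_{i=1}^{n}cl_{\tau_i}(\{x_i\}),$$
so $\varphi$ is injective and therefore $|\mathrm{Irr}_c(X,\tau)|\le|X|^n$.

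It remains to reconcile the two bounds, and this is the one genuinely delicate step, since the injection only controls $|\mathrm{Irr}_c(X,\tau)|$ by $|X|^n$ rather than by $|X|$ outright. When $X$ is infinite, cardinal arithmetic gives $|X|^n=|X|$ for the finite exponent $n\ge 1$, and the two inequalities squeeze out $|X|=|\mathrm{Irr}_c(X,\tau)|$. When $X$ is finite the bound $|X|^n$ is too coarse, so I would bypass the injection altogether: a finite $T_0$ space is sober by Remark \ref{finite space is sober}(1), and we have already observed that $(X,\tau)$ is $T_0$; hence $(X,\tau)$ is itself sober and the bijection $x\mapsto cl_\tau(\{x\})$ delivers the equality directly. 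The empty case is vacuous. Thus the argument splits on the (in)finiteness of $X$, and the main obstacle is precisely recognizing that the finite case must rest on finite sobriety rather than on the tuple injection.
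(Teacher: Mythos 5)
Your proposal is correct and essentially reproduces the paper's proof: the same initial observation that $\tau$ is $T_0$, the same split into a finite case (handled by sobriety of finite $T_0$ spaces) and an infinite case, and the same injection argument whose injectivity rests on Lemma \ref{irr set in generated topo} followed by the cardinal arithmetic $|X|^n=|X|$. The only cosmetic difference is that you land the injection in $X^n$ by passing to the generic points $x_i$, whereas the paper lands it in $\prod_{i}\mathrm{Irr}_c(X,\tau_i)$ and then invokes $|\mathrm{Irr}_c(X,\tau_i)|=|X|$; these are the same map up to the sobriety bijections.
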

	\begin{proof}
		Let $\tau=\bigvee\{\tau_i: i\in D\}$ with $D=\{1, 2, \cdots, m\}$ and each $\tau_i$ sober.
		
		Since $\tau$ is finer than the  sober topology $\tau_1$, which is $T_0$, $\tau$ is a $T_0$ topology.
		
		If $X$ is a finite set, then by Remark \ref{finite space is sober},  $(X, \tau)$ is sober, thus
		$|X|=|\{cl(\{x\}): x\in X\}|=|\mathrm{Irr}_c(X, \tau)|.$
		
		Next, we assume that $X$ is an infinite set.
		
		Since $X$ is $T_0$, for any $x, y\in X$, $x=y$ if and only if $cl(\{x\})=cl(\{y\})$.
Hence,  $|X|=|\{cl(\{x\}): x\in X\}| \le |\mathrm{Irr}_c(X, \tau)|$ because  $cl(\{x\})\in \mathrm{Irr}_c(X, \tau)$  for each $x\in X$.
		
		For each $A\in \mathrm{Irr}_c(X, \tau)$, by Lemma \ref{irr set in generated topo},
		$$A=\bigcap\{cl_{\tau_i}(A): i\in D\}.$$
		
		Also $cl_{\tau_i}(A)\in \mathrm{Irr}_c(X, \tau_i)$ for each $i\in D$, by Remark \ref{basic properties of irr sets}(4).
		Thus there is an injective  $\phi: \mathrm{Irr}_c(X, \tau) \longrightarrow  \prod_{i\in D}\mathrm{Irr}_c(X, \tau_i)$ defined by
		$$\phi(A)=(cl_{\tau_1}(A), cl_{\tau_2}(A), \cdots, cl_{\tau_m}(A)),$$
		for each $A\in Irr_c(X, \tau)$.
		
		So $|X|\le |Irr_c(X, \tau)|\le |\prod_{i\in D}\mathrm{Irr}_c(X, \tau_i)|=|\mathrm{Irr}_c(X, \tau_1)|\times |\mathrm{Irr}_c(X, \tau_2)|\times\cdots\times |\mathrm{Irr}_c(X, \tau_m)|.$
		
		Since each $(X, \tau_i)$ is sober, $|X|=|\mathrm{Irr}_c(X, \tau_i)|$.
		Therefore,  $ |\mathrm{Irr}_c(X, \tau_1)|\times |\mathrm{Irr}_c(X, \tau_2)|\times\cdots\times |\mathrm{Irr}_c(X, \tau_m)|=|X|^m=|X|$, because $|X|$ is infinite.
		All these together deduce that
		$$|X|=|\mathrm{Irr}_c(X, \tau)|.$$
	\end{proof}
	
	\begin{example}\label{tau 1}
		Let  $\tau_{cof}$ be the co-finite topology on the set $\mathbb{N}$ of all positive integers ($U\in \tau_{cof}$ if and only if either $U=\emptyset$ or $\mathbb{N} - U$ is a finite set).

		Then $(\mathbb{N}, \tau_{cof})$ is a $T_1$ space. As $\mathbb{N}\in \mathrm{Irr}_{c}(X, \tau_{cof})$ and $\mathbb{N}\not=cl(\{x\})$ for any $x$, $(\mathbb{N}, \tau_{cof})$ is not sober.

		Let $\tau_{1}$ be the topology on $\mathbb{N}$ such that $U\in \tau_1$ iff either $U=\emptyset$ or $1\in U$ and $\mathbb{N} - U$ is a finite set.
		
		Let $\tau_{2}$ be the topology on $\mathbb{N}$ such that $U\in \tau_2$ iff either $U=\emptyset$ or $2\in U$ and $\mathbb{N} - U$ is a finite set.
		
		Then we can verify that both $\tau_1$ and $\tau_2$ are sober topologies and  $\tau_{cof}=\tau_1\vee \tau_2$.
Thus the  join of finite numbers of sober topologies need not be sober.
	\end{example}
	
The example below shows that not every topology is the join of sober topologies.
	\begin{example}
Let $\mathbb{R}$ be the set of all real numbers and $\tau=\{(r, +\infty): r\in \mathbb{R}\}\cup \{\mathbb{R}\}$ be the upper topology on $\mathbb{R}$. Then $\tau$ is a minimal $T_0$ topology on $\mathbb{R}$ by a characterization of minimal $T_0$ topologies given in \cite{larson-1969}. Hence, any topology strictly smaller than $\tau$ is not $T_0$, thus not sober. It follows that this $T_0$ topology  $\tau$
is not the join of sober topologies.

In general, any non-sober minimal $T_0$ topology is not the join of sober topologies.

	\end{example}
	
	
	However,  if $\tau$ is a $T_1$ topology, then it is the join of  sober topologies.
	
	\begin{theorem}\label{t1 topologies are joins of sober topologies}
		For any $T_1$ topology $\tau\in \mathrm{T}(X)$, there is a set $\mathcal{A}\subseteq \mathrm{T}_{sob}(X)$ such that
		$$\tau = \bigvee \mathcal{A}.$$
	\end{theorem}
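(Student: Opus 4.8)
The plan is to take $\mathcal{A}$ to be the set of \emph{all} sober topologies contained in $\tau$, that is $\mathcal{A}=\{\sigma\in\mathrm{T}_{sob}(X):\sigma\subseteq\tau\}$. Then $\bigvee\mathcal{A}\subseteq\tau$ is immediate because each member is contained in $\tau$. Since $\bigcup\mathcal{A}$ is a subbase of $\bigvee\mathcal{A}$, the reverse inclusion reduces to the following \emph{local} assertion: every $U\in\tau$ is open in at least one member of $\mathcal{A}$. Indeed, granting this, every nonempty $U\in\tau$ already lies in $\bigcup\mathcal{A}\subseteq\bigvee\mathcal{A}$, so $\tau\subseteq\bigvee\mathcal{A}$ and hence $\tau=\bigvee\mathcal{A}$.

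First I would record why the naive candidate fails, since this dictates the construction. For $a\in X$ put $\tau_a=\{\emptyset\}\cup\{W\in\tau:a\in W\}$; this is a topology with $\tau_a\subseteq\tau$, and $\bigvee_{a\in X}\tau_a=\tau$ because every nonempty $\tau$-open set belongs to $\tau_a$ for each of its points. Using that $\tau$ is $T_1$ one checks $cl_{\tau_a}(\{a\})=X$ and $cl_{\tau_a}(\{y\})=\{y\}$ for $y\neq a$, and that for any $\tau$-closed $F$ with $a\notin F$ the subspace topologies induced by $\tau_a$ and by $\tau$ on $F$ coincide (if $W\in\tau$ then $W\cup(X\setminus F)\in\tau_a$ has the same trace on $F$). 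Consequently $\tau_a$ is sober if and only if $a$ lies in every $F\in\mathrm{Irr}_c(X,\tau)$ with $|F|\ge 2$. Thus the only obstruction is the family
$$\mathcal{I}=\{F\in\mathrm{Irr}_c(X,\tau):|F|\ge 2\}$$
of irreducible closed ``defects'', which in the $T_1$ setting never possess a generic point; when $\mathcal{I}=\emptyset$ every $\tau_a$ is already sober and we are done.

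To handle $\mathcal{I}$ the idea is to \emph{relocate} generic points. Given a selection $s\colon\mathcal{I}\to X$ with $s(F)\in F$, set
$$\sigma_s=\{W\in\tau:\ \text{for every }F\in\mathcal{I},\ W\cap F\neq\emptyset\ \Rightarrow\ s(F)\in W\}.$$
A direct check (the defining condition is preserved by finite intersections and arbitrary unions) shows $\sigma_s$ is a topology with $\sigma_s\subseteq\tau$, and for $y\in F$ every $\sigma_s$-open set containing $y$ must contain $s(F)$, so $F\subseteq cl_{\sigma_s}(\{s(F)\})$ and $s(F)$ becomes a generic point of $F$. For the local assertion, given $U\in\tau$ choose $s(F)\in F\cap U$ whenever $F\cap U\neq\emptyset$ and $s(F)\in F$ arbitrarily otherwise; then every $F$ meeting $U$ satisfies $s(F)\in U$, so $U$ is $s$-saturated and $U\in\sigma_s$. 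On the model space $X=\mathbb{N}\cup\{p\}$ with $W$ open iff $W\cap\mathbb{N}$ is empty or cofinite, one has $\mathcal{I}=\{\mathbb{N}\}$ and $\sigma_s$ is exactly the sober topology ``$s(\mathbb{N})$ generic on $\mathbb{N}$, $p$ isolated'', which supplies the open set $\{p\}$ that no $\tau_a$ can.

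The main obstacle is to choose $s$ so that $\sigma_s$ is genuinely \emph{sober}, and this is where the real work lies. Three points must be controlled. First, one must show $cl_{\sigma_s}(\{s(F)\})=F$ exactly, i.e. the relocated generic point does not overshoot, which can fail if a single point is forced to serve several defects. Second, coarsening enlarges the supply of irreducible sets (Remark \ref{basic properties of irr sets}(5)), so one must verify that $\sigma_s$ creates \emph{no} new non-singleton irreducible closed set lacking a generic point. Third, when the members of $\mathcal{I}$ overlap or are nested, the values $s(F)$ must be chosen coherently along the inclusion order of $\mathcal{I}$ so that the first two conditions hold simultaneously; the natural tool is a transfinite selection along a well-ordering of $\mathcal{I}$ (or a Zorn-type maximality argument), taking each generic point inside the prescribed $U$ whenever required. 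Securing sobriety of $\sigma_s$ under such a coherent selection is the crux; granting it, the reduction of the first paragraph immediately yields $\tau=\bigvee\mathcal{A}$.
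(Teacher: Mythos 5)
Your first paragraph is correct, and it is exactly the reduction the paper's proof of Theorem~\ref{t1 topologies are joins of sober topologies} uses: it suffices to exhibit, for each $U\in\tau$, one sober topology $\sigma\subseteq\tau$ with $U\in\sigma$. The genuine gap is the step you defer with ``granting it'': that $s$ can be chosen so that $\sigma_s$ is sober. This is not a loose end that a transfinite or Zorn-type coherence argument can tie up; for your construction it is provably impossible in general. Take the paper's own example following Lemma~\ref{join of sob topologies}: $X=\mathbb{Q}\cap[0,1]$, with closed sets the finite sets and the sets $([0,x]\cap\mathbb{Q})\cup F$ ($x\in[0,1]$, $F$ finite). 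This $\tau$ is $T_1$, and $\mathcal{I}=\{[0,x]\cap\mathbb{Q}:x\in(0,1]\}$; distinct reals $x$ give distinct members, so $\mathcal{I}$ is uncountable although $X$ is countable. Take $U=X\setminus\{0\}$. Every member of $\mathcal{I}$ meets $U$, so admissibility forces $s_x:=s([0,x]\cap\mathbb{Q})\in(0,x]\cap\mathbb{Q}$ for every $x\in(0,1]$; let $V=\{s_x:x\in(0,1]\}$, which is infinite since $0<s_x\le x$ forces $\inf V=0\notin V$. For $q\in V$ put $D_q=\{\beta\in[q,1]:\ s_{x'}>\beta\ \mbox{for all}\ x'>\beta\}$ and $\beta_q=\inf D_q$ (with $\inf\emptyset:=1$); one checks that $\beta_q\in D_q$ whenever $D_q\neq\emptyset$, and, using that every $\sigma_s$-closed set is $\tau$-closed, that $cl_{\sigma_s}(\{q\})=[0,\beta_q]\cap\mathbb{Q}$ (equal to $X$ when $\beta_q=1$). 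Two facts follow. (i) The real intervals $[q,\beta_q]$, $q\in V$, cover $(0,1]$: if $x>\beta_q$ for $q=s_x$, then $D_q\neq\emptyset$ (otherwise $\beta_q=1\geq x$), so $\beta_q\in D_q$ gives $s_x>\beta_q\geq q=s_x$, absurd. (ii) If $\sigma_s$ were $T_0$, these intervals would be pairwise disjoint: for $q<q'$ in $V$, $D_q\cap[q,q')=\emptyset$ would give $D_q=D_{q'}$ and hence $cl_{\sigma_s}(\{q\})=cl_{\sigma_s}(\{q'\})$, so some $\beta\in D_q$ satisfies $\beta_q\le\beta<q'$. But (i) and (ii) together would partition $[0,1]$ into the countably infinitely many pairwise disjoint nonempty closed sets $\{0\}$ and $[q,\beta_q]$ ($q\in V$), which is impossible by the classical theorem of Sierpi\'nski that a continuum admits no such partition. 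Hence for this $\tau$ and this $U$, the topology $\sigma_s$ is not even $T_0$ --- let alone sober --- no matter how the selection is chosen.

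The obstruction is structural, and it explains why the paper's witness looks so different from yours. By keeping every $\tau$-open set that is saturated under $s$, you force $\sigma_s$ to retain a large fragment of $\Gamma(X,\tau)$; when $\mathcal{I}$ is uncountable and $X$ is countable, each relocated generic point must serve uncountably many defects, and the specialization order collapses. The paper's proof goes the opposite way: for $A=U^{c}$ it fixes two points $x_A\in A$, $y_A\notin A$ and takes the topology $\tau_A$ whose closed sets are only the finite sets missing $x_A,y_A$, the sets $A\cup F$ for such finite $F$, and $X$. There $\mathrm{Irr}_c(X,\tau_A)$ can be listed by hand ($\{x\}$ for $x\notin\{x_A,y_A\}$, $A=cl_{\tau_A}(\{x_A\})$, $X=cl_{\tau_A}(\{y_A\})$), so sobriety is immediate; the $T_1$ hypothesis enters only to guarantee $\tau_A\subseteq\tau$; and $U=A^{c}\in\tau_A$ by construction. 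Your reduction, combined with a coarse ``almost cofinite'' witness of this kind rather than the saturation construction $\sigma_s$, does complete the proof.
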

	\begin{proof}
		We only need to consider the non-trivial  case where $(X, \tau)$ is not sober, thus $X$ must be an infinite set.

		(1) For each closed proper nonempty subset $A\subseteq X$, choose two points $x_A\in A$ and $y_A\not\in A$.
		Let
		$$\tau_{A}=\{U\subseteq X: x_A, y_A\in U \mbox{ and } U^c \mbox{ is finite }\} \cup \{A^c\cap V: V\subseteq X~\mathrm{with}~ x_A, y_A\in V~ \mathrm{ and }~ V^c \mbox{ is finite}\}.$$
One easily verify that $\tau_A$ is indeed a topology on $X$.

		The closed sets of $\tau_A$ are
		$$\{F: F\subseteq X - \{x_A, y_A\}~ \mathrm{ and }~ F \mbox{ is finite}\}\cup \{A\cup F: F\subseteq X-\{x_A, y_A\}~ \mathrm{ and }~F \mbox{ is finite}\}\cup \{X\}.$$
		
		In particular, $A=A\cup \emptyset$ is closed in $(X, \tau_A)$, that is,  $A^c\in \tau_A$.
		
		It is easily seen  that  $\tau_A$ is $T_0$ and
$$\mathrm{Irr}_c(X, \tau_A)=\{\{x\}: x\not\in\{x_A, y_A\}\}\cup \{A, X\}.$$
		
		If $x\not\in\{x_A, y_A\}$, then $\{x\}=cl_{\tau_A}(\{x\})$. Also $A=cl_{\tau_A}(\{x_A\})$ and $X=cl_{\tau_A}(\{y_A\})$.
		Therefore, $\tau_A$ is a sober topology.
\vskip 0.2cm
		Also, as $(X, \tau)$ is $T_1$, $\tau_A\subseteq \tau$ holds, therefore
$\bigvee\{\tau_A:  A^{c} \in \tau, A\not=X, A\not=\emptyset\}\subseteq \tau.$
\vskip 0.5cm

		(2) Now let $U\in\tau$.
	\vskip 0.2cm
		Case 1: $U=X~or~\emptyset$. Then $U\in\tau_{A}$ for each $\tau_{A}$.
	\vskip 0.2cm
		Case 2: $U\not=X$ and $U\not=\emptyset$.
		Let $A=U^{c}$.
		Then $A$ is a proper, nonempty closed set of $(X,\tau_A)$.
		By (1), $U=A^c\in\tau_A$.
		
		Hence $\tau\subseteq \bigvee\{\tau_{A}: A \mbox{ is a nonempty proper closed set of  } (X, \tau)\}.$

All these together then show that $\tau$ equals the join of these sober topologies $\tau_{A}'s$.
		
		The theorem is proved.
	\end{proof}
	
	\begin{remark}
\II
\I[(1)] The reader may wonder whether the above theorem can be strengthened to that every $T_1$ topology is the join of  $T_1$ sober topologies.
		Consider the co-finite topology $\tau_{cof}$ on the set $\mathbb{N}$ of all positive integers.
		Then $\tau_{cof}$ is the coarsest $T_1$ topology on $\mathbb{N}$. Thus if $\tau$ is a $T_1$ sober topology on $\mathbb{N}$, then $\tau_{cof}\subseteq \tau$ and $\tau_{cof}\not= \tau$. Hence $\tau_{cof}$ is not the join of $T_1$ sober topologies.
\I[(2)]
For each of the topology $\tau_A$ constructed in the proof of Theorem \ref{t1 topologies are joins of sober topologies}, $\tau_A$ actually possesses  several other properties, such as (i) $\tau_A$ is connected and locally connected; (ii) every subset of $X$ is compact.
\III
	\end{remark}

	An element $a$ of a complete lattice $L$ is  {\sl strongly irreducible} if for any $C\subseteq L, C\not=\emptyset$ and $a=\bigvee C$ imply $a=c$ for some $c\in C$ \cite{drak-thron}\cite{xul-zhao}.
	
	By Theorem \ref{t1 topologies are joins of sober topologies}, we easily deduce the following.
	
	\begin{corollary}
		If $\tau\in \mathrm{T}(X)$ is a $T_1$ topology and a strongly irreducible element of $\mathrm{T}(X)$, then $\tau$ is sober.
	\end{corollary}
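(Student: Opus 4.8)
The plan is to read the corollary as an immediate consequence of Theorem~\ref{t1 topologies are joins of sober topologies} together with the definition of strong irreducibility, once the degenerate cases are separated out.

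First I would dispose of the trivial situations. If $|X|\le 1$, then $\mathrm{T}(X)$ consists only of sober topologies: for such $X$ every topology is $T_0$, and the only possible nonempty irreducible closed set is $X=cl(\{x\})$, so $\tau$ is sober and there is nothing to prove. Likewise, if $(X,\tau)$ happens already to be sober, the conclusion is immediate. Hence the genuine case is $|X|\ge 2$ with $(X,\tau)$ not sober, which, since $\tau$ is $T_1$, forces $X$ to be infinite.

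In this case I would apply Theorem~\ref{t1 topologies are joins of sober topologies} to the $T_1$ topology $\tau$, obtaining a family $\mathcal{A}\subseteq \mathrm{T}_{sob}(X)$ with $\tau=\bigvee\mathcal{A}$. The crucial observation is that this $\mathcal{A}$ can be taken \emph{nonempty}: in the construction of that theorem the sober topologies are indexed by the nonempty proper closed subsets $A$ of $(X,\tau)$, and since $\tau$ is $T_1$ with $|X|\ge 2$, every singleton is such a set (closed by the $T_1$ property and proper), so at least one witness $\tau_A$ occurs. Now the hypothesis that $\tau$ is strongly irreducible in the complete lattice $\mathrm{T}(X)$ applies to the nonempty family $\mathcal{A}$ with $\tau=\bigvee\mathcal{A}$, yielding $\tau=\sigma$ for some $\sigma\in\mathcal{A}$. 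Because every member of $\mathcal{A}$ is a sober topology, $\sigma$ is sober, and therefore $\tau=\sigma$ is sober.

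The argument is essentially a single deduction combining the two cited facts; the only point deserving attention — and the sole potential obstacle — is confirming that the join witness $\mathcal{A}$ is nonempty, since strong irreducibility is asserted only for nonempty families $C$. This is precisely why the cases $|X|\le 1$ (where $\bigvee\emptyset$, the indiscrete topology, could intervene) must be handled separately at the outset.
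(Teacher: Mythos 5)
Your proposal is correct and follows exactly the route the paper intends: the paper states the corollary as an immediate consequence of Theorem~\ref{t1 topologies are joins of sober topologies} plus the definition of strong irreducibility, with no further argument given. Your additional care in checking that the join witness $\mathcal{A}$ can be taken nonempty (needed because strong irreducibility is only asserted for nonempty families) is a sound refinement of that same deduction, not a different approach.
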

	
A subset $C$ of a poset $(P, \le)$ is an upper set, if $x\le y$ and $x\in C$ imply $y\in C$, equivalently, if
$C=\ua\{y\in P: x\le y ~\mathrm{ for~some }~ x\in C\}$.

The following result shows that the subset of $\mathrm{T}(X)$ consisting of all $T_1$ sober topologies is an upper set of  $\mathrm{T}(X)$. In addition, all $T_D$ sober topologies is also an upper set of $\mathrm{T}(X)$.
	
	Recall that a space$(X, \tau)$ is $T_D$ if for each $x\in X$, $cl(\{x\}) -\{x\}$ is a closed set. Trivially, every $T_1$ spaces is $T_D$.

The following lemma should have been proved by other people. For reader's convenience,  we give a brief proof.

	\begin{lemma}\label{td}
		Let $(X,\tau)$ be a topological space. Then the following statements are equivalent.
		\II
		\I[(1)] $(X,\tau)$ is a $T_D$ space.
		
		\I[(2)] For any $x\in X$ and $A\subseteq X$, $cl(A)=cl(\{x\})$ implies $x\in A$.
		\III
	\end{lemma}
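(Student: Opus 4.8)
The plan is to prove the two implications directly, relying only on the elementary properties of the closure operator: monotonicity ($A\subseteq B$ implies $cl(A)\subseteq cl(B)$), idempotency ($cl(cl(A))=cl(A)$), and the universal membership $x\in cl(\{x\})$ for every point $x$. No deeper machinery seems necessary, since the hypothesis $cl(A)=cl(\{x\})$ already packages the relevant closure information.

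For the implication $(1)\Rightarrow(2)$, I would argue by contradiction. Assume $(X,\tau)$ is $T_D$ and suppose $cl(A)=cl(\{x\})$ but $x\notin A$. Then $A\subseteq cl(\{x\})-\{x\}$, and the latter set is closed by the $T_D$ hypothesis. Applying closure and monotonicity yields $cl(A)\subseteq cl(\{x\})-\{x\}$, so $cl(A)$ omits $x$. This contradicts $cl(A)=cl(\{x\})$, since $x\in cl(\{x\})$. Hence $x\in A$.

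For the implication $(2)\Rightarrow(1)$, I would fix $x\in X$, set $B=cl(\{x\})-\{x\}$, and show $cl(B)=B$. Since $B\subseteq cl(\{x\})$, monotonicity and idempotency give $cl(B)\subseteq cl(\{x\})$, so it suffices to prove $x\notin cl(B)$, which then forces $cl(B)\subseteq cl(\{x\})-\{x\}=B$. Suppose instead $x\in cl(B)$. Then $cl(\{x\})\subseteq cl(B)$, and combined with $cl(B)\subseteq cl(\{x\})$ this gives $cl(B)=cl(\{x\})$. Applying hypothesis (2) with $A=B$ produces $x\in B$, contradicting the definition $B=cl(\{x\})-\{x\}$. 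Therefore $x\notin cl(B)$ and $B$ is closed.

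There is no genuinely hard step here; both directions are short contradiction arguments. The only point requiring care is recognizing that in each direction the contradiction is driven by the same tension, namely that $x$ always lies in $cl(\{x\})$ while the set $cl(\{x\})-\{x\}$ is designed to exclude it. I would make sure to invoke monotonicity and idempotency of $cl$ explicitly rather than leaving them implicit, since these are exactly the facts that let me pass from the set-level containments to the closure-level equalities.
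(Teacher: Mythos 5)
Your proposal is correct and follows essentially the same argument as the paper: both directions proceed by contradiction with the same sets, using that $cl(\{x\})-\{x\}$ is closed under the $T_D$ hypothesis for $(1)\Rightarrow(2)$, and setting $B=cl(\{x\})-\{x\}$ and deriving $cl(B)=cl(\{x\})$ to invoke (2) for the converse. The only difference is cosmetic: you spell out explicitly the step $cl(B)=cl(\{x\})$ before applying (2), which the paper leaves implicit.
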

	\begin{proof}

Let $(X, \tau)$ be $T_D$. Assume that $cl(\{x\})=cl(A)$ for some $x\in X$ and $A\subseteq X$.
If $x\not\in A$, then $A\subseteq cl(\{x\})-\{x\}$.
Hence $cl(A)\subseteq cl(cl(\{x\})-\{x\})=cl(\{x\})-\{x\}\subsetneq cl(\{x\})$, a contradiction.
		
Now assume that $(X, \tau)$ satisfies (2).  Let $x\in X$ and $A=cl(\{x\})-\{x\}$. If $cl(A)\not=A$, then, as
$A\subseteq cl(A)\subseteq cl(\{x\})=A\cup \{x\}$, it follows that $x\in cl(A)$. By (2), $x\in A=cl(\{x\})-\{x\}$, which is not possible. Hence $cl(A)=A=cl(\{x\}) -\{x\}$, showing that $cl(\{x\}) -\{x\}$ is closed. Hence $(X, \tau)$ is $T_D$.
	\end{proof}
	
	\begin{proposition}\label{sober td spaces}
\II
\I[(1)] The set of all $T_D$ topologies on $X$ is an upper set of $\mathrm{T}(X)$.
\I[(2)]  If $\tau$ is a $T_D$ and sober topology on $X$, then for any $\mu\in \mathrm{T}(X)$, $ \tau\subseteq \mu $ implies $\mu$ is $T_D$ and sober.
\III
	\end{proposition}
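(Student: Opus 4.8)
The plan is to treat the two parts separately, deriving (1) straight from the definition of $T_D$, and obtaining the only non-trivial assertion of (2)---the sobriety of $\mu$---from a single observation that converts the generic point supplied by $\tau$ into a generic point for $\mu$.

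For part (1), I would fix $x\in X$ and show that $cl_{\mu}(\{x\})-\{x\}$ is $\mu$-closed whenever $(X,\tau)$ is $T_D$ and $\tau\subseteq\mu$. Since $\tau$ is $T_D$, the set $cl_{\tau}(\{x\})-\{x\}$ is $\tau$-closed, hence $\mu$-closed, so its complement $V$ is a $\mu$-open set containing $x$. Because $\tau\subseteq\mu$ forces $cl_{\mu}(\{x\})\subseteq cl_{\tau}(\{x\})$, a short computation gives $cl_{\mu}(\{x\})\cap V=\{x\}$, whence $cl_{\mu}(\{x\})-\{x\}=cl_{\mu}(\{x\})\cap(X-V)$ is an intersection of two $\mu$-closed sets and therefore $\mu$-closed. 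This shows $(X,\mu)$ is $T_D$, i.e. that the $T_D$ topologies form an upper set of $\mathrm{T}(X)$.

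For part (2), the $T_D$-ness of $\mu$ is immediate from part (1); and since $T_D$ implies $T_0$ (apply Lemma \ref{td} with $A$ a singleton), uniqueness of generic points for $\mu$ comes for free, so it remains to prove existence. Given $F\in\mathrm{Irr}_c(X,\mu)$, I would first descend to $\tau$: by Remark \ref{basic properties of irr sets}(5) the set $F$ is $\tau$-irreducible, so by Remark \ref{basic properties of irr sets}(1) $cl_{\tau}(F)\in\mathrm{Irr}_c(X,\tau)$, and sobriety of $\tau$ yields a unique $x$ with $cl_{\tau}(F)=cl_{\tau}(\{x\})$. Applying Lemma \ref{td} to the $T_D$ space $(X,\tau)$ (with $A=F$) gives $x\in F$, and since $F$ is $\mu$-closed this already yields $cl_{\mu}(\{x\})\subseteq F$.

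The crux is the reverse inclusion $F\subseteq cl_{\mu}(\{x\})$, and this is where I expect the only genuine difficulty: refining a topology shrinks point-closures, so $F\subseteq cl_{\tau}(\{x\})$ does not directly hand us $F\subseteq cl_{\mu}(\{x\})$. The key observation is that $F-\{x\}$ is $\mu$-closed: since $\tau$ is $T_D$, the set $cl_{\tau}(\{x\})-\{x\}$ is $\tau$-closed hence $\mu$-closed, and because $F\subseteq cl_{\tau}(F)=cl_{\tau}(\{x\})$ we get $F-\{x\}=F\cap\big(cl_{\tau}(\{x\})-\{x\}\big)$, an intersection of two $\mu$-closed sets. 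Then $F=cl_{\mu}(\{x\})\cup(F-\{x\})$ exhibits $F$ as a union of two $\mu$-closed sets; as $x\notin F-\{x\}$, irreducibility of $F$ forces $F\subseteq cl_{\mu}(\{x\})$, giving $F=cl_{\mu}(\{x\})$. Combined with the uniqueness noted above, this shows $\mu$ is sober, completing the proof.
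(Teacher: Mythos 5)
Your proposal is correct and takes essentially the same approach as the paper: part (1) via the identity $cl_{\mu}(\{x\})-\{x\}=cl_{\mu}(\{x\})\cap\bigl(cl_{\tau}(\{x\})-\{x\}\bigr)$, and part (2) by passing to $\tau$-irreducibility, using sobriety of $\tau$ and Lemma \ref{td} to get $x\in F$, and then splitting $F$ into the two $\mu$-closed pieces $cl_{\mu}(\{x\})$ and a $\mu$-closed set containing $F-\{x\}$ but not $x$, so that irreducibility forces $F=cl_{\mu}(\{x\})$. The only cosmetic difference is that you argue directly with the cover $cl_{\mu}(\{x\})\cup(F-\{x\})$ where the paper argues by contradiction with the cover $\bigl(cl_{\tau}(\{x\})-\{x\}\bigr)\cup cl_{\mu}(\{x\})$.
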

	\begin{proof}

(1) Let $\tau$ be a $T_D$ topology and $\tau\subseteq\mu\in T(X)$. For any $x\in X$, $cl_{\mu}(\{x\})-\{x\}=cl_{\mu}(\{x\})\cap (cl_{\tau}(\{x\})-\{x\})$ by $cl_{\mu}(\{x\})\subseteq cl_{\tau}(\{x\})$. Now $cl_{\tau}(\{x\})-\{x\}\in\Gamma(X,\tau)\subseteq \Gamma(X,\mu)$, thus $cl_{\mu}(\{x\})-\{x\}=cl_{\mu}(\{x\})\cap (cl_{\tau}(\{x\})-\{x\}) \in \Gamma(X,\mu)$.
Hence $(X, \mu)$ is $T_D$.
		
(2)  By  (1), $\mu$ is $T_D$. Assume $C\in Irr_c(X,\mu)$. Then $C\in Irr(X,\tau)$ and so $cl_{\tau}(C)=cl_{\tau}(\{x\})$ for some $x\in X$. By Lemma \ref{td}(2), $x\in C$. We now prove that $C=cl_{\mu}(\{x\})$.
	Assume that $C\not=cl_{\mu}(\{x\})$, so $C - cl_{\mu}(\{x\})\not=\emptyset$. Then $C\subseteq cl_{\tau}(\{x\}) \subseteq (cl_{\tau}(\{x\})-\{x\})\cup cl_{\mu}(\{x\})$, and  $C\not\subseteq (cl_{\tau}(\{x\})-\{x\})$ and $C\not\subseteq cl_{\mu}(X)$. Note that $cl_{\tau}(\{x\})-\{x\}\in \Gamma(X, \tau)\subseteq \Gamma(X, \mu)$ and $cl_{\mu}(\{x\})\in \Gamma(X, \mu)$. These contradict to that $C\in Irr_c(X,\mu)$.
Thus $C=cl_{\mu}(\{x\})$. Hence $(X, \mu)$ is sober.

	\end{proof}
	
	\begin{corollary}\label{sober t1 spaces}
		If $\tau$ is a $T_1$ and  sober topology on $X$, then for any $\mu\in \mathrm{T}(X)$, $ \tau\subseteq \mu $ implies $\mu$ is $T_1$ and sober.
	\end{corollary}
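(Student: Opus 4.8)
The plan is to deduce this directly from Proposition~\ref{sober td spaces}(2), using the elementary fact that $T_1$ implies $T_D$ together with the observation that the class of $T_1$ topologies forms an upper set of $\mathrm{T}(X)$.

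First I would note that since $\tau$ is $T_1$, it is in particular $T_D$ (as recorded just before Lemma~\ref{td}, every $T_1$ space is $T_D$). Hence $\tau$ is a $T_D$ and sober topology on $X$, so Proposition~\ref{sober td spaces}(2) applies: for any $\mu\in\mathrm{T}(X)$ with $\tau\subseteq\mu$, the topology $\mu$ is $T_D$ and sober. This already delivers sobriety of $\mu$ with no extra work.

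It then remains only to upgrade ``$T_D$'' to ``$T_1$'' for $\mu$. For this I would observe that a topology is $T_1$ exactly when every singleton is closed, and that this property is inherited by finer topologies: if $\{x\}\in\Gamma(X,\tau)$ for every $x$, then from $\tau\subseteq\mu$ we have $\Gamma(X,\tau)\subseteq\Gamma(X,\mu)$, so $\{x\}\in\Gamma(X,\mu)$ for every $x$, i.e. $\mu$ is $T_1$. Combining with the previous paragraph shows that $\mu$ is both $T_1$ and sober, as claimed.

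I do not anticipate any genuine obstacle, since the statement is essentially a specialization of Proposition~\ref{sober td spaces}(2). The only point one must not overlook is that that proposition yields merely the $T_D$-ness of $\mu$; the separate (and routine) remark that $T_1$-ness passes to finer topologies is what is needed to conclude $\mu$ is $T_1$ rather than only $T_D$.
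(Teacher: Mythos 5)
Your proposal is correct and follows exactly the paper's own argument: $T_1$ implies $T_D$, so Proposition~\ref{sober td spaces}(2) gives sobriety of $\mu$, while $T_1$-ness of $\mu$ follows from it being finer than a $T_1$ topology. Your explicit justification of the latter via closed singletons is a minor elaboration of what the paper states in one clause.
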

	\begin{proof} The topology $\mu$ is $T_1$ as it is finer than a $T_1$ topology. Since  $\tau$ is $T_1$, it is $T_D$.  By Proposition \ref{sober td spaces},  $\mu$ is also sober.
	\end{proof}
	
\begin{corollary}
For any collection $\{\tau_i: i\in I\}$ of  $T_1$ and sober topologies on a set $X$,
$$\bigvee \{\tau_i: i\in I\}$$
is $T_1$ and sober.
\end{corollary}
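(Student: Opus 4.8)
The plan is to reduce the whole statement to the upper-set property already isolated in Corollary~\ref{sober t1 spaces}, so that essentially no new work is required. First I would dispose of the degenerate case $I=\emptyset$ by convention (there $\bigvee\{\tau_i:i\in I\}$ is the indiscrete topology $\{\emptyset,X\}$, which fails to be $T_1$ as soon as $|X|\ge 2$); thus I assume the family is nonempty and fix any single index $i_0\in I$. The point to exploit is simply that, in the complete lattice $\mathrm{T}(X)$, the join $\bigvee\{\tau_i:i\in I\}$ is an upper bound of the family, so $\tau_{i_0}\subseteq\bigvee\{\tau_i:i\in I\}$.

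Now I would invoke Corollary~\ref{sober t1 spaces} with $\tau=\tau_{i_0}$ and $\mu=\bigvee\{\tau_i:i\in I\}$. By hypothesis $\tau_{i_0}$ is $T_1$ and sober, and we have just observed $\tau_{i_0}\subseteq\mu$; hence that corollary yields immediately that $\mu=\bigvee\{\tau_i:i\in I\}$ is $T_1$ and sober. This completes the argument.

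I do not expect any genuine obstacle here. The content of the corollary is that being finer than \emph{one} $T_1$ sober topology already forces $T_1$ sobriety, which is exactly the upper-set conclusion of Proposition~\ref{sober td spaces}; no feature of the join beyond the single inclusion $\tau_{i_0}\subseteq\bigvee\{\tau_i:i\in I\}$ is used, and in particular neither the subbase description of the join (via Lemma~\ref{irr set in generated topo}) nor any cardinality count enters. The only subtlety worth flagging is the empty-family case handled above; everything else is a one-line application of the previously established monotonicity.
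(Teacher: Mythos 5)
Your proof is correct and is essentially the paper's intended argument: the corollary is stated immediately after Corollary~\ref{sober t1 spaces} precisely because it follows by applying that corollary to a single $\tau_{i_0}\subseteq\bigvee\{\tau_i:i\in I\}$. Your explicit handling of the empty-family case is a reasonable extra precaution, since the paper tacitly assumes the collection is nonempty.
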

		
	
	The following propositions can be found in \cite[Lemma 13.3]{kechris} and
	\cite[Lemma 72]{de2013quasi}, respectively.
	
	\begin{proposition}
		If $\tau_0$ is a quasi-polish topology on $X$ and $\{\tau_i: i\in \mathbb{N}\}$ is a set of quasi-polish topologies on $X$  such that $\tau_0\subseteq \tau_i$ for each $i\in \mathbb{N}$, then $\tau=\bigvee\{\tau_i: i\in \mathbb{N}\}$ is quasi-polish.
	\end{proposition}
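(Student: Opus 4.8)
The plan is to realize $(X,\tau)$, where $\tau=\bigvee\{\tau_i:i\in\mathbb{N}\}$, as a $\Pi^0_2$ subspace of a countable product of quasi-polish spaces, and then to invoke two standard facts from de Brecht's theory \cite{de2013quasi}: (a) a countable product of quasi-polish spaces is quasi-polish; and (b) a subspace of a quasi-polish space is quasi-polish if and only if it is $\Pi^0_2$. I would also use the routine closure properties of de Brecht's Borel hierarchy, namely that $\Pi^0_2$ is closed under countable intersections.

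First I would form the product $P=(X,\tau_0)\times\prod_{i\in\mathbb{N}}(X,\tau_i)$, which is quasi-polish by (a), and consider the diagonal map $\Delta\colon X\to P$ given by $\Delta(x)=(x,x,x,\dots)$. Since $\tau_0\subseteq\tau_i\subseteq\tau$ for every $i$, each composite of $\Delta$ with a coordinate projection is the identity regarded as a continuous map $(X,\tau)\to(X,\tau_0)$ or $(X,\tau)\to(X,\tau_i)$, so $\Delta$ is continuous. Moreover the initial topology on $X$ induced by $\Delta$ is generated by $\tau_0\cup\bigcup_i\tau_i$ as a subbase, which is exactly $\tau$; and because $(X,\tau_0)$ is $T_0$, $\Delta$ is injective. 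Hence $\Delta$ is a topological embedding of $(X,\tau)$ onto $\Delta(X)$, and it remains only to show that $\Delta(X)$ is $\Pi^0_2$ in $P$.

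The crucial step is to express the image. Writing a point of $P$ as $(x_0;(x_i)_i)$, we have $\Delta(X)=\{(x_0;(x_i)):x_i=x_0 \text{ for all } i\}$. Fix a countable base $\{B_k:k\in\mathbb{N}\}$ of $(X,\tau_0)$. Since $\tau_0$ is $T_0$ and second countable, two points $a,b$ of $X$ satisfy $a=b$ precisely when they belong to exactly the same $B_k$'s. As $B_k\in\tau_0\subseteq\tau_i$, each set $\pi_0^{-1}(B_k)$ and $\pi_i^{-1}(B_k)$ is open in $P$, and therefore
\[
\Delta(X)=\bigcap_{i\in\mathbb{N}}\bigcap_{k\in\mathbb{N}} E_{i,k},\qquad
E_{i,k}=\big(\pi_i^{-1}(B_k)\cap\pi_0^{-1}(B_k)\big)\cup\big((\pi_i^{-1}(B_k))^c\cap(\pi_0^{-1}(B_k))^c\big).
\]
Each $E_{i,k}$ records the biconditional "$x_i\in B_k \Leftrightarrow x_0\in B_k$". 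I would then observe that the complement $E_{i,k}^c=(\pi_i^{-1}(B_k)\cap(\pi_0^{-1}(B_k))^c)\cup((\pi_i^{-1}(B_k))^c\cap\pi_0^{-1}(B_k))$ is a union of two locally closed sets (open $\cap$ closed), hence $\Sigma^0_2$, so $E_{i,k}$ is $\Pi^0_2$. A countable intersection of $\Pi^0_2$ sets is again $\Pi^0_2$, whence $\Delta(X)\in\Pi^0_2(P)$. By (b), $\Delta(X)$ is quasi-polish, and since $(X,\tau)\cong\Delta(X)$, the join $(X,\tau)$ is quasi-polish.

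The main obstacle is exactly this last point. In the Polish (Hausdorff) setting of Kechris's lemma the diagonal is closed, i.e.\ $\Pi^0_1$, so the corresponding step is immediate; in the quasi-polish setting the factors are only $T_0$, the diagonal need not be closed, and the best achievable complexity is $\Pi^0_2$. Securing this forces one to test equality of coordinates through the common coarser base $\{B_k\}\subseteq\tau_0$, which is precisely where the hypothesis $\tau_0\subseteq\tau_i$ is indispensable: it guarantees that each $B_k$ is simultaneously open in every factor $(X,\tau_i)$, so that the sets $\pi_i^{-1}(B_k)$ are open in $P$. The remaining work is careful bookkeeping in de Brecht's hierarchy to confirm that "open $\cup$ closed" lies in $\Pi^0_2$ and that $\Pi^0_2$ is closed under countable intersection.
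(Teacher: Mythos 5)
Your proof is correct, and it is essentially the argument behind the result the paper relies on: the paper does not prove this proposition at all, but quotes it from the literature (Kechris, Lemma 13.3, and de Brecht, Lemma 72 of the cited quasi-Polish spaces paper). Your diagonal embedding into the countable product, with the image shown to be $\Pi^0_2$ by testing equality of coordinates through a countable base of the coarse $T_0$ topology $\tau_0$, is precisely the standard proof of de Brecht's lemma --- the quasi-Polish adaptation of Kechris's closed-diagonal argument --- so there is nothing to correct or add.
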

	
	\begin{proposition}
	If $\tau_0$ is a quasi-polish topology on $X$ and $\{\tau_i: i\in \mathbb{N}\}$ is a set of polish topologies on $X$  such that $\tau_0\subseteq \tau_i$ for each $i\in \mathbb{N}$, then $\tau=\bigvee\{\tau_i: i\in\mathbb{N}\}$ is polish.
	\end{proposition}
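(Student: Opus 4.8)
The plan is to realize $(X,\tau)$ as a $G_\delta$ subspace of a Polish space and then invoke the classical theorem of Alexandrov that a subspace of a Polish space is Polish exactly when it is $G_\delta$. Concretely, I would form the product $P=\prod_{i\in\mathbb{N}}(X,\tau_i)$; since each $(X,\tau_i)$ is Polish and a countable product of Polish spaces is Polish, $P$ is Polish. Consider the diagonal map $\Delta\colon X\to P$ given by $\Delta(x)=(x)_{i\in\mathbb{N}}$. Because $\tau_i\subseteq\tau$ for every $i$, the map $\Delta$ is continuous from $(X,\tau)$ into $P$; it is evidently injective, and the subbasic subspace-open sets $\Delta(X)\cap\pi_i^{-1}(U)=\Delta(U)$ (with $U\in\tau_i$) correspond under the bijection $\Delta$ precisely to the subbase $\bigcup_i\tau_i$ of $\tau$. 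Hence $\Delta$ is a homeomorphism of $(X,\tau)$ onto $\Delta(X)$ with its subspace topology, and it remains only to show that $\Delta(X)$ is $G_\delta$ in $P$.

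The step where the argument must work harder, and the one I expect to be the main obstacle, is this $G_\delta$ computation: since $\tau_0$ is only $T_0$ and not Hausdorff, the diagonal need not be closed, so one cannot simply argue by closedness. Here I would use $\tau_0$ as a common second countable coarsening. Fix a countable base $\{B_n:n\in\mathbb{N}\}$ of $(X,\tau_0)$. As $\tau_0\subseteq\tau_i$ for all $i$, each $B_n$ lies in every $\tau_i$, so each $\pi_i^{-1}(B_n)$ is open in $P$. Because $(X,\tau_0)$ is $T_0$, two points of $X$ coincide if and only if they belong to exactly the same sets $B_n$; therefore
\[
\Delta(X)=\bigcap_{i,j,n}\Big(\big(\pi_i^{-1}(B_n)\cap\pi_j^{-1}(B_n)\big)\cup\big((\pi_i^{-1}(B_n))^c\cap(\pi_j^{-1}(B_n))^c\big)\Big).
\]
Each set in this countable intersection is the union of an open set and a closed set of $P$. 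Since $P$ is metrizable, every closed set is $G_\delta$, and the class of $G_\delta$ sets is closed under finite unions and countable intersections; hence each factor is $G_\delta$, and so is $\Delta(X)$. By Alexandrov's theorem $\Delta(X)$ is Polish, and therefore so is $(X,\tau)$.

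I also note a more structural route that leans on the preceding proposition. By that result $\tau=\bigvee\{\tau_i:i\in\mathbb{N}\}$ is quasi-Polish, since each Polish $\tau_i$ is quasi-Polish and $\tau_0\subseteq\tau_i$. On the other hand, the same diagonal $\Delta$ embeds $(X,\tau)$ into $P$, which is metrizable as a countable product of metrizable spaces, so $(X,\tau)$ is metrizable; and a metrizable quasi-Polish space is Polish. Either way, the essential content is the interplay between the common second countable $T_0$ coarsening $\tau_0$, which detects equality of points and absorbs the failure of Hausdorffness through a $G_\delta$ (rather than a closedness) argument, and the completeness supplied by the Polish factors.
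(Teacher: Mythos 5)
Your proof is correct, but note that the paper does not actually prove this proposition: it is quoted from de Brecht (Lemma 72 of \cite{de2013quasi}), alongside the Polish-case analogue \cite[Lemma 13.3]{kechris}, so there is no in-paper argument to compare against; your proposal supplies a genuine, self-contained proof. The diagonal embedding of $(X,\tau)$ onto $\Delta(X)\subseteq P=\prod_{i\in\mathbb{N}}(X,\tau_i)$ is the standard device from the classical Polish case, and you correctly isolate where that classical argument breaks: when $\tau_0$ is merely $T_0$ one cannot separate $x_i\neq x_j$ by disjoint $\tau_0$-open sets, so $\Delta(X)$ need not be closed in $P$. Your replacement is valid: since quasi-Polish spaces are second countable, a countable $\tau_0$-base $\{B_n\}$ exists, the $T_0$ property gives exactly the displayed identity for $\Delta(X)$, each factor is the union of an open set and a closed set of the metrizable space $P$, closed sets there are $G_\delta$, and the identity $\bigl(\bigcap_n U_n\bigr)\cup\bigl(\bigcap_m V_m\bigr)=\bigcap_{n,m}(U_n\cup V_m)$ shows finite unions of $G_\delta$ sets are $G_\delta$; Alexandrov's theorem then finishes the argument. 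In fact you use only that $\tau_0$ is second countable and $T_0$, which is weaker than quasi-Polish, so your argument proves slightly more than the stated proposition. Your alternative structural route (the join is quasi-Polish by the preceding proposition, metrizable because it embeds in the metrizable space $P$, and metrizable quasi-Polish spaces are Polish) is also sound and is closer in spirit to how de Brecht derives such results, but it leans on two heavier cited facts rather than on an elementary computation.
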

	
\vskip 1cm
For sober topologies, we have a similar result where the countable  index set $\mathbb{N}$ can be replaced by any set.
	
	\begin{lemma}\label{irr sets of compa topo}
		Let $\tau_1, \tau_2$ be two sober topologies on a set $X$ with $\tau_1\subseteq \tau_2$.
		Then we have the following.
		\II
		\I[(1)] $\mathrm{Irr}_c(X, \tau_1) =\{cl_{\tau_1}(A): A\in \mathrm{Irr}_c(X, \tau_2)\}.$
		\I[(2)] For any $a, b\in X$, $cl_{\tau_1} (cl_{\tau_2}(\{a\}))=cl_{\tau_1}(\{b\})$ implies $a=b$.
		\III
	\end{lemma}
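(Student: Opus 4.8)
The plan is to prove (1) by establishing the two inclusions separately, and then to derive (2) from a single identity about iterated closures.

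The auxiliary fact I will rely on is that, since $\tau_1 \subseteq \tau_2$, every $\tau_1$-closed set is also $\tau_2$-closed, so for every $S \subseteq X$ we have $cl_{\tau_2}(S) \subseteq cl_{\tau_1}(S)$. Applying this to a singleton and using that $cl_{\tau_1}$ is monotone and idempotent gives, for each $x \in X$,
$$cl_{\tau_1}(cl_{\tau_2}(\{x\})) = cl_{\tau_1}(\{x\}),$$
because $\{x\} \subseteq cl_{\tau_2}(\{x\}) \subseteq cl_{\tau_1}(\{x\})$. This identity is used in both parts.

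For (1), I first show $\{cl_{\tau_1}(A) : A \in \mathrm{Irr}_c(X, \tau_2)\} \subseteq \mathrm{Irr}_c(X, \tau_1)$: given $A \in \mathrm{Irr}_c(X, \tau_2)$, Remark \ref{basic properties of irr sets}(5) yields $A \in \mathrm{Irr}(X, \tau_1)$, and then Remark \ref{basic properties of irr sets}(1) shows that $cl_{\tau_1}(A)$ is irreducible and closed in $\tau_1$, i.e. $cl_{\tau_1}(A) \in \mathrm{Irr}_c(X, \tau_1)$. For the reverse inclusion, take $B \in \mathrm{Irr}_c(X, \tau_1)$; since $\tau_1$ is sober, $B = cl_{\tau_1}(\{x\})$ for some $x \in X$. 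Setting $A = cl_{\tau_2}(\{x\})$, which is closed and irreducible in $\tau_2$ by Remark \ref{basic properties of irr sets}(4), the identity above gives $cl_{\tau_1}(A) = cl_{\tau_1}(cl_{\tau_2}(\{x\})) = cl_{\tau_1}(\{x\}) = B$, establishing the equality in (1).

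For (2), suppose $cl_{\tau_1}(cl_{\tau_2}(\{a\})) = cl_{\tau_1}(\{b\})$. The identity rewrites the left-hand side as $cl_{\tau_1}(\{a\})$, so $cl_{\tau_1}(\{a\}) = cl_{\tau_1}(\{b\})$; since $\tau_1$ is sober it is $T_0$, which forces $a = b$. I expect no serious obstacle here: the only point requiring care is keeping the inclusion $cl_{\tau_2}(S) \subseteq cl_{\tau_1}(S)$ (and not the reverse) oriented correctly under the finer topology $\tau_2$, after which both parts follow immediately. Note that sobriety of $\tau_2$ is needed only for the reverse inclusion in (1), whereas (2) uses only that $\tau_1$ is $T_0$.
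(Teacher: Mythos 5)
Your proof is correct and follows essentially the same route as the paper: both arguments hinge on the identity $cl_{\tau_1}(cl_{\tau_2}(\{x\}))=cl_{\tau_1}(\{x\})$ (which the paper derives inline via $F\subseteq cl_{\tau_1}(cl_{\tau_2}(\{x_0\}))\subseteq cl_{\tau_1}(cl_{\tau_1}(\{x_0\}))=cl_{\tau_1}(\{x_0\})=F$), together with sobriety of $\tau_1$ for the reverse inclusion in (1) and $T_0$-ness of $\tau_1$ for (2). One slip in your closing remark only: it is sobriety of $\tau_1$, not $\tau_2$, that the reverse inclusion uses --- indeed your argument, like the paper's, never invokes sobriety of $\tau_2$ at all.
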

	\begin{proof}
	\rm{(1)} By Remark \ref{basic properties of irr sets} (1) (4),
$\{cl_{\tau_1}(A): A\in \mathrm{Irr}_c(X, \tau_2)\} \subseteq \mathrm{Irr}_c(X, \tau_1).$
			Now let $F\in Irr_c(X, \tau_1)$. Thus $F=cl_{\tau_1}(\{x_0\})$ for a unique $x_0\in X$.
			Now $cl_{\tau_2}(\{x_0\})\in \mathrm{Irr}_c(X, \tau_2)$ because $\{x_0\}$ is an irreducible set in $(X, \tau_2)$.
			So, $F\subseteq cl_{\tau_1}(cl_{\tau_2}(\{x_0\}))$ because $\{x_0\}\subseteq cl_{\tau_2}(\{x_{0}\})$.
			Also $cl_{\tau_2}(\{x_0\})\subseteq cl_{\tau_1}(\{x_0\})$, it follows that $F\subseteq cl_{\tau_1}(cl_{\tau_2}(\{x_0\}))\subseteq   cl_{\tau_1}(cl_{\tau_1}(\{x_0\}))= cl_{\tau_1}(\{x_0\})=F.$ Hence,
			$$ F=cl_{\tau_1}(K),$$
			where $K=cl_{\tau_2}(\{x_0\})\in \mathrm{Irr}_c(X, \tau_2)$.
			So the equation holds.
			
			\rm{(2)} If $cl_{\tau_1} (cl_{\tau_2}(\{a\}))=cl_{\tau_1}(\{b\})$ with $a, b\in X$,
			then, similar to the proof of (1), we deduce  that  $cl_{\tau_1}(\{a\}) = cl_{\tau_1} (cl_{\tau_2}(\{a\}))=cl_{\tau_{1}}(\{b\})$. Thus $a=b$ because $(X, \tau_1)$ is $T_0$.
			\end{proof}

	\begin{proposition}\label{2.10}
		If $\tau_0$ is a sober topology on $X$ and $\{\tau_i: i\in I\}$ is a set of sober topologies on $X$  such that $\tau_0\subseteq \tau_i$ for each $i\in I$, then $\tau=\bigvee\{\tau_i: i\in I\}$ is sober.
	\end{proposition}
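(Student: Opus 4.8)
The plan is to show that every $\tau$-closed irreducible set is the $\tau$-closure of a necessarily unique point; uniqueness is automatic, since $\tau$ is finer than the sober (hence $T_0$) topology $\tau_0$ and is therefore itself $T_0$. So I would fix $A\in \mathrm{Irr}_c(X,\tau)$ and aim only to produce a point whose $\tau$-closure is $A$. First I would invoke Lemma \ref{irr set in generated topo} to write $A=\bigcap_{i\in I}cl_{\tau_i}(A)$. Because $\tau_i\subseteq\tau$, Remark \ref{basic properties of irr sets}(5) gives $A\in \mathrm{Irr}(X,\tau_i)$, so $cl_{\tau_i}(A)\in \mathrm{Irr}_c(X,\tau_i)$; soberness of each $\tau_i$ then furnishes a unique $x_i\in X$ with $cl_{\tau_i}(A)=cl_{\tau_i}(\{x_i\})$. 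Likewise $A\in\mathrm{Irr}(X,\tau_0)$, so soberness of $\tau_0$ yields a point $x_0$ with $cl_{\tau_0}(A)=cl_{\tau_0}(\{x_0\})$.

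The crux of the argument --- and the step I expect to be the main obstacle --- is to prove that all these witnesses coincide, that is, $x_i=x_0$ for every $i\in I$. This is exactly where the common coarser sober topology $\tau_0$ must do its work: without such a coarser sober bound the witnesses can genuinely disagree, which is precisely why the join of two arbitrary sober topologies need not be sober (Example \ref{tau 1}). To glue the $x_i$ together I would compute the $\tau_0$-closure of $cl_{\tau_i}(A)$. Since $\tau_0\subseteq\tau_i$ gives $\Gamma(X,\tau_0)\subseteq\Gamma(X,\tau_i)$, we have $cl_{\tau_i}(A)\subseteq cl_{\tau_0}(A)$, and combining this with $A\subseteq cl_{\tau_i}(A)$ yields
$$cl_{\tau_0}\big(cl_{\tau_i}(\{x_i\})\big)=cl_{\tau_0}\big(cl_{\tau_i}(A)\big)=cl_{\tau_0}(A)=cl_{\tau_0}(\{x_0\}).$$
Now Lemma \ref{irr sets of compa topo}(2), applied to the pair $\tau_0\subseteq\tau_i$ with $a=x_i$ and $b=x_0$, forces $x_i=x_0$.

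With this in hand the conclusion is immediate. Each $cl_{\tau_i}(A)=cl_{\tau_i}(\{x_0\})$, so Lemma \ref{irr set in generated topo} gives $A=\bigcap_{i\in I}cl_{\tau_i}(\{x_0\})$. On the other hand $\{x_0\}$ is irreducible in $(X,\tau)$ by Remark \ref{basic properties of irr sets}(4), so a second application of Lemma \ref{irr set in generated topo}, now to the irreducible set $\{x_0\}$, yields $cl_{\tau}(\{x_0\})=\bigcap_{i\in I}cl_{\tau_i}(\{x_0\})=A$. Hence $A=cl_\tau(\{x_0\})$, and since $\tau$ is $T_0$ the point $x_0$ is unique, completing the proof that $\tau$ is sober.
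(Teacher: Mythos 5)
Your proposal is correct and follows essentially the same route as the paper's proof: both apply Lemma \ref{irr set in generated topo} to write $A=\bigcap_{i\in I}cl_{\tau_i}(A)$, use sobriety of each $\tau_i$ and of $\tau_0$ to obtain witnesses, glue them via the identity $cl_{\tau_0}(cl_{\tau_i}(\{x_i\}))=cl_{\tau_0}(\{x_0\})$ together with Lemma \ref{irr sets of compa topo}(2), and finish with a second application of Lemma \ref{irr set in generated topo} to the irreducible set $\{x_0\}$. The only difference is cosmetic: you derive the gluing identity from $A\subseteq cl_{\tau_i}(A)\subseteq cl_{\tau_0}(A)$ and idempotence of closure, while the paper sandwiches $cl_{\tau_0}(cl_{\tau_i}(\{x_i\}))$ between two copies of $cl_{\tau_0}(\{x_0\})$ --- the same computation written in a different order.
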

	\begin{proof}
		Firstly, $(X, \tau)$ is $T_0$ because $\tau$ is finer than the $T_0$ topologies  $ \tau_{i} (i\in I).$
		
		Let $A\in \mathrm{Irr}_c(X, \tau)$. Then, again, by Remark \ref{basic properties of irr sets} (1)(4),
$cl_{\tau_j}(A)\in \mathrm{Irr}_c(X, \tau_j)$ for each $j\in I\cup\{0\}$ and  $A=\bigcap\{cl_{\tau_i}(A): i\in I\}$, by Proposition \ref{irr set in generated topo}. Since $\tau_j$ is sober for each $j\in I\cup\{0\}$,
		$cl_{\tau_j}(A)=cl_{\tau_j}(\{x_j\})$ for some $x_j\in X$. Now $cl_{\tau_0}(A)=cl_{\tau_0}(\{x_0\})\subseteq cl_{\tau_0}(cl_{\tau_i}(\{x_i\}))\subseteq cl_{\tau_0}(cl_{\tau_0}(\{x_0\}))=cl_{\tau_0}(\{x_0\})$, implying
$cl_{\tau_0}(cl_{\tau_i}(\{x_i\}))=cl_{\tau_0}(\{x_0\})$. By Lemma \ref{irr sets of compa topo}(2),  $x_0=x_i$ for each $i\in I$.
		
		Then $A=\bigcap\{cl_{\tau_i}(\{x_0\}): i\in I\}$.
		Note that the set $\{x_0\}$ is irreducible in $(X, \tau)$, by Lemma \ref{irr set in generated topo}, we also have
		
		$$cl_{\tau}(\{x_0\})=\bigcap\{cl_{\tau_i}(\{x_0\}): i\in I\}.$$
		At last, we have $A=cl_{\tau}(\{x_0\})$. The uniqueness of $x_0$ follows from that $\tau$ is $T_0$.  Therefore, $(X, \tau)$ is sober.	
	\end{proof}

 If $D$ is a directed subset of poset $P$ and $d_0\in D$, then $D_{d_0}=\{d\in D: d_0\le d\}$ is also a directed set and $\bigvee D=\bigvee D_{d_0}$ if $\bigvee D$ exists in $P$.
	
	Let  $\mathcal{B}=\{\tau_i: i\in I\}\subseteq T(X)$ be a directed set consisting of  sober topologies.  Fixed one $\tau_{i_0}$ and let
$\mathcal{B}_{i_0}=\{\tau_i: \tau_{i_0}\subseteq \tau_i\}$. Then by the previous remark, $\mathcal{B}_{i_0}$ is also a directed set and
$\bigvee \mathcal{B}=\bigvee\mathcal{B}_{0}$, here the suprema are taken in $\mathrm{T}(X)$. Since $\mathcal{B}_{0}$ has a bottom element $\tau_{i_0}$,  by Proposition \ref{2.10}, $\bigvee\mathcal{B}_{0}$ is sober. Hence  $\bigvee\mathcal{B}=\bigvee \mathcal{B}_{0}$ is sober.
	
	\begin{corollary}\label{directed join of sober topologies}
		The poset  $(\mathrm{T}_{sob}(X), \subseteq)$ of all sober topologies on a set $X$  is a dcpo.
	\end{corollary}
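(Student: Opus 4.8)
The plan is to show that an arbitrary directed family $\mathcal{B}=\{\tau_i: i\in I\}$ of sober topologies has a least upper bound inside $\mathrm{T}_{sob}(X)$, and that this least upper bound is precisely the join $\tau:=\bigvee\mathcal{B}$ computed in the ambient complete lattice $\mathrm{T}(X)$. Since $\mathrm{T}(X)$ is complete, $\tau$ always exists there, so the only real content is to verify that $\tau$ is again sober. Once that is established, the universal property transfers for free: any sober upper bound of $\mathcal{B}$ is in particular an upper bound of $\mathcal{B}$ in $\mathrm{T}(X)$, hence contains $\tau$, so $\tau$ is the supremum of $\mathcal{B}$ within the subposet $\mathrm{T}_{sob}(X)$ as well. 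This reduces the whole corollary to one soberness check.

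The soberness of $\tau$ is exactly the conclusion of Proposition \ref{2.10}, but that proposition is not directly applicable: it requires a single sober topology $\tau_0$ sitting below every member of the family, whereas a general directed set need not possess a least element. This mismatch is the main obstacle. The way around it is the cofinality observation recorded just before the statement. I would fix an arbitrary index $i_0\in I$ and pass to the subfamily $\mathcal{B}_{i_0}=\{\tau_i\in\mathcal{B}: \tau_{i_0}\subseteq\tau_i\}$. Directedness of $\mathcal{B}$ makes $\mathcal{B}_{i_0}$ cofinal in $\mathcal{B}$, so replacing $\mathcal{B}$ by $\mathcal{B}_{i_0}$ changes neither the set of upper bounds nor the join; hence $\bigvee\mathcal{B}=\bigvee\mathcal{B}_{i_0}$.

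Now $\mathcal{B}_{i_0}$ does have a least element, namely $\tau_{i_0}$, which is sober by hypothesis and is contained in every topology of the subfamily. Thus Proposition \ref{2.10} applies with this $\tau_{i_0}$ playing the role of $\tau_0$, and it yields that $\bigvee\mathcal{B}_{i_0}$ is sober. Combining with the previous paragraph, $\tau=\bigvee\mathcal{B}=\bigvee\mathcal{B}_{i_0}$ is sober, which completes the verification. At the level of this corollary the only subtlety is the cofinality reduction, which is routine once one notices that $\bigvee D=\bigvee D_{d_0}$ for a directed set $D$ and $d_0\in D$; the genuine difficulty has already been absorbed into Proposition \ref{2.10}, whose proof in turn rests on Lemma \ref{irr set in generated topo} and Lemma \ref{irr sets of compa topo}.
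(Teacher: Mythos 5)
Your proposal is correct and follows essentially the same route as the paper: fix $\tau_{i_0}$, pass to the cofinal subfamily $\mathcal{B}_{i_0}=\{\tau_i:\tau_{i_0}\subseteq\tau_i\}$ (which has $\tau_{i_0}$ as bottom element and the same join), and invoke Proposition \ref{2.10}. Your additional remark that the join computed in $\mathrm{T}(X)$ is automatically the supremum within the subposet $\mathrm{T}_{sob}(X)$ makes explicit a point the paper leaves implicit, but the argument is the same.
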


	For any $\tau\in \mathrm{T}(X)$, define
	$$\omega_{sob}(\tau)=min\{|\mathcal{A}|: \mathcal{A}\subseteq T_{sob}(X) \mbox{ and } \tau=\bigvee \mathcal{A}\},$$
	where $|\mathcal{A}|$ is the cardinality of the  set $\mathcal{A}$.
	
	Then $\omega_{sob}(\tau)=1$ if and only if $\tau$ is sober.
	
	By Example \ref{tau 1}, $\omega_{sob}(\tau_{cof})=2$, where $\tau_{cof}$ is the co-finite topology on $\mathbb{N}$.
	
	\begin{lemma}\label{join of sob topologies}
		Let $\{\tau_i: i\in I\}\subseteq T_{sob}(X)$ and $\tau=\bigvee\{\tau_i: i\in I\}.$
		Then
		$$|Irr_{c}(X, \tau)| \le |X|^{|I|}.$$
	\end{lemma}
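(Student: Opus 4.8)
The plan is to mimic the argument of the finite-join Proposition proved earlier in the paper, replacing the finite product by an arbitrary product indexed by $I$. The essential tool is Lemma \ref{irr set in generated topo}, which guarantees that every closed irreducible set in the join topology $\tau$ is recoverable from its family of closures in the individual topologies $\tau_i$, and this recoverability is exactly what forces the desired cardinality bound.

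First I would define the map $\phi : \mathrm{Irr}_c(X, \tau) \to \prod_{i \in I} \mathrm{Irr}_c(X, \tau_i)$ by $\phi(A) = (cl_{\tau_i}(A))_{i \in I}$ and check that it is well defined. For $A \in \mathrm{Irr}_c(X, \tau)$ and each $i \in I$, since $\tau_i \subseteq \tau$, Remark \ref{basic properties of irr sets}(5) gives $A \in \mathrm{Irr}(X, \tau_i)$, and then Remark \ref{basic properties of irr sets}(1) shows $cl_{\tau_i}(A) \in \mathrm{Irr}_c(X, \tau_i)$. Hence each coordinate of $\phi(A)$ indeed lands in the intended factor.

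Next I would establish injectivity of $\phi$, which is the crux of the argument. By Lemma \ref{irr set in generated topo}, every $A \in \mathrm{Irr}_c(X, \tau)$ satisfies $A = \bigcap\{cl_{\tau_i}(A) : i \in I\}$; thus $A$ is completely determined by the tuple $\phi(A)$, so $\phi(A) = \phi(B)$ forces $A = \bigcap\{cl_{\tau_i}(A):i\in I\} = \bigcap\{cl_{\tau_i}(B):i\in I\} = B$. This step is where the irreducibility of $A$ in $\tau$ is genuinely used: the intersection formula can fail for non-irreducible sets, as the paper remarks immediately after that lemma.

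Finally I would read off the cardinality bound. Since each $\tau_i$ is sober, the assignment $x \mapsto cl_{\tau_i}(\{x\})$ is a bijection $X \to \mathrm{Irr}_c(X, \tau_i)$, so $|\mathrm{Irr}_c(X, \tau_i)| = |X|$ for every $i \in I$. Injectivity of $\phi$ then yields $|\mathrm{Irr}_c(X, \tau)| \le \left|\prod_{i \in I} \mathrm{Irr}_c(X, \tau_i)\right| = \prod_{i \in I} |X| = |X|^{|I|}$, where the last equality is cardinal exponentiation. I do not anticipate a serious obstacle; the only points needing care are the well-definedness and the injectivity of $\phi$, both of which rest squarely on Remark \ref{basic properties of irr sets} and Lemma \ref{irr set in generated topo}, and the final inequality is then immediate from the sobriety of each $\tau_i$.
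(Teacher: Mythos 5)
Your proposal is correct and follows essentially the same route as the paper: the same map $\phi(A)=(cl_{\tau_i}(A))_{i\in I}$, injectivity via the intersection formula of Lemma \ref{irr set in generated topo}, and the count $|\mathrm{Irr}_c(X,\tau_i)|=|X|$ from sobriety. You merely spell out the well-definedness and injectivity steps that the paper compresses into a single sentence.
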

	\begin{proof}
		Define the mapping
		$\phi: Irr_c(X, \tau)\longrightarrow \prod_{i\in I} Irr_c(X, \tau_i)$ by
		$$\phi(A)=(cl_{\tau_i}(A))_{i\in I},  A\in Irr_c(X, \tau).$$
		By Lemma \ref{irr set in generated topo}, $\phi$ is well defined and injective.
		For each $i\in I$, since $(X, \tau_i)$ is sober, we have  $Irr_c(X, \tau_i)=\{cl_{\tau_i}\{x\}: x\in X\}$. Thus $|Irr_c(X, \tau_i)|=|X|$.
		Therefore,
		$|Irr_c(X, \tau)|\le | \prod_{i\in I} Irr_c(X, \tau_i)|=|X|^{|I|}$, as desired.
	\end{proof}
	
	Now we give a topology $\tau$ such that $\omega_{sob}(\tau)$ is not finite.
	
	\begin{example}
		Let  $X=\mathbb{Q}\cap [0, 1]$ be the set of all rational numbers in $[0, 1]$.
		Let $\tau$ be the topology on $X$ whose family of closed sets equals
		$$\{F\subset X: F \mbox{ is finite }\}\cup \{([0, x]\cap \mathbb{Q})\cup F: x\in [0, 1], F\subseteq X \mbox{ is finite }\}.$$
		Then we can verify that
		$$Irr_c(X, \tau)=\{\{x\}: x\in X\}\cup \{[0, x]\cap\mathbb{Q}: x\in [0, 1]\}.$$
		Now $| Irr_c(X, \tau)|=|X| + |[0, 1]|=\aleph_1$.
		If $\tau=\bigvee \{\tau_i: i\in H\}$ with $H$ a finite set, then by  Lemma \ref{join of sob topologies},
		$\aleph_1 \le |X|^{|H|}=\aleph_0^{|H|}=\aleph_0$, a contradiction. It follows that  $\omega_{sob}(\tau)\ge \aleph_0$.
		
		In fact, $\Gamma(X,\tau)$ has a subbase $\{\{x\}: x\in X\}\cup\{[0,q]: q\in\mathbb{Q}\}\cup\{\emptyset\}$. And we have $\omega_{sob}(\tau)\le \aleph_0$, by the proof of Theorem \ref{t1 topologies are joins of sober topologies}.
		
		Hence, $\omega_{sob}(\tau)=\aleph_0$.
	\end{example}
	
	\section{Meets of sober topologies}
	In this section, we study the meets of sober topologies in $\mathrm{T}(X)$. The main result is that every  topology is the meet of a collection of sober topologies. Thus, the set $\mathrm{T}_{sob}(X)$ is meet dense in $\mathrm{T}(X)$.
	
	Note that for any $\mathcal{A}\subseteq \mathrm{T}(X)$,  $\inf\mathcal{A}=\bigwedge \mathcal{A}=\bigcap \mathcal{A}$.
	
	First, we show that every topology is the meet of some $T_0$ topologies. This result seems having been known for quite some time (we saw it in some online forum without proof),
 but we could not find a reliable source for it. For reader's convenience, we give a proof  here.
	
	Let $(M, \mu)$ be a topological space and $M\subseteq X$. Let $\tau_{M}$ be the topology on $X$ such that
	$$\Gamma(X, \tau_{M})= \{A\cup B: A\cap M=\emptyset \mbox{ and } B\in \Gamma(M, \mu)\}.$$
	By the definition of $\tau_M$, $(M,\mu)$ is a closed subspace of $(X, \tau_{M})$.
	
	\begin{lemma}\label{topology by subspace}
		With the above notations, we have the following statements.
		\II
		\I[(1)] $(M, \mu)$ is $T_0$ if and only if $(X, \tau_{M})$ is $T_0$.
		\I[(2)] $(M, \mu)$ is $T_1$ if and only if $(X, \tau_{M})$ is $T_1$.
		\I[(3)] $(M, \mu)$ is sober if and only if $(X, \tau_{M})$ is sober.
		\III
	\end{lemma}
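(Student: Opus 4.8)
The plan is to base everything on an explicit description of the closure of singletons in $(X,\tau_M)$, read off directly from the given list of closed sets. For $x\in X\setminus M$ the set $\{x\}=\{x\}\cup\emptyset$ is itself closed, so $cl_{\tau_M}(\{x\})=\{x\}$; while for $x\in M$ every closed set $A\cup B$ (with $A\cap M=\emptyset$ and $B\in\Gamma(M,\mu)$) containing $x$ must contain $x$ inside $B$, whence the smallest such closed set is obtained with $A=\emptyset$ and $B=cl_\mu(\{x\})$, giving $cl_{\tau_M}(\{x\})=cl_\mu(\{x\})$. I would record these two formulas at the outset, together with the already-noted fact that $\mu$ is the subspace topology that $\tau_M$ induces on the closed subspace $M$.

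Parts (1) and (2) then follow quickly. Since $\mu$ is the subspace topology on $M$ and both $T_0$ and $T_1$ are hereditary, $(X,\tau_M)$ being $T_0$ (resp. $T_1$) forces $(M,\mu)$ to be so. For the converse I would invoke the closure formulas: $(X,\tau_M)$ is $T_1$ iff every singleton is $\tau_M$-closed, and the formulas show the only singletons whose closedness is in question are those of points of $M$, where $\{x\}$ is $\tau_M$-closed iff $\{x\}$ is $\mu$-closed; so $M$ being $T_1$ yields $X$ being $T_1$. For $T_0$ I would compare $cl_{\tau_M}$ of two distinct points: two points of $X\setminus M$, or a point of $X\setminus M$ paired with a point of $M$, are automatically separated by the formulas (a singleton lying outside $M$ versus a set contained in $M$), so only pairs inside $M$ matter, and these are separated precisely when $(M,\mu)$ is $T_0$.

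The substance is part (3), for which the key step is to identify $\mathrm{Irr}_c(X,\tau_M)$. I would take a nonempty closed irreducible $C$ and split it canonically as $C=(C\setminus M)\sqcup(C\cap M)$, writing $A:=C\setminus M\subseteq X\setminus M$ and $B:=C\cap M\in\Gamma(M,\mu)$; both $A$ and $B$ are $\tau_M$-closed, since every subset of $X\setminus M$ is closed and every $\mu$-closed subset of $M$ is $\tau_M$-closed. If $A$ and $B$ were both nonempty, then $C=A\sqcup B$ would exhibit $C$ as a union of two proper closed subsets, contradicting irreducibility; and if $B=\emptyset$ with $|A|\ge 2$, splitting $A$ (all of whose subsets are closed) again contradicts irreducibility. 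Hence either $C=\{x\}$ with $x\in X\setminus M$, or $C\subseteq M$. In the latter case, because the subspace topology $C$ inherits from $X$ coincides with the one it inherits through $M$, irreducibility of $C$ in $(X,\tau_M)$ is equivalent to irreducibility of $C$ in $(M,\mu)$. This gives the classification $\mathrm{Irr}_c(X,\tau_M)=\{\{x\}:x\in X\setminus M\}\cup\mathrm{Irr}_c(M,\mu)$, which I expect to be the main obstacle.

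With the classification in hand the sobriety equivalence is routine. Each $\{x\}$ with $x\in X\setminus M$ already equals $cl_{\tau_M}(\{x\})$, and no other point can represent it, since a point of $M$ has closure contained in $M$; so these irreducible closed sets never obstruct sobriety. Thus $(X,\tau_M)$ is sober iff every $C\in\mathrm{Irr}_c(M,\mu)$ is $cl_{\tau_M}(\{x\})$ for a unique $x\in X$. Any representing point must lie in $M$ (its closure is nonempty and contained in $M$), and there $cl_{\tau_M}(\{x\})=cl_\mu(\{x\})$; hence the condition says exactly that every $C\in\mathrm{Irr}_c(M,\mu)$ is the $\mu$-closure of a unique point of $M$, i.e. that $(M,\mu)$ is sober. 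As a sanity check, the forward implication ``$X$ sober $\Rightarrow M$ sober'' also follows from the general fact that a closed subspace of a sober space is sober.
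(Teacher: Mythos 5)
Your proof is correct, but it is not how the paper argues. The paper disposes of the lemma in one line: it observes that $(X,\tau_M)$ is the topological direct sum $(X-M)\oplus M$ of the discrete space $X-M$ and the space $(M,\mu)$, and then simply invokes the standard facts that $T_0$, $T_1$ and sobriety hold for such a sum if and only if they hold for each summand (a discrete summand being harmless for all three properties). Your argument never names this decomposition, but it proves from scratch exactly what the paper leaves implicit: your closure formulas ($cl_{\tau_M}(\{x\})=\{x\}$ for $x\notin M$, $cl_{\tau_M}(\{x\})=cl_\mu(\{x\})$ for $x\in M$) and, more substantially, your classification $\mathrm{Irr}_c(X,\tau_M)=\{\{x\}:x\in X\setminus M\}\cup\mathrm{Irr}_c(M,\mu)$ --- obtained by splitting a closed irreducible $C$ into the two closed pieces $C\setminus M$ and $C\cap M$ and using irreducibility to kill one of them --- is precisely the statement that an irreducible closed subset of a direct sum lies entirely in one summand, and is a singleton when that summand is discrete. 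So the mathematical content coincides; the difference is packaging. The paper's route is shorter and conceptually cleaner, at the price of citing (or trusting) facts about sums of spaces; yours is self-contained, needs no such background, and makes explicit the irreducible-set classification, which is where all the substance of part (3) actually lives.
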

	
  The space $(X, \tau_{M})$ is actually the direct sum $(X-M)\oplus M$ of the discrete space $X-M$ and the space $(M, \mu)$. Hence the three statements hold.
	
	\vskip 0.5cm
Given a topology $\tau\in \mathrm{T}(X)$, let $\thicksim_{\tau}$ be the equivalence  relation on $X$ defined by
	$$x \thicksim_{\tau} y \mbox{ if and only if } cl_{\tau}(\{x\}) =cl_{\tau}(\{y\}).$$

	Let $X/ \thicksim_{\tau} =\{[x_i]: i\in I\}$ be the collection of all distinct equivalence classes\\ determined by $ \thicksim_{\tau}$.
	Let $P=\{f\in X^{I}: f(i)\in [x_i] \mbox{ for each } i\in I\}.$
	
	For each $f\in P$, let $F_{f}: X\longrightarrow X$ be the mapping such that for each $x\in [x_i]_{\thicksim_{\tau}}, F_{f}(x)=f(i).$
	
	For each $f\in P$, let $\tau_{f}$ be the topology on $X$ such that
$$\Gamma(X, \tau)\cup \{F\subseteq X: F \mbox{ is finite and } f(i)\in F\subseteq [x_i]_{\thicksim_{\tau}} \mbox{for some } i\in I\}$$
 is a sub base of the co-topology of $\tau_f$.

\vskip 0.5cm

	Trivially, $\tau\subseteq \tau_{f}$. Also for any $a\in [x_i]_{\thicksim_{\tau}}$, $\{a, f(i)\}$ is the smallest closed set in $(X, \tau_f)$ containing $a$, hence $cl_{\tau_f}(\{a\})=\{a, f(i)\}$ when $a\not=f(i)$, and $cl_{\tau_f}(\{a\})=\{a\}$ if $a=f(i)$ .

	Note  that a topology $\mu\in \mathrm{T}(X, \tau)$ is $T_0$ iff  for any $a, b\in X$, $a\not=b$ implies
	$cl_{\mu}(\{a\})\not=cl_{\mu}(\{b\})$.
	
	\begin{lemma}\label{topology tauf}
		For each $f\in P$, $\tau_{f}$ is a $T_0$ topology.
	\end{lemma}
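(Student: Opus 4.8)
The plan is to invoke the characterization of the $T_0$ separation axiom recalled immediately before the lemma: a topology $\mu$ on $X$ is $T_0$ if and only if any two distinct points have distinct point-closures. Hence it suffices to show that whenever $a,b\in X$ with $a\neq b$, one has $cl_{\tau_f}(\{a\})\neq cl_{\tau_f}(\{b\})$. The crucial input is the explicit description of point-closures already established in the paragraph preceding the lemma: if $a\in[x_i]_{\thicksim_{\tau}}$, then $cl_{\tau_f}(\{a\})=\{a\}$ when $a=f(i)$, and $cl_{\tau_f}(\{a\})=\{a,f(i)\}$ when $a\neq f(i)$. In particular every point-closure in $(X,\tau_f)$ has at most two elements, and this boundedness is exactly what makes the argument short.

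Granting that formula, I would argue by contradiction. Suppose $cl_{\tau_f}(\{a\})=cl_{\tau_f}(\{b\})$ with $a\neq b$, and call this common set $C$. Since $a\in cl_{\tau_f}(\{a\})$ and $b\in cl_{\tau_f}(\{b\})$, both $a$ and $b$ lie in $C$; as $|C|\le 2$ and $a\neq b$, we must have $C=\{a,b\}$. Let $i,j\in I$ be the unique indices with $a\in[x_i]$ and $b\in[x_j]$. From $|C|=2$ and the closure formula, $a\neq f(i)$ and $C=\{a,f(i)\}$, which forces $f(i)=b$; symmetrically $f(j)=a$. But $f(i)=b$ together with $f(i)\in[x_i]$ gives $b\in[x_i]$, so the classes $[x_i]$ and $[x_j]$ share the point $b$ and therefore coincide. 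Since the $[x_i]$ are pairwise distinct, this yields $i=j$ and hence $f(i)=f(j)$, whence $b=f(i)=f(j)=a$, contradicting $a\neq b$. This proves $\tau_f$ is $T_0$.

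The entire argument rests on the closure formula, so the only genuine work lies in confirming it, namely that $\{a,f(i)\}$ (respectively $\{a\}$) really is the smallest $\tau_f$-closed set containing $a$. This is precisely the observation recorded before the lemma, and it comes down to noting that every subbasic closed set — whether a $\tau$-closed set or a finite set $F$ with $f(i)\in F\subseteq[x_i]$ — that contains a point $a\in[x_i]$ must also contain $f(i)$ (for $\tau$-closed sets because $a\thicksim_{\tau}f(i)$ implies $cl_{\tau}(\{a\})=cl_{\tau}(\{f(i)\})$, and for the finite subbasic sets by their very shape), a property plainly preserved under the finite unions and arbitrary intersections that generate the co-topology. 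With that in hand there is no further obstacle; one could equally bypass the size-two remark with a direct four-case check according to whether $a=f(i)$ and whether $b=f(j)$, arriving at the same conclusion.
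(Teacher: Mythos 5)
Your proposal is correct and rests on exactly the same two ingredients as the paper's proof: the point-closure characterization of $T_0$ and the formula $cl_{\tau_f}(\{a\})=\{a,f(i)\}$ (or $\{a\}$) stated just before the lemma. The only differences are organizational --- the paper splits into the cases $a\thicksim_{\tau}b$ and $a\not\thicksim_{\tau}b$ (handling the latter via $cl_{\tau_f}(\{\cdot\})\subseteq cl_{\tau}(\{\cdot\})$), whereas you run a single contradiction argument through the closure formula and the disjointness of the equivalence classes, and you additionally sketch a verification of that closure formula, which the paper asserts without proof.
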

	\begin{proof}
		Let $a, b\in X$ with $a\not=b$.
		
		Case 1: $a\thicksim_{\tau} b$.
		
		Then  $\{a, b\}\subseteq [x_{i}]_{\thicksim_{\tau}} $ for some $i\in I$.
		
		If  $a=f(i)$ (or $b=f(i)$), then $cl_{\tau_{f}}(\{a\})=\{a\}~~  (cl_{\tau_f}(\{b\})=\{b\}$, resp.), thus $cl_{\tau_f}(\{a\})\not=cl_{\tau_f}(\{b\}).$
		
		If  $a\not=f(i)$ and $b\not=f(i)$, then one can verify that $cl_{\tau_f}(\{a\}) = \{f(i), a\}$, and as $b\not\in \{f(i), a\}$, again  $cl_{\tau_f}(\{a\})\not=cl_{\tau_f}(\{b\})$.
		
		Case 2: $a\thicksim_{\tau} b$ does not hold.
		
		Then either $a\not\in cl_{\tau}(\{b\})$ or $b\not\in cl_{\tau}(\{a\})$.
		As $cl_{\tau_{f}}(\{b\})\subseteq cl_{\tau}(\{b\})$, $cl_{\tau_{f}}(\{a\})\subseteq cl_{\tau}(\{a\})$, we have that
		$a\not\in cl_{\tau_{f}}(\{b\})$ or $b\not\in cl_{\tau_{f}}(\{a\})$, implying  $cl_{\tau_{f}}(\{b\})\not= cl_{\tau_{f}}(\{b\})$.
		
		All these together show that $(X, \tau_{f})$ is $T_0$.
	\end{proof}

	\begin{remark}\label{titlde operator}
		For each subset $A$ of a space $(X, \tau)$, let $\tilde{A}=\bigcup\{cl_{\tau}(\{x\}): x\in A\}$.
		Then
		\II
		\I[(1)] $A\subseteq \tilde{A}\subseteq cl_{\tau}(A)$;
		\I[(2)] $\tilde{(A\cup B)}=\tilde{A}\cup \tilde{B}$ for any $A, B\subseteq X$;
		\I[(3)] $\tilde{\tilde{A}}=\tilde{A}$.
		\III
		Hence $A\rightarrow \tilde{A}$ defines a closure operator on $\mathcal{P}(X)$.
		
	\end{remark}
	
	\begin{lemma}\label{every topology is the meets of t0 topo}
		For any $\tau\in \mathrm{T}(X)$, there is a collection $\mathcal{A}\subseteq \mathrm{T}(X)$ of $T_0$ topologies such that
		$$\tau =\bigcap \mathcal{A}.$$
	\end{lemma}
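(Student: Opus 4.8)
The plan is to realize $\tau$ as the meet of a family of $T_0$ refinements, one for each way of linearly ordering the points inside each $\thicksim_{\tau}$-class. Write $X/\thicksim_{\tau}=\{[x_i]:i\in I\}$ as above, and call a family $L=(\prec_i)_{i\in I}$ a \emph{choice of orders} if each $\prec_i$ is a linear order on the class $[x_i]$. For $i\in I$ and $y\in[x_i]$ put $S^{L}_{i,y}=\{z\in[x_i]:z\prec_i y\}$ and $O^{L}_{i,y}=X\setminus S^{L}_{i,y}$, and let $\mu_L$ be the topology generated by the subbase
$$\tau\cup\{O^{L}_{i,y}:\ i\in I,\ y\in[x_i]\}.$$
I would take $\mathcal{A}=\{\mu_L: L\text{ a choice of orders}\}$ and prove $\tau=\bigcap\mathcal{A}$.

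First I would dispose of the routine facts. Since each $O^{L}_{i,y}$ is subbasic and $\tau$ lies in the subbase, $\tau\subseteq\mu_L$, so $\tau\subseteq\bigcap\mathcal{A}$. Each $\mu_L$ is $T_0$: two points in distinct classes are separated already by a member of $\tau$, while if $y\prec_i y'$ lie in one class then the subbasic open set $O^{L}_{i,y'}$ contains $y'$ but not $y$. A short computation shows moreover that the specialization order of $\mu_L$ restricted to $[x_i]$ is precisely $\prec_i$; this is the fact I will use in the converse direction.

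The substance is the inclusion $\bigcap\mathcal{A}\subseteq\tau$, which I would establish in two steps. Let $W\in\bigcap\mathcal{A}$. Because an open set is an up-set for the specialization order, $W\cap[x_i]$ is a $\prec_i$-up-set for every choice $L$; as the only subsets of $[x_i]$ that are up-sets for \emph{all} linear orders are $\emptyset$ and $[x_i]$, each $W\cap[x_i]$ is trivial, i.e.\ $W$ is a union of classes. Now fix a single $L$. For $w\in W$ pick a basic $\mu_L$-neighbourhood $U\cap O^{L}_{i_1,y_1}\cap\cdots\cap O^{L}_{i_n,y_n}\subseteq W$ with $U\in\tau$ and $w\in U$; equivalently $U\cap W^{c}\subseteq\bigcup_{k}S^{L}_{i_k,y_k}$. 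For any class $[x_j]\subseteq W^{c}$ the right-hand side meets $[x_j]$ in a \emph{proper} initial segment of $\prec_j$, whereas $U$, being $\tau$-open, is itself a union of classes (indistinguishable points share all open sets), so $U\cap[x_j]\in\{\emptyset,[x_j]\}$; these two facts force $U\cap[x_j]=\emptyset$. Since $W^{c}$ is a union of such classes, $U\cap W^{c}=\emptyset$, that is $U\subseteq W$ and $w\in\mathrm{int}_{\tau}W$. As $w\in W$ was arbitrary, $W\in\tau$.

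The main obstacle, and the reason for ordering each class rather than merely marking a representative, is exactly this second step, namely preventing the meet from overshooting $\tau$. The argument rests on the two structural facts that genuine $\tau$-open sets are unions of classes and that passing to $\mu_L$ removes only proper within-class down-sets $S^{L}_{i,y}$. A coarser design that declared an entire finite class closed would add the complement of that class as an open set common to every refinement, so the meet would be strictly larger than $\tau$; the initial-segment construction is engineered precisely to avoid this. I therefore expect the point needing care to be the verification that the finitely many initial segments occurring in a basic neighbourhood can never cover $U\cap W^{c}$ unless $U$ already avoids $W^{c}$, whereas the $T_0$-ness and the union-of-classes reduction are routine.
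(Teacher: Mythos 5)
Your proposal is correct, and it takes a genuinely different route from the paper's. The paper proves the lemma by considering the meet $\lambda$ of \emph{all} $T_0$ refinements of $\tau$ and showing, for each $A\notin\Gamma(X,\tau)$, that some $T_0$ refinement fails to have $A$ closed; this forces a case split on whether $A=\tilde{A}=\bigcup\{cl_{\tau}(\{x\}):x\in A\}$ and uses two distinct constructions: the topologies $\tau_f$ (whose co-topology adjoins the finite sets $F$ with $f(i)\in F\subseteq[x_i]$ for a choice function $f$ of class representatives, so that each class gets a bottom point $f(i)$ with $cl_{\tau_f}(\{a\})=\{a,f(i)\}$) for the case $A\neq\tilde{A}$, and the direct-sum topologies $\tau_M$ (discrete outside a transversal $M$, with $M$ a closed subspace), via Lemma~\ref{topology by subspace}, for the case $A=\tilde{A}$. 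You instead build one uniform family $\{\mu_L\}$ indexed by linear orderings of the classes and argue on open sets: any $W$ open in every $\mu_L$ is a union of classes (a proper nonempty subset of a class fails to be an up-set for some linear order), and then the finitely many \emph{proper} initial segments occurring in a basic $\mu_L$-neighbourhood can never cover a class contained in $W^{c}$, forcing the $\tau$-open part $U$ of that neighbourhood to lie inside $W$; I checked the supporting computation that the specialization order of $\mu_L$ on $[x_i]$ is exactly $\preceq_i$, and it is sound. Your linear-order trick is precisely what lets a single family do the whole job, avoiding the $\tilde{A}$-dichotomy and the auxiliary direct-sum lemma, so your argument is more self-contained. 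What the paper's heavier route buys is reuse: the dichotomy $A\neq\tilde{A}$ versus $A=\tilde{A}$, and the constructions $\tau_f$, $\tau_M$, are exactly the machinery redeployed in Lemma~\ref{topology tau} to prove the harder statement that every $T_0$ topology is a meet of \emph{sober} topologies, whereas your $\mu_L$ are not adaptable there (e.g.\ for $T_1$ $\tau$ all classes are singletons and every $\mu_L=\tau$, which need not be sober). Both arguments use the axiom of choice, yours through well-ordering each class, the paper's through choice functions.
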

	\begin{proof}
		
		As before, let  $X/ \thicksim_{\tau} =\{[x_i]_{\thicksim_{\tau} }: i\in I\}$ and $P=\{f\in X^{I}: f(i)\in [x_i] \mbox{ for each } i\in I\}.$
		
		Let $\lambda=\bigcap\{\mu\in \mathrm{T}(X): \tau\subseteq \mu,~\mu \mbox{ is } T_0\}$.
		Then $\tau\subseteq \lambda$.
		
		Assume that $A\subseteq X$ with $A\not\in \Gamma(X,\tau)$, we show that $A\not\in \Gamma(X, \lambda)$.
		
		Case 1: $A\not=\tilde{A}=\bigcup\{cl_{\tau}(\{x\}): x\in A\}.$
		
		Then there exist $a\in A$ and $y\in cl_{\tau}(\{a\}) - A$, thus  $y \thicksim_{\tau} a$ and so $y, a\in [x_i]_{\thicksim_{\tau}}$ for some $i\in I$.
		
		Choose an $f\in P$ such that $y=f(i)$.  Then  $cl_{\tau_f}(\{a\})=\{a, f(i)\}=\{a, y\}$.  It follows that $A\not\in \Gamma(X, \tau_f)$, otherwise $y\in cl_{\tau_f}(\{a\})\subseteq cl_{\tau_f}(A)=A$, contradicting $y\not\in A$.

Note that  $\tau_{f}$ is $T_0$ and  $\tau\subseteq \tau_f$, thus $\lambda \subseteq \tau_{f}$.  Hence, $A\not\in \Gamma(X, \lambda)$.

\vskip 0.2cm
		Case 2: $A=\tilde{A}.$
		
		Then for any $x\in A$, $y\thicksim_{\tau}x$ implies $y\in A$.
		
		Since $A\not\in \Gamma(X, \tau)$, there exists  $b\in cl_{\tau}(A) - A$.
		
		Choose any $f\in P$ and let $M =F_{f}(X)=\{f(x): x\in X\}$. Take $(M, \nu)$ be the subspace of $(X, \tau)$ with underlying set $M$.
		Then, as different element of $M$ are not $\thicksim_{\tau}$ equivalent, $(M, \nu)$ is $T_0$, thus $(X, \tau_{M})$ is $T_0$ by
		Lemma \ref{topology by subspace}.
		
		For any $F\in \Gamma(X, \tau)$, $F=(F-M)\cup (F\cap M)$, and $F-M\subseteq X-M, F\cap M\in \Gamma(M, \nu)$, hence,
		$F\in\Gamma(X, \tau_{M})$. It follows that $\tau\subseteq \tau_{M}$.
		
		We now show that $A\not\in \Gamma(X, \tau_M)$. Assume, on the contrary, that  $A\in \Gamma(X, \tau_M)$. Then
		$A=C\cup (E\cap M)$, where $C\cap M=\emptyset$ and $E\in \Gamma(X, \tau)$.
		
		If $F_f(b)\in E\cap M\subseteq A$, then $b\in cl_{\tau}(\{F_{f}(b)\})\in \tilde{A}=A$,
which  contradicts  $b\in cl_{\tau}(A) - A$. Hence, $F_{f}(b)\not\in E\cap M$.
		
		Also note that $F_f(b)=f(i)\in M$, where $b\thicksim_{\tau} f(i)$.   Thus $F_{f}(b)\in  X-E \in \tau$. Since $F_f(b)\in cl_{\tau}(\{F_{f}(b)\})= cl_{\tau}(\{b\})$, then $b\in X-E$.  Hence, $A\cap (X-E)\not=\emptyset$ because $b\in cl_{\tau}(A)\cap (X-E)$ and $X-E\in \tau$.

	Choose  $a\in A\cap (X-E)$. Then, as $cl_{\tau}(\{a\})= cl_{\tau}(\{F_f(a)\})$, we have that $F_f(a)\in X-E$ and $F_{f}(a)\in A$. However, $F_f(a)\in M$ and $F_f(a)\in A =C\cup (E\cap M)$ (note that $C\subseteq X-M$). So $F_{f}(a)\in E\cap M$, implying  $F_{f}(a)\in E$. This contradicts  $F_f(a)\in X-E$.
		
		Hence $A\not\in \Gamma(X, \tau_{M})$.
		
		In summary, for any $A\not\in \Gamma(X, \tau)$, there is a $T_0$ topology $\mu \in \mathrm{T}(X)$ such that $\tau\subseteq \mu$ and $A\not\in\Gamma(X, \mu)$.
		
		Hence, $\lambda=\bigcap\{\mu\in \mathrm{T}(X): \tau\subseteq \mu,~\mu \mbox{ is } T_0\}\subseteq \tau.$
		At last, we have
$$\bigcap\{\mu\in \mathrm{T}(X): \tau\subseteq \mu,~\mu \mbox{ is } T_0\}=\tau.$$
	\end{proof}
	
	\begin{lemma}\label{topology tau}
		Let $(X, \tau)$ be a $T_0$ space. Then there is a collection $\mathcal{A}$ of sober topologies on $X$ such that
		$$\tau=\bigcap \mathcal{A}.$$
	\end{lemma}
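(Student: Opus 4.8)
The plan is to reduce the lemma to a one-set-at-a-time statement and then to give, for each non-closed set, a direct construction of a finer sober topology that keeps that set non-closed. Concretely, I would set $\mathcal A=\{\mu_A : A\subseteq X,\ A\notin\Gamma(X,\tau)\}$, where each $\mu_A$ is a sober topology with $\tau\subseteq\mu_A$ and $A\notin\Gamma(X,\mu_A)$. Since $\tau\subseteq\mu_A$ for every $A$, we get $\tau\subseteq\bigcap\mathcal A$; conversely, if some $U\notin\tau$ were to lie in $\bigcap\mathcal A$, then $F=X\setminus U\notin\Gamma(X,\tau)$, yet $F$ would be closed in every member of $\mathcal A$, contradicting $F\notin\Gamma(X,\mu_F)$. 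Hence $\bigcap\mathcal A=\tau$ (and if there is no non-closed $A$, then $\tau$ is discrete, hence sober, and one takes $\mathcal A=\{\tau\}$). So everything reduces to the construction described next.

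Fix a non-closed $A$ and choose $b\in cl_\tau(A)\setminus A$, which exists because $A\subsetneq cl_\tau(A)$. I would define $\mu=\mu_A$ by declaring its closed sets to be
$$\Gamma(X,\mu)=\{F\subseteq X: b\in F\}\cup\{F\subseteq X: b\notin cl_\tau(F)\}.$$
One first checks that this family is closed under arbitrary intersections and finite unions, so that it is genuinely the co-topology of a topology $\mu$, and that $\tau\subseteq\mu$ (a $\tau$-closed $F$ with $b\notin F$ satisfies $cl_\tau(F)=F\not\ni b$). Intuitively, $\mu$ is obtained from $\tau$ by making every point other than $b$ isolated while leaving the neighbourhood filter of $b$ untouched. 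To see that $A$ stays non-closed, note that any $\mu$-closed $F\supseteq A$ has $cl_\tau(F)\supseteq cl_\tau(A)\ni b$, so $F$ cannot be of the second type and must contain $b$; thus $b\in cl_\mu(A)\setminus A$ and $A\notin\Gamma(X,\mu)$.

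The main point, and the step I expect to require the most care, is that $\mu$ is sober. The key observation is that for every $x\neq b$ the set $X\setminus\{x\}$ contains $b$ and is therefore $\mu$-closed, so $\{x\}$ is $\mu$-open; that is, every point other than $b$ is isolated. Consequently, by Remark \ref{basic properties of irr sets}(2), any $\mu$-closed set containing two distinct points different from $b$ is reducible, since the two corresponding singletons are disjoint open sets each meeting it. Hence every irreducible $\mu$-closed set is contained in some $\{x,b\}$, and a short case analysis identifies the irreducible closed sets exactly as the point closures: $cl_\mu(\{b\})=\{b\}$; $cl_\mu(\{x\})=\{x\}$ when $b\notin cl_\tau(\{x\})$; and $cl_\mu(\{x\})=\{x,b\}$ when $b\in cl_\tau(\{x\})$, the set $\{x,b\}$ being reducible precisely in the remaining case $b\notin cl_\tau(\{x\})$. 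Since $\mu\supseteq\tau$ is $T_0$, the generic point is unique, so $\mu$ is sober. Assembling the topologies $\mu_A$ as above then yields $\tau=\bigcap\mathcal A$ with every $\mu_A$ sober, completing the proof.
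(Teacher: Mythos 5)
Your proof is correct, and its core construction is genuinely different from (and simpler than) the paper's. The outer reduction is identical in both: it suffices to produce, for each $A\notin\Gamma(X,\tau)$, a sober topology $\mu\supseteq\tau$ with $A\notin\Gamma(X,\mu)$, and then intersect. The paper produces $\mu$ by a two-case argument governed by whether $A=\tilde{A}=\bigcup\{cl_\tau(\{x\}):x\in A\}$: when $A\not=\tilde{A}$ it glues a two-point Sierpinski subspace $\{a,b\}$ onto a discrete complement via Lemma \ref{topology by subspace}; when $A=\tilde{A}$ it builds a $T_1$ sober topology $\lambda_M$ on $M=A\cup\{b\}$ from the subbase of closed sets $\{(C\cap M)-\tilde{B}: C\in\Gamma(X,\tau),\ B\subseteq A\}$ and again sums with a discrete outside --- by far the longest verification in that section. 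Your single topology, with closed sets $\{F: b\in F\}\cup\{F: b\notin cl_\tau(F)\}$ for an arbitrary $b\in cl_\tau(A)\setminus A$, handles both cases uniformly: the family is indeed a co-topology (second-type sets are preserved by intersecting with anything and by finite unions with each other), it contains $\Gamma(X,\tau)$, any $\mu$-closed superset of $A$ must contain $b$ (so $b\in cl_\mu(A)\setminus A$), and your inventory of irreducible closed sets --- $\{b\}$; $\{x\}$ when $b\notin cl_\tau(\{x\})$; $\{x,b\}$ when $b\in cl_\tau(\{x\})$ --- is exactly right, so sobriety is immediate from the observation that every point other than $b$ is isolated. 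Your route buys brevity and a transparent picture of the refinement (isolate everything except $b$, keep $b$'s neighbourhood filter), and nothing downstream is lost: the corollary after Theorem \ref{sob meet dense} on $T_1$ topologies only needs that topologies finer than a $T_1$ topology are $T_1$, and in fact your $\mu$ is itself $T_1$ and sober whenever $\tau$ is $T_1$. The only thing the paper's longer route provides that yours does not is the incidental structural fact that, in the case $A=\tilde{A}$, the refinement restricts to a $T_1$ sober topology on $A\cup\{b\}$; this is not used elsewhere in the paper.
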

	\begin{proof}
	It is enough to prove that $\nu=\tau$, where  $\nu=\bigcap\{\mu\in T_{sob}(X): \tau\subseteq \mu\}$.
		
	As the discrete topology on $X$ is sober and contains $\tau$,  the family $ \{\lambda\in T_{sob}(X): \tau\subseteq \lambda\}$ is nonempty and clearly $\tau\subseteq \nu$.
		
  It remains to show that if $A\not\in \Gamma(X, \tau)$, then there is a sober topology $\mu$  containing $\tau$ and $A\not\in\Gamma(X, \mu)$.
		
		Case 1: $A\not=\tilde{A}.$
		
		Then there exists $b\in  \bigcup\{cl_{\tau}(\{x\}): x\in A\} - A$, that is, $b\in cl_{\tau}(\{a\}) - A$ for some $a\in A$.
		Let $M=\{a, b\}$ with the subspace topology $\tau_{a, b}$, which equals $\{\emptyset, \{a, b\}, \{a\}\}$. Then clearly $(M, \tau_{a, b})$  is sober (it is homeomorphic to the Sierpinski space).
		By Lemma \ref{topology by subspace}, $(X, \tau_{M})$ is sober. Also, similar to the conclusion in the case 2 of  the proof of  Lemma \ref{every topology is the meets of t0 topo}, $\tau\subseteq \tau_M$.  If $A\in \Gamma(X, \tau_{M})$, then $A=C\cup (D\cap M)$ where $C\subseteq X-M, D\in\Gamma(X, \tau)$. As $a\in M$,  $a\not\in C$, we have $a\in D$. Then $b\in cl_{\tau}(\{a\})\subseteq D\subseteq A$, which contradicts the assumption that $b\not\in A$.
		
		Hence $A\not\in \Gamma (X, \tau_M)$.
		\vskip 0.2cm

		Case 2: $A=\tilde{A}.$
		
		Since $A\not=cl_{\tau}(A)$, there exists $b\in cl_{\tau}(A) - A$.  Let $M=A\cup \{b\}$ and $\lambda_{M}$ be the topology on $M$ such that $\{(C\cap M )-\tilde{B}: C\in \Gamma(X, \tau), B\subseteq A\}$ is a subbase of  $\Gamma(M, \lambda_{M})$.
		
		(i) $(M, \lambda_{M})$ is sober.
		
		For any $z\in M$, as $(X, \tau)$ is $T_0$, $\{z\}=cl_{\tau}(\{z\}) - \tilde{B_z}=(cl_{\tau}(\{z\})\cap M) - \tilde{B_z} $, where $B_z=cl_{\tau}(\{z\}) -\{z\}$. In addition  $b\not\in B_z$ (if $z=b$, then clearly $b\not\in B_z$; if $b\not=z$, then $z\in A$, so $B_z\subseteq cl_{\tau}(\{z\})\subseteq \tilde{A}=A$, then $b\not\in B_z$ as $b\not\in A$).
		
		By the definition of $\lambda_{M}$, if $z\not=b$, then $z\in A$,  $(cl_{\tau}(\{z\})\cap M) - \tilde{B_z}\in\Gamma(M, \lambda_{M})$, hence $\{z\}$ is closed in $(M, \lambda_{M})$. For $b\in M$, $\{b\}=(cl_{\tau}(\{b\})\cap M) - A=(cl_{\tau}(\{b\})\cap M) - \tilde{ A}$ is also closed in $(M, \lambda_M)$. It follows that $(M, \lambda_{M})$ is $T_1$.
		
		We now show that $(M, \lambda_{M})$ is sober. Let $D\in \Gamma(M, \lambda_M)$ and $D$ contains two distinct elements $a, c$.
        If one of $a$ and $c$ equals $b$, say  $a=b$, then $c\in A$ and $a\not\in cl_{\tau}(\{c\})$. Otherwise both $a$ and $c$ are in $A$ and  we can assume that $a\not\in cl_{\tau}(c)$.

        Let
		$$D =\bigcap_{j\in J}( \bigcup_{k\in K(j)}(C_k-\tilde{B_k})),$$
		where $C_{k}$ is a closed subset in the subspace $M$ of $(X, \tau)$, $K(j)$ is a finite set and $B_k\subseteq A$.

		Then
		$$D \subseteq (D - cl_{\tau}(\{c\}))\cup (cl_{\tau}(\{c\})\cap M).$$
		Also $D - cl_{\tau}(\{c\})=\bigcap_{j\in J}( \bigcup_{k\in K(j)}(C_k-\tilde{(B_k\cup \{c\})}))\in\Gamma(M, \lambda_M)$ (note that $B_k\cup \{c\}\subseteq A$ as we assume $c\in A$) and
		$cl_{\tau}(\{c\})\cap M\in \Gamma(M, \lambda_M)$. But $D\not\subseteq D - cl_{\tau}(\{c\})$ and $D\not\subseteq cl_{\tau}(\{c\})\cap M $ (note $a\in D$ and $a\not\in cl_{\tau}(\{c\}))$. Thus $D$ is not irreducible.
		So every irreducible closed set of $(M, \tau_M)$ is a singleton, hence $(M, \lambda_M)$  is sober.

By Lemma \ref{topology by subspace}, the space $(X, \tau_M)$ is also sober.
		
		For any $F\in \Gamma(X, \tau)$, $F=(F\cap M)\cup (F-M) \in \Gamma(X, \tau_{M})$, thus $\Gamma(X, \tau)\subseteq \Gamma(X, \tau_M)$.

		We now show that $A\not\in \Gamma(X, \tau_M)$.
		
		For any  $D\in \Gamma(X, \tau_M)$,  $D=E\cup F$ for some  $E\in\Gamma(M, \tau_M)$ and $F\subseteq X-M$. We show that $A\not=D$ by considering the following cases:
		
		(i)  $F\not=\emptyset$.
		
		Then $D\not\subseteq M$, so $D\not=A$ because $A\subseteq M$.
		
		(ii) $F=\emptyset$ and $b\in D$.
		
		Then, as $b\not\in A$,  $D\not=A$.
		
		(iii) $F=\emptyset$ and $b\not\in D$.
		
		Assume that $D =\bigcap_{j\in J}( \bigcup_{k\in K(j)}(C_k-\tilde{B_k}))$, where $C_{k}$ is a closed subset in the subspace $M$ of $(X, \tau)$, $B_k \subseteq A$ (thus $\tilde{B_k}\subseteq A$)  and $K(j)$ is a finite set.
		
		Then, as $b\not\in D$, there is a $j_0\in J$ such that $b\not\in \bigcup_{k\in K(j_0)}(C_k - \tilde{B_k})$.

        Since  $b\not\in \tilde{B_k}$ for each $k$, $b\not\in \bigcup_{k\in K(j_0)}C_k$, thus $\bigcup_{k\in K(j_0)}C_k\subseteq A$.

		We now show that  $\bigcup_{k\in K(j_0)}C_k=A$ does not hold. If not, let $C_k= G_k\cap M$ with $G_k\in \Gamma(X, \tau)$ for each $k\in K(j_0)$.
		Then $X-\bigcup_{k\in K(j_0)}G_k\in \tau$ and $b\in X-\bigcup_{k\in K(j_0)}G_k$. This contradicts $b\in cl_{\tau}(A)$ and
		$A\cap (X-\bigcup_{k\in K(j_0)}G_k) \subseteq A\cap (X-  \bigcup_{k\in K(j_0)}C_k)=A\cap (X - A)=\emptyset$.
        Therefore,  $\bigcup_{k\in K(j_0)}C_k\not=A$. Then, as $D\subseteq \bigcup_{k\in K(j_0)}C_k\subsetneq A$, we have  $D\not=A$.
		\vskip 0.2cm

		In summary, for any $A\not\in \Gamma(X, \tau)$, there is a sober topology $\mu\in \mathrm{T}(X)$ such that $\tau\subseteq \mu$ and $A\not\in\Gamma(X, \mu)$.
		
		Hence, $\tau=\bigcap\{\mu\in T_{sob}(X): \tau\subseteq \mu\}$, as desired.
		The proof is completed.
	\end{proof}
	
	The combination of Lemma \ref{every topology is the meets of t0 topo} and Lemma \ref{topology tau} deduces the main result in this section.
	
	\begin{theorem}\label{sob meet dense}
		Every topology on a set $X$ is the meet of some sober topologies on $X$.
	\end{theorem}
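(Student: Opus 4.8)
The plan is to derive Theorem~\ref{sob meet dense} as an immediate consequence of the two lemmas just established, by composing them in two stages. First I would invoke Lemma~\ref{every topology is the meets of t0 topo}: given an arbitrary $\tau \in \mathrm{T}(X)$, there is a collection $\mathcal{A} \subseteq \mathrm{T}(X)$ of $T_0$ topologies with $\tau = \bigcap \mathcal{A}$. This reduces the general case to the $T_0$ case, for which sober refinements are already available.

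Next I would apply Lemma~\ref{topology tau} to each member of $\mathcal{A}$ separately. For every $T_0$ topology $\mu \in \mathcal{A}$, that lemma supplies a collection $\mathcal{B}_\mu \subseteq \mathrm{T}_{sob}(X)$ of sober topologies with $\mu = \bigcap \mathcal{B}_\mu$. Setting $\mathcal{C} = \bigcup_{\mu \in \mathcal{A}} \mathcal{B}_\mu$, which is again a collection of sober topologies, I would compute
$$\tau = \bigcap_{\mu \in \mathcal{A}} \mu = \bigcap_{\mu \in \mathcal{A}} \Bigl(\bigcap \mathcal{B}_\mu\Bigr) = \bigcap \mathcal{C},$$
where the middle equality uses that the meet in $\mathrm{T}(X)$ is plain set-intersection (recall $\bigwedge \mathcal{A} = \bigcap \mathcal{A}$), so that an iterated intersection collapses to a single intersection over the union $\mathcal{C}$. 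This exhibits $\tau$ as the meet of the sober topologies in $\mathcal{C}$, completing the argument.

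There is no genuine obstacle remaining at this level: all the substantive work—constructing the $T_0$ refinements and the sober refinements and verifying the relevant closed sets are excluded—has already been discharged inside Lemmas~\ref{every topology is the meets of t0 topo} and~\ref{topology tau}. The only point warranting a word of care is the associativity/flattening of nested meets, which is justified precisely because arbitrary meets in this complete lattice are computed as intersections; once that is noted the theorem follows formally. Consequently $\mathrm{T}_{sob}(X)$ is meet-dense in $\mathrm{T}(X)$, as claimed in the section introduction.
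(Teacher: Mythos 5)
Your proposal is correct and is essentially the paper's own proof: the paper derives Theorem~\ref{sob meet dense} precisely by combining Lemma~\ref{every topology is the meets of t0 topo} with Lemma~\ref{topology tau}, and your flattening of the nested intersections (valid since meets in $\mathrm{T}(X)$ are set intersections) is the routine step the paper leaves implicit.
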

	
\vskip 0.3cm
	Thus $\mathrm{T}_{sob}(X)$ is meet dense in $\mathrm{T}(X)$.
	
	The following result follows from Theorem \ref{sob meet dense} straightforwardly.
	
	\begin{corollary}
		A topology $\tau\in \mathrm{T}(X)$ is $T_1$ if and only if it is  the meet of some $T_1$ sober topologies.
	\end{corollary}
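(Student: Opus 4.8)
The plan is to prove the two implications separately, invoking the machinery already assembled and noting that soberness is needed in only one direction.

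For the ``if'' direction I would suppose $\tau = \bigcap \mathcal{A}$ for some family $\mathcal{A}\subseteq \mathrm{T}(X)$ of $T_1$ sober topologies. Since the meet in $\mathrm{T}(X)$ is the intersection, a set is $\tau$-closed exactly when it is closed in every member of $\mathcal{A}$, so
$$\Gamma(X, \tau) = \bigcap\{\Gamma(X, \mu): \mu\in\mathcal{A}\}.$$
Each $\mu\in\mathcal{A}$ is $T_1$, hence every singleton $\{x\}$ belongs to $\Gamma(X,\mu)$; therefore every singleton belongs to $\Gamma(X,\tau)$, which shows $\tau$ is $T_1$. Notice that the soberness of the members of $\mathcal{A}$ is not used here at all — this direction only needs that the meet of $T_1$ topologies is $T_1$.

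For the ``only if'' direction I would suppose $\tau$ is $T_1$, hence in particular $T_0$, and apply Lemma \ref{topology tau} to the $T_0$ space $(X,\tau)$ to obtain
$$\tau = \bigcap\{\mu\in \mathrm{T}_{sob}(X): \tau\subseteq \mu\}.$$
The only thing left to check is that every $\mu$ appearing in this meet is $T_1$: since $\tau\subseteq\mu$ and $\tau$ is $T_1$, each singleton is $\tau$-closed and hence $\mu$-closed, so $\mu$ is $T_1$ (the ``finer than $T_1$ is $T_1$'' observation already used in Corollary \ref{sober t1 spaces}). Thus each $\mu$ is simultaneously $T_1$ and sober, and $\tau$ is their meet.

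Both directions are routine consequences of the earlier results, so there is no genuine obstacle. The one point worth emphasizing is that the argument should cite Lemma \ref{topology tau} rather than merely Theorem \ref{sob meet dense}: it is the former that guarantees the sober topologies used actually \emph{contain} $\tau$, which is precisely what forces them to inherit the $T_1$ property.
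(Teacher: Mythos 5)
Your proof is correct and is essentially the paper's own argument: the paper deduces this corollary directly from Theorem \ref{sob meet dense}, whose proof is exactly the combination of Lemma \ref{every topology is the meets of t0 topo} and Lemma \ref{topology tau} that you invoke. One small correction to your closing emphasis: citing Theorem \ref{sob meet dense} alone does suffice, because whenever $\tau=\bigcap\mathcal{A}$ in $\mathrm{T}(X)$ every $\mu\in\mathcal{A}$ automatically contains the intersection $\tau$ and hence inherits the $T_1$ property, so no appeal to the specific family produced by Lemma \ref{topology tau} is needed for that point.
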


Proposition \ref{directed join of sober topologies} says that the join of a directed family of sober topologies on a set is also sober. It is then nature to ask whether the meet of a filtered family of  sober topologies is sober. The following example shows that the  answer is no, even each topology is $T_2$.

To verify the counterexample below, we need to make use of  the Chinese Remainder Theorem shown below.
Let $\mathbb{N}_0$ be the set of all nonnegative integers and $\mathbb{N}$ the set of all positive integers. For each $n, x\in \mathbb{N}_0$, let
$$[x]_{n}=\{y\in \mathbb{N}_0: y\equiv x (mod~ n) \}=\{kn+x: k\in\mathbb{N}_0\}.$$

\begin{lemma} (Chinese Remainder Theorem)
Let $n_1, n_2, \cdots, n_k$ be pairwise coprime integers with $n_i > 1$ for each $i$. Assume that for each $1 \le i \le k$, $a_i$ is an integer such that $0\le a_i < n_i$. Put $N =n_1n_2\cdots n_k$ be the product of $n_i's$. Then there is a unique integer $x$ such that $0\le x < N$ and $x \equiv a_i (mod~ n_i)$ for each $ 1\le i\le k$.
\end{lemma}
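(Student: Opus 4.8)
The plan is to prove uniqueness and existence separately, resting everything on a single number-theoretic fact: if pairwise coprime integers each divide an integer $m$, then their product divides $m$.

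First I would establish uniqueness. Suppose $x$ and $x'$ both lie in $\{0,1,\dots,N-1\}$ and satisfy $x\equiv a_i\equiv x'\pmod{n_i}$ for every $i$. Then $n_i\mid(x-x')$ for each $i$. The key step is to upgrade this to $N=n_1n_2\cdots n_k\mid(x-x')$. This follows by induction on $k$ from the elementary fact that if $\gcd(a,b)=1$ and $a\mid m$, $b\mid m$, then $ab\mid m$: writing a B\'ezout relation $1=ua+vb$ and multiplying by $m$ exhibits $m$ as a sum of two multiples of $ab$. Pairwise coprimality guarantees that at each inductive step the accumulated product $n_1\cdots n_j$ is coprime to $n_{j+1}$. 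Since $0\le x,x'<N$ gives $|x-x'|<N$, the divisibility $N\mid(x-x')$ forces $x=x'$.

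Next I would establish existence, for which there are two natural routes. The slick route is a counting argument: consider the reduction map sending $x\in\{0,1,\dots,N-1\}$ to the tuple of residues $(x\bmod n_1,\dots,x\bmod n_k)\in\prod_{i=1}^{k}\{0,1,\dots,n_i-1\}$. The uniqueness argument above shows this map is injective, and both its domain and codomain have cardinality exactly $N$, so injectivity forces surjectivity; hence every target tuple $(a_1,\dots,a_k)$ is hit, which is precisely the existence claim. Alternatively, one can give the classical explicit construction: set $N_i=N/n_i$, note $\gcd(n_i,N_i)=1$ since $n_i$ is coprime to every $n_j$ with $j\neq i$, choose $M_i$ with $M_iN_i\equiv 1\pmod{n_i}$ via B\'ezout, and verify that $x_0=\sum_{i=1}^{k}a_iM_iN_i$ satisfies $x_0\equiv a_i\pmod{n_i}$ because each term of index $\neq i$ vanishes modulo $n_i$; reducing $x_0$ modulo $N$ then lands the solution in the required range.

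I would most likely present the counting version, since it reuses the uniqueness computation verbatim and avoids introducing the inverses $M_i$. The only genuine content, and hence the sole place requiring care, is the passage from ``each $n_i$ divides $x-x'$'' to ``$N$ divides $x-x'$''; everything else is bookkeeping. Because the $n_i$ are merely assumed pairwise coprime rather than prime, this step truly needs the coprimality hypothesis together with the inductive lemma above, and it is the step I would write out in full detail.
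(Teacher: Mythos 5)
Your proof is correct, but there is nothing in the paper to compare it against: the paper states the Chinese Remainder Theorem as a classical auxiliary lemma, without any proof, and immediately applies it (in the form of the "in particular" corollary about distinct primes) to verify Example 4.5 on the meet of a decreasing chain of Hausdorff topologies. Your argument is the standard self-contained one, and both of its ingredients are sound: the uniqueness step correctly isolates the only nontrivial point, namely upgrading $n_i \mid (x - x')$ for all $i$ to $N \mid (x-x')$ via B\'ezout and induction on the accumulated product (where pairwise coprimality is genuinely needed, since the $n_i$ are not assumed prime); and the counting argument for existence — injectivity of the reduction map between two sets of the same finite cardinality $N$ forces surjectivity — is a clean way to avoid constructing the inverses $M_i$ explicitly. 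Either of your two routes for existence would be acceptable; for the paper's actual use (nonemptiness of $\bigcap_i [b_i]_{p_i}$ for distinct primes $p_i$), only existence is needed, so even the explicit formula $x_0=\sum_i a_i M_i N_i$ reduced modulo $N$ would suffice on its own.
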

\vskip 0.5cm
In particular, if $\{p_1, p_2, \cdots, p_m\}$ is a finite set of distinct prime numbers and for each $ 1\le i\le m$, $b_i$ is an integer such that $0\le b_i <p_i$, then
$$ \bigcap\{[b_i]_{p_i}: 1 \le i\le m\}\not=\emptyset.$$

\begin{example}\label{meet of chain of t2 topology not sober}
Let $X=\mathbb{N}_0$  and
$P=\{p_k: k\in\mathbb{N}\}$ be the set of all prime numbers, where we assume that $p_k < p_{k+1}$ for all $k$.

For each $n\in \mathbb{N}$, let $\tau_n$ be the topology on $X$  of which
$\mathcal{S}_{n}=\{[k]_{p_t}:  n\le t,  0\le k< p_t\}$ is a subbase.

Clearly  $\{\tau_n: n\in \mathbb{N}\}$ is a decreasing chain of  topologies on $\mathbb{N}_{0}$. We now verify that each $\tau_n$ is $T_2$ and their meet is not sober.

(1) For each  $n\in \mathbb{N}$, $\tau_n$ is $T_2$.

As a matter of fact, for any  $a, b\in X$ with $a\not= b$, there is a prime number $p_m$ such that  $n\le m$ and $a + b < p_m$.
Then $a\in U= [a]_{p_m}\in \tau_n, b\in V= [b]_{p_m}\in\tau_n$.
Also $U\cap V=[a]_{p_m}\cap [b]_{p_m}=\emptyset$ (because  $a\not=b$ and $a, b < p_m$).

(2) Let $\tau=\bigwedge \{\tau_n: n\in\mathbb{N}\}$. We show that $X$ is irreducible under $\tau$.
Assume that $U_1\in\tau, U_2\in\tau$ are nonempty open sets in $\tau$ (we show that $U_1\cap U_2\not=\emptyset$).

Choose $n_1\in U_1$ and $n_2\in U_2$. We just consider the nontrivial case $n_1\not=n_2$.

Note that if $p\in P$ and $[k]_{p}\cap [h]_{p}\not=\emptyset$ and $0\le k <p, 0\le h < p$, then $k=h$.

Since $U_1\in \tau\subseteq \tau_1$ and $U_1\not=\emptyset$, it follows that there is a finite set $F_1\subseteq \mathbb{N}$ such that
$n_1\in \bigcap \{[h_{p_i}]_{p_i}: i\in F_1\}\subseteq U_1$, where $0\le h_{p_i} < p_{i}$ for each $i\in F_1$.  By the definition of elements in $P$,  $p_{i}'s  (i\in F_1)$ are different primes.

Let $m=\max\{i: i\in F_1\}$ be the largest member of $F_1$.
 Again, since $n_2\in U_2\in\tau\subseteq \tau_{m+1}$, there is  a finite set $F_2\subseteq \mathbb{N}$ such that
 $n_2\in \bigcap\{[g_{p_{j}}]_{p_{j}}: j\in F_2\}$, where $0\le g_{p_{j}} < p_{j}$ and $j \ge m+1$ for each $j\in F_2$.

Now $F_1\cap F_2=\emptyset$, so $\{p_j: j\in F_1\cup F_2\}$ is a set of distinct primes. By the Chinese Remainder Theorem,
$$ \bigcap \{[h_{p_i}]_{p_i}: i\in F_1\}\cap \bigcap \{[g_{p_j}]_{p_j}: j\in F_2\}\not=\emptyset.$$
As $\bigcap \{[h_{p_i}]_{p_i}: i\in F_1\}\cap \bigcap \{[g_{p_j}]_{p_j}: j\in F_2\}\subseteq U_1\cap U_2$, we have  $U_1\cap U_2\not=\emptyset$. It follows that  $X$ is an irreducible closed set in $(X, \tau)$.

As the intersection of $T_1$ topologies on a set is always $T_1$, $(X, \tau)$ is $T_1$. Since $X$ is an infinite set, $X$ is not the closure of any  singleton set. Therefore $(X, \tau)$ is not sober.

\end{example}
	
Since every $T_2$ topology is sober, the above example also shows that the meet of a decreasing countable chain of $T_2$ topologies need not be $T_2$.

\begin{remark}
It is well known that the  co-finite topology $\tau_{cof}$ ($U\in\tau_{cof}$ if either $U=\emptyset$  or $X-U$ is a finite set) on each non-singleton set  $X$ is
the smallest  $T_1$ topology on $X$ and is not sober.

It is natural to wonder whether the topology $\tau$ in Example \ref{meet of chain of t2 topology not sober} is actually  the co-finite topology on $\mathbb{N}_0$. We now show that it is not.

Let $A=\{0, p_1, p_1 p_2, \cdots, \}=\{0\}\cup\{\prod^n_{i=1}p_i: n\in\mathbb{N}\}$. We will show that $ X- A\in\tau_n$ for each $n\in\mathbb{N}$, thus $X - A\in\tau$. Hence, $\tau$ is different from the $\tau_{cof}$.

	Fix $n\in\mathbb{N}$ and given $x\in(X-A)$.
	
	Then there is a prime number $p_m$ such that $n < m$, $x < p_m$. Thus $x\in\bigcap{\{[x]_{p_i}: m\le i\le 2m\}}\in \tau_n$. And we only need to prove that $\bigcap{\{[x]_{p_i}: m\le i\le 2m\}}\subseteq (X-A)$ (i.e., $\bigcap{\{[x]_{p_i}: m\le i\le 2m\}}\cap A=\emptyset$).
	
	Let $A_1=\{\prod^n_{i=0}p_i: 1\leq n\leq m\}$ and $A_2=\{\prod^n_{i=1}p_i: m<n\}$. Then $A=A_1\cup A_2$.
	
	First, we will prove that $A_1\cap \bigcap{\{[x]_{p_i}: m\le i\le 2m\}}=\emptyset$. Clearly, $x\not\in A_1$ because $x\not\in A$. Note that $\max(A_1)=\prod^m_{i=1}p_i$ and
	$\min(\bigcap{\{[x]_{p_i}: m\le i\le 2m\}}-\{x\})=x+\prod^{2m}_{i=m}p_i$. Hence $\min(\bigcap{\{[x]_{p_i}: m\le i\le 2m\}}-\{x\})>\max(A_1)$, and it implies $\bigcap{\{[x]_{p_i}: m\le i\le 2m\}}\cap A_1=\emptyset$.
	
	Second, we will prove that $A_2\cap \bigcap{\{[x]_{p_i}: m\le i\le 2m\}}=\emptyset$. Due to $x\not\in A$, $x\not=0$. Because $\bigcap{\{[x]_{p_i}: m\le i\le 2m\}}\subseteq [x]_{p_m}$, $A_2\subseteq [0]_{p_m}$ and $[x]_{p_m}\cap [0]_{p_m}=\emptyset$, we have $\bigcap{\{[x]_{p_i}: m\le i\le 2m\}}\cap A_2=\emptyset$.
	
	Thus $\bigcap{\{[x]_{p_i}: m\le i\le 2m\}}\cap A=\emptyset$. It follows that $(X-A)\in\tau_n$ for each $n\in\mathbb{N}$. That is $A$ is closed in $(X,\bigwedge_{n\in\mathbb{N}}\tau_n)$.
	
	Actually, $A$ is also compact. Given an open cover $\{U_j: j\in J\}$ of $A$. Then $0\in U_{j_0}$ for some $j_0\in J$. Due to $U_{j_0}\in\tau_1$, there is a finite set $F\subseteq \mathbb{N}$ with $a\ge 1$ for each $a\in F$, such that $0\in \bigcap\{[0]_{p_i}: i\in F\}\subseteq U_{j_0}$. It follows that $\{\prod^n_{i=1}p_i:n>\max(F)\}\subseteq U_{j_0}$. And for each $1\le n\le \max(F)$, there is $j_n\in J$ such that $\prod_{i=1}^{n}p_{i}\in U_{j_n}$. Thus $\{U_{j_n}:0\le n\le \max(F)\}$ is a finite subcover of $A$.
\end{remark}

A space  $(X, \tau)$  is  Alexandroff-discrete  if the intersection of any collection of open sets is open. In this case, we call $\tau$ an Alexanderoff - discrete topology.

It then follows easily that a meet of any collection of  Alexandroff-discrete topologies on a set $X$ is also an  Alexandroff -discrete  topology.

Given any poset $(P, \le)$, let $\Upsilon(P, \le)$ (or just $\Upsilon(P)$) be the set of all upper sets of $P$. Then
$(P, \Upsilon(P, \le))$ is a $T_0$ Alexanderoff - discrete space. The topology $\Upsilon(P, \le)$ is called the Alexanderoff topology on $P$\cite{Goubault-2013}\cite{jonstone-82}.

\begin{remark}\label{alexanderoff topology 3}
\II
\I[(1)] A space $(X, \tau)$ is  $T_0$ Alexanderoff-discrete if and only if there is a partial order $\le$ on $X$ such that $\tau=\Upsilon(P, \le)$ (see Exercise 4.2.13 of \cite{Goubault-2013}).
\I[(2)] A subset $F$ of poset $P$ is a closed irreducible set of $(P, \Upsilon(P, \le))$ if and only if $F$ is a directed and lower set of $(P, \le)$.
\I[(3)] For any $x\in P$, the closure of $\{x\}$ in $(P, \Upsilon(P, \le))$ equals $\downarrow x=\{y\in P: y\le x\}$.
Now $\downarrow x -\{x\}$ is still a lower set hence a closed set in $(P, \Upsilon(P, \le))$. Hence $(P, \Upsilon(P, \le))$ is a $T_D$ space.
\I[(4)] If an Alexanderoff - discrete topology $\tau$ is sober and $\mu$ is a topology on the same set such that $\tau\subseteq \mu$,
then as $\tau$ is also $T_D$, by Proposition \ref{sober td spaces}(2), $\mu$ is also sober.
\III
\end{remark}

\begin{proposition}\label{alexanderoff}
 Every Alexanderoff-discrete topology is the meet of  some sober Alexanderoff - discrete topologies.
\end{proposition}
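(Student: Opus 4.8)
The plan is to realise $\tau$ as the meet of one tiny sober topology per specialization step, so that reassembling all the steps recovers exactly the order carried by $\tau$. Throughout I use the standard fact (the preorder analogue of Remark~\ref{alexanderoff topology 3}(1)) that in an Alexandroff-discrete space the open sets are precisely the upper sets of the specialization \emph{preorder} $\preceq$, where $a\preceq b$ means $a\in cl_{\tau}(\{b\})$: the minimal open neighbourhood of a point $x$ is $\{y:x\preceq y\}$, and every open set is a union of these. Thus $U\in\tau$ iff $a\in U$ and $a\preceq b$ force $b\in U$. (When $\tau$ is $T_0$ this is exactly $\tau=\Upsilon(X,\preceq)$; in general $\preceq$ may fail to be antisymmetric, which is the only reason to phrase things with a preorder rather than a poset.)

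For the building blocks, for each ordered pair $(a,b)$ with $a\neq b$ and $a\in cl_{\tau}(\{b\})$ I would define the partial order $\le_{a,b}$ on $X$ whose only nontrivial relation is $a<_{a,b}b$, all other distinct points being incomparable, and set $\tau_{a,b}=\Upsilon(X,\le_{a,b})$. Each $\tau_{a,b}$ is an Alexandroff-discrete topology. To see it is sober I would invoke Remark~\ref{alexanderoff topology 3}(2): its closed irreducible sets are the directed lower sets of $(X,\le_{a,b})$. A short case analysis shows that a directed subset with at least two elements must be $\{a,b\}$, so the directed lower sets are exactly $\{a,b\}=\downarrow b$ together with the singletons $\{c\}=\downarrow c$ for $c\neq b$; each is the closure of a unique point by Remark~\ref{alexanderoff topology 3}(3). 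Hence $\tau_{a,b}$ is sober.

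It remains to verify $\tau=\bigcap\{\tau_{a,b}: a\neq b,\ a\in cl_{\tau}(\{b\})\}$. A set $U$ lies in $\tau_{a,b}$ iff it is $\le_{a,b}$-upper, i.e.\ iff $a\in U\Rightarrow b\in U$. Therefore $U$ belongs to the intersection iff for every pair $a\neq b$ with $a\in cl_{\tau}(\{b\})$ one has $a\in U\Rightarrow b\in U$, which is precisely the statement that $U$ is $\preceq$-upper, i.e.\ $U\in\tau$. So $\tau$ is the meet of the sober Alexandroff-discrete topologies $\tau_{a,b}$, as required. (In the degenerate case where $\tau$ is the discrete topology there are no such pairs; the empty meet is the discrete topology, which is already $\tau$ and sober, so the statement holds trivially.)

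The only genuinely delicate point, which I expect to be the main obstacle, is the non-$T_0$ case: when $a$ and $b$ are distinct but $\preceq$-equivalent (both $a\in cl_{\tau}(\{b\})$ and $b\in cl_{\tau}(\{a\})$), the correct bookkeeping is to include \emph{both} $\tau_{a,b}$ and $\tau_{b,a}$, whose two constraints combine to $a\in U\Leftrightarrow b\in U$, exactly the indiscernibility forced by $\tau$. Getting the direction of the specialization relation right in the passage between ``$a\in cl_{\tau}(\{b\})$'' and the upper-set condition is where care is needed; once that is fixed, soberness of the blocks is the routine case check above and the meet computation is immediate.
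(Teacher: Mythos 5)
Your proposal is correct and takes essentially the same route as the paper: the paper likewise forms, for each pair $a\le b$ with $a\not=b$, the partial order $\le_{(a,b)}$ whose only nontrivial relation is $(a,b)$, checks that $\Upsilon(X,\le_{(a,b)})$ is sober by listing its closed irreducible sets, and verifies $\tau=\bigwedge\{\Upsilon(X,\le_{(a,b)}): a\not=b,\ a\le b\}$ exactly as you do. The one difference is in your favour: the paper starts from ``we can assume $\tau=\Upsilon(X,\le)$ for a partial order $\le$,'' which by Remark~\ref{alexanderoff topology 3}(1) tacitly restricts to the $T_0$ case, whereas your bookkeeping with the specialization preorder (including both $\tau_{a,b}$ and $\tau_{b,a}$ for $\preceq$-equivalent points) covers non-$T_0$ Alexandroff-discrete topologies as well.
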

 \begin{proof}
  Let $(X, \tau)$ be an Alexanderoff-discrete space. We can assume that
 $\tau=\Upsilon(X, \le)$, where $\le$ is a partial order on $X$.

 For each pair of elements  $a, b$ in $X$ with $a\le b$ and $a\not=b$, let
 $$\le_{(a, b)}=\{(x, x): x\in X\}\cup \{(a, b)\}.$$
 Then $\le_{(a, b)}$ is a partial order on $X$. A subset $F\subseteq X$ is a closed irreducible set in $(X, \Upsilon(X, \le_{(a, b)}))$ if and only if either $F=\{x\}$ for some $x\in X$ with $x\not=b$, or $F=\{a, b\}$. In the first case, $F=cl(\{x\})$, while in the second case $F=cl(\{b\})$. Thus $(X, \Upsilon(X, \le_{(a, b)}))$ is sober.

 We now show that
 $$\tau=\bigwedge\{\Upsilon(X, \le_{(a, b)}): a\not=b, a\le b\}.$$
Clearly,  $\tau\subseteq \bigwedge\{\Upsilon(X, \le_{(a, b)}): a\not=b, a\le b\}$.
Now let $U\in \bigwedge\{\Upsilon(X, \le_{(a, b)}): a\not=b, a\le b\}$. Let $x\le y$ and $x\in U$. If $y=x$, then immediately, $y\in U$. If $y\not=x$, then $x\le_{(x, y)} y$. Since $U\in \Upsilon(X, \le_{(x, y)})$ and $x\in U$, thus $y\in U$. It follows that
$U\in \Upsilon(X, \le)=\tau$.
It follows that $\tau=\bigwedge\{\Upsilon(X, \le_{(a, b)}): a\not=b, a\le b\}.$
The proof is completed.
\end{proof}

\section{Minimal sober topologies}

People have studied minimal members of several classes of topologies, such as minimal Hausdorff topologies, minimal regular topologies, minimal locally compact topologies and minimal normal topologies (see \cite{berri-1962} \cite{cameron-1971}).
In this section, we study  minimal sober topologies. One necessary and sufficient conditions of such topologies is obtained.

A topology $\tau$ on $X$ is called a {\sl minimal sober topology}, if it is sober and there is no sober topology on $X$ strictly coarser than $\tau$.

A topological space $(X, \tau)$ is called a {\sl minimal sober space}, if $\tau$ is a minimal sober topology on $X$.

We first propose a construction method, that yields a necessary condition for minimal sober spaces.

\begin{lemma}\label{construction}
	Let $(X,\tau)$ be a $T_0$ space and $x, y\in X$ are noncomparable elements with respect to the specialization order $\le_{\tau}$ (that is, $x\not\in cl_{\tau}(\{y\})$ and $y\not\in cl_{\tau}(\{ x\})$).
	
	Define $\tau^*=\{U\in\tau: x\in U\}\cup\{U\in\tau: x\not\in U, y\not\in U\}.$
	
	If $(X,\tau)$ is a $T_0$ space, then $(X,\tau^*)$ is also a $T_0$ space.
\end{lemma}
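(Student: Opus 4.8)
The plan is to first confirm that $\tau^*$ is genuinely a topology, then compute its specialization order explicitly, and finally read off the $T_0$ property as antisymmetry of that order. The key observation is that a $\tau$-open set $U$ lies in $\tau^*$ precisely when it is \emph{not} the case that $y\in U$ while $x\notin U$; that is, $\tau^*=\{U\in\tau: y\in U\Rightarrow x\in U\}$. From this reformulation, closure under arbitrary unions and finite intersections is immediate (if $y$ enters a union it already lies in one member, which then carries $x$; likewise for finite intersections), and $\emptyset,X\in\tau^*$, so $\tau^*$ is a topology with $\tau^*\subseteq\tau$. Since discarding open sets can only enlarge the specialization preorder, we have $\le_\tau\,\subseteq\,\le_{\tau^*}$, and because $(X,\tau^*)$ being $T_0$ is equivalent to $\le_{\tau^*}$ being antisymmetric, it suffices to understand $\le_{\tau^*}$ well enough to exclude $2$-cycles.

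The central step is to establish the identity
$$a\le_{\tau^*}b\iff a\le_\tau b\ \text{ or }\ (a\le_\tau y\ \text{and}\ x\le_\tau b).$$
I would prove it by splitting on whether $a\le_\tau y$. If $a\not\le_\tau y$ and $a\not\le_\tau b$, pick a $\tau$-open $U\ni a$ with $b\notin U$; then $U\setminus cl_\tau(\{y\})$ is still $\tau$-open, omits $y$ (so it lies in $\tau^*$), still contains $a$ (as $a\notin cl_\tau(\{y\})$) and still omits $b$, giving $a\not\le_{\tau^*}b$; thus on such $a$ the order $\le_{\tau^*}$ agrees with $\le_\tau$. If $a\le_\tau y$, then every $\tau$-open neighbourhood of $a$ already contains $y$, so the $\tau^*$-open neighbourhoods of $a$ are exactly the $\tau$-open sets containing both $a$ and $x$; hence $a\le_{\tau^*}b$ means every $\tau$-open set containing $\{a,x\}$ contains $b$. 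The forcing direction uses a union trick: if $a\not\le_\tau b$ and $x\not\le_\tau b$, choose $\tau$-open $U_1\ni a$ and $U_2\ni x$ each omitting $b$, so that $U_1\cup U_2$ contains $\{a,x\}$ but not $b$, whence $a\not\le_{\tau^*}b$. The reverse implication is immediate: $a\le_\tau b$ yields $a\le_{\tau^*}b$ since $\le_\tau\subseteq\le_{\tau^*}$, while $x\le_\tau b$ forces $b$ into every $\tau$-open set containing $x$, hence into every one containing $\{a,x\}$.

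Finally I would deduce antisymmetry from this identity. Suppose $a\le_{\tau^*}b$ and $b\le_{\tau^*}a$ with $a\ne b$; each relation holds through one of the two disjuncts, producing four combinations. The case where both arise from the first disjunct contradicts antisymmetry of $\le_\tau$. In each of the three remaining combinations one chains the available $\le_\tau$-relations in the pattern $x\le_\tau(\cdots)\le_\tau y$ and invokes transitivity of $\le_\tau$ to conclude $x\le_\tau y$, i.e.\ $x\in cl_\tau(\{y\})$, contradicting the noncomparability hypothesis. Hence $\le_{\tau^*}$ admits no $2$-cycle and $(X,\tau^*)$ is $T_0$. The main obstacle is precisely the hard direction of the order identity together with controlling how the newly created relations propagate: the construction always forces the relation $y\le_{\tau^*}x$, and it is exactly the assumption $x\notin cl_\tau(\{y\})$ that prevents this from combining with an existing $x\le_\tau y$ to collapse $x$ and $y$ and thereby destroy $T_0$.
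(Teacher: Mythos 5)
Your proposal is correct and takes essentially the same route as the paper: the central step in both is the identity $\le_{\tau^*}\,=\,\le_\tau\cup\{(u,v): u\le_\tau y \text{ and } x\le_\tau v\}$ (which the paper derives via the closure formula for $cl_{\tau^*}(\{a\})$ and you derive dually via open neighbourhoods), followed by a case analysis that reduces any failure of $T_0$ to the contradiction $x\le_\tau y$. The only cosmetic difference is that you phrase the conclusion as antisymmetry of the specialization preorder, while the paper directly exhibits distinct point closures for distinct points.
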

\begin{proof}
	
	It is easy to see that $\tau^{*}$ is a topology and $\tau^{*} \subseteq \tau$. The set of all closed sets of $(X,\tau^*)$ is  $$\Gamma(X,\tau^*)=\{C\in\Gamma(X,\tau): x\not\in C\}\cup \{C\in\Gamma(X), \{x, y\}\subseteq C\}.$$
	
	Claim 1. For each $a\in X$, it holds that\[
	cl_{\tau^*}(\{a\}) =	
\begin{cases}
		cl_{\tau}(\{a\}), & \quad x\not\in cl_{\tau}(\{a\}), \\
		cl_{\tau}(\{a\})\cup cl_{\tau}(\{y\}), & \quad x\in cl_{\tau}(\{a\}).
	\end{cases}
	\]
	
	As a mater of fact, if $x\not\in cl_{\tau}(\{a\})$, then  $cl_{\tau}(\{a\})\in \Gamma(X,\tau^*)$ and $cl_{\tau}(\{a\})\subseteq cl_{\tau^{*}}(\{a\})$,  hence  $cl_{\tau^{*}}(\{a\})=cl_{\tau}(\{a\})$.
	
	If $x\in cl_{\tau}(\{a\})$, then $a\in cl_{\tau}(\{a\}) \cup cl_{\tau}(\{y\})\in \Gamma(X,\tau^*)$, showing that $cl_{\tau^*}(\{a\})\subseteq cl_{\tau}(\{a\}) \cup cl_{\tau}(\{y\})$.  In addition, $cl_{\tau^*}(\{a\})\in \Gamma(X,\tau^*)$ and
$x\in cl_{\tau}(\{a\})\subseteq cl_{\tau^*}(\{a\})$, so $x\in cl_{\tau^*}(\{a\})$. Hence $y\in cl_{\tau^*}(\{a\})$. Therefore,
$cl_{\tau}(\{a\})\cup cl_{\tau}(\{y\})\subseteq cl_{\tau^*}(\{a\})$. All these together show that
$cl_{\tau^{*}}(\{a\})=cl_{\tau}(\{a\})\cup cl_{\tau}(\{y\})$.
	
Hence, $\le_{\tau^*}=\le_\tau\cup\{(u,v):u\le_\tau y \mbox{ and }  x\le_\tau v\}$.
	
	\vskip 0.5cm
	
	Claim 2. $(X, \tau^{*})$ is $T_0$.
	\vskip 0.2cm
	Let $a, b\in X$ with $a\not=b$. Since $(X, \tau)$ is $T_0$, without lose of generality, we can assume that $a\not\in cl_{\tau}(\{b\})$.
	
	(i) If $x\not\in cl_{\tau} (\{b\})$,  by claim 1, $cl_{\tau}(\{b\})=cl_{\tau^{*}}(\{b\})$.  It implies that $a\not\in cl_{\tau}(\{b\})=cl_{\tau^*}(\{b\})$.
	
	(ii) Now assume that $x\in cl_{\tau}(\{b\})$. Then by claim 1, $cl_{\tau^{*}}(\{b\})=cl_{\tau}(\{b\})\cup cl_{\tau}(\{y\})$.
	
	If $a\not\in cl_{\tau}(\{y\})$, then $a\not\in cl_{\tau}(\{b\})\cup cl_{\tau}(\{y\})=cl_{\tau^{*}}(\{b\}).$
	
	If $a\in cl_{\tau}(\{y\})$, then $x\not\in cl_{\tau}(\{a\})$ because $x\not\in cl_{\tau}(\{y\})$. By claim 1,  $cl_{\tau^{*}}(\{a\})=cl_{\tau}(\{a\})$. Now as $x\in cl_{\tau}(\{b\})$ and $x\not\in cl_{\tau}(\{a\})$, we have that
	$b\not\in cl_{\tau}(\{a\})=cl_{\tau^{*}}(\{a\})$.
	
	All these together show that $(X, \tau^{*})$ is $T_0$.
	\end{proof}
	
	\begin{lemma}\label{n of sober}
		Let  $\tau$ be a topology on $X$ and $\tau^{*}$ be the topology defined from $\tau$ and non-comparable elements $x, y\in X$ as in  \rm{Lemma \ref{construction}}.
\II
\I[(1)] If $(X,\tau)$ is a sober space,
		then $(X,\tau^*)$ is  a sober space.
\I[(2)] If $(X,\tau)$ is a well-filtered space, then  $(X,\tau^*)$ is  a well-filtered space.
\I[(3)] If $(X,\tau)$ is a d-space, then  $(X,\tau^*)$ is  a d-space.
\III
	\end{lemma}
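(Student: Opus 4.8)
The plan is to extract everything from the explicit data furnished by Lemma~\ref{construction}: a set is $\tau^*$-closed exactly when it is $\tau$-closed and satisfies $x\in C\Rightarrow y\in C$ (dually, $U$ is $\tau^*$-open iff $U\in\tau$ and $y\in U\Rightarrow x\in U$), the two-branch closure formula for $cl_{\tau^*}(\{a\})$, and the description $\le_{\tau^*}=\,\le_\tau\cup\{(u,v):u\le_\tau y,\ x\le_\tau v\}$. Since $(X,\tau^*)$ is already $T_0$, each part will be reduced to the corresponding property of $\tau$. For (1) I would take $C\in\mathrm{Irr}_c(X,\tau^*)$ and split on whether $x\in C$. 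If $x\notin C$, then every $\tau$-closed subset of $C$ misses $x$ and is therefore automatically $\tau^*$-closed, so any proper $\tau$-decomposition of $C$ would be a $\tau^*$-decomposition; hence $C$ is $\tau$-irreducible, sobriety of $\tau$ gives $C=cl_\tau(\{z\})$, and the first branch of the closure formula (valid because $x\notin C$) yields $cl_{\tau^*}(\{z\})=C$.

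If instead $x\in C$, then $y\in C$ and $cl_\tau(\{y\})\subseteq C$, and the key device is the promotion trick: for any $\tau$-closed $F$, the set $F\cup cl_\tau(\{y\})$ is $\tau^*$-closed, since it always contains $y$. Promoting the pieces of a $\tau$-decomposition and applying $\tau^*$-irreducibility shows that $C$ is $\tau$-irreducible ``modulo $cl_\tau(\{y\})$''. I would then set $C_0:=cl_\tau(C\setminus cl_\tau(\{y\}))$ and check that $C=C_0\cup cl_\tau(\{y\})$, that $C_0$ is a nonempty $\tau$-irreducible closed set (again via promotion), and that $x\in C_0$ (as $x\in C$ but $x\notin cl_\tau(\{y\})$ by noncomparability). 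Sobriety of $\tau$ gives $C_0=cl_\tau(\{z\})$ with $x\in cl_\tau(\{z\})$, so the second branch of the closure formula gives $cl_{\tau^*}(\{z\})=cl_\tau(\{z\})\cup cl_\tau(\{y\})=C$; uniqueness of $z$ follows from $T_0$-ness of $\tau^*$.

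For (2) and (3) I would invoke the order-theoretic characterizations of well-filtered spaces and d-spaces recalled in Section~1. The crucial step for (2) is that every $\tau^*$-compact saturated set $K$ is already $\tau$-compact (it is $\tau$-saturated automatically, since $\le_\tau\subseteq\le_{\tau^*}$ makes any $\le_{\tau^*}$-upper set a $\le_\tau$-upper set). Given a $\tau$-open cover of $K$, I convert it into a $\tau^*$-open cover in two cases: if $x\in K$, fix a cover member $V_0\ni x$ and replace each $V_i$ by $V_i\cup V_0$, which is $\tau^*$-open as it contains $x$; if $x\notin K$, then $y\notin K$ (because $y\le_{\tau^*}x$ and $K$ is an upper set) and $K\cap cl_\tau(\{y\})=\emptyset$, so I intersect each $V_i$ with the $\tau^*$-open set $X\setminus cl_\tau(\{y\})$. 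In either case $\tau^*$-compactness yields a finite subfamily whose original members already cover $K$. Hence a filtered family of $\tau^*$-compact saturated sets is a filtered family of $\tau$-compact saturated sets, and since $\tau^*\subseteq\tau$, the well-filteredness of $\tau$ transfers at once. For (3) I would verify the directed condition: given $\le_{\tau^*}$-directed $D$ and $U\in\tau^*$ with $\bigcap_{d\in D}\uparrow_{\tau^*}d\subseteq U$, if some $d_0\in D$ has $d_0\not\le_\tau y$ then the cofinal set $\{d\in D:d_0\le_{\tau^*}d\}$ consists of elements $\ge_\tau d_0$, still $\not\le_\tau y$, on which $\le_{\tau^*}$ coincides with $\le_\tau$ and $\uparrow_{\tau^*}d=\uparrow_\tau d$, reducing to the d-space condition for $\tau$; otherwise $D\subseteq cl_\tau(\{y\})$, where $\le_{\tau^*}$ again coincides with $\le_\tau$ and $\uparrow_{\tau^*}d=\uparrow_\tau d\cup\uparrow_\tau x$, so $\bigcap_d\uparrow_{\tau^*}d=(\bigcap_d\uparrow_\tau d)\cup\uparrow_\tau x$ and the hypothesis forces $\bigcap_d\uparrow_\tau d\subseteq U$, whence $\tau$ finishes.

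I expect the main obstacle to be part (2): the naive reduction fails because a $\tau^*$-compact set need not be $\tau$-compact, and the whole argument rests on the covering trick above, which succeeds only because $K$ is saturated (an upper set for $\le_{\tau^*}$) — this is precisely what pins down the position of $x$ and $y$ relative to $K$. A secondary technical point, common to (2) and (3), is the bookkeeping needed to show that suprema of directed sets and intersections of principal upper sets computed in $\le_{\tau^*}$ collapse to the corresponding $\le_\tau$-data after passing to a cofinal piece or localizing inside $cl_\tau(\{y\})$.
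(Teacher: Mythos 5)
Your proposal is correct and follows essentially the same route as the paper's proof: the same case split on whether $x\in C$ together with the same promotion trick $F\mapsto F\cup cl_\tau(\{y\})$ (your $C_0=cl_\tau(C\setminus cl_\tau(\{y\}))$ is exactly the paper's $\widehat{F}=\bigcap\{F\in\Gamma(X,\tau): C=F\cup cl_\tau(\{y\})\}$), the same two-case modification of a $\tau$-open cover showing that $\tau^*$-compact saturated sets are $\tau$-compact, and the same dichotomy on whether $D\subseteq cl_\tau(\{y\})$ for the d-space part. If anything, your explicit verification that $\le_{\tau^*}$ coincides with $\le_\tau$ on the relevant (cofinal or localized) part of $D$ is slightly more careful than the paper's blanket assertion that $D$ is $\le_\tau$-directed.
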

	\begin{proof} First, note that for any $H\in \Gamma(X, \tau)$, $H\cup cl_{\tau}(\{y\})\in \Gamma(X, \tau^*)$ (if $x\in H$, then $\{x, y\}\subseteq H\cup cl_{\tau}(\{y\})$; if $x\not\in H$, then $x\not\in H\cup cl_{\tau}(\{y\})$).
\vskip 0.5cm

  (1)  Let $(X,\tau)$ be sober and  $C\in Irr_c(X,\tau^*)$.
	
	(i) Assume $x\not\in C$.
	
	If $C= C_1\cup C_2$ for some $C_1,C_2\in\Gamma(X,\tau)$, then $C_1,C_2\in\Gamma(X,\tau^*)$ by $x\not\in C$.
	Thus $C=C_1$ or $C=C_2$, as $C\in Irr_c(X,\tau^*)$. It follows that $C\in Irr_c(X,\tau)$. Since $(X, \tau)$ is sober, $C=cl_{\tau}(\{a\})$ for some $a\in X$. As $x\not\in C$, so $x\not\in cl_{\tau}(\{a\})$. By Lemma \ref{construction}, $cl_{\tau^{*}}(\{a\})=cl_{\tau}(\{a\})$. Hence $C=cl_{\tau^{*}}(\{a\})$.
	
	(ii) Let $x\in C$. Then $cl_{\tau^*}(\{x\})=cl_{\tau}(\{x\})\cup cl_{\tau}(\{y\})$, which is contained in $C$ as $C\in \Gamma(X, \tau^*)$. Hence $cl_{\tau}(\{y\})\subseteq C$.
	
	Let $\mathcal{A}=\{F:F\in\Gamma(X,\tau),C=F\cup cl_{\tau}(\{y\})\}$. By the above deduced fact,  $C=C\cup cl_{\tau}(\{y\})$, hence $C\in \mathcal{A}$.

 It follows that $C=(\bigcap_{F\in\mathcal{A}}F)\cup cl_{\tau}(\{y\})$.
 	
 Let $\widehat{F}=\bigcap_{F\in\mathcal{A}}F$. Then $C=\widehat{F}\cup cl_{\tau}(\{y\})$.

 Assume $\widehat{F}= D\cup E$ for some $D,E\in\Gamma(X,\tau)$.

Then $C= (D\cup cl_\tau(\{y\}))\cup (E\cup cl_\tau(\{y\}))$. Since $C$ is irreducible in $(X, \tau^*)$, hence
$C= D\cup cl_\tau(\{y\})$ or $C= E\cup cl_\tau(\{y\})$, which further deduces that $\widehat{F}=D$ or $\widehat{F}=E$ (note that if  $C= D\cup cl_\tau(\{y\})$, for example, then $D\in \mathcal{A}$, thus $D\subseteq \widehat{F}\subseteq D$, implying $D=\widehat{F}$).

That is $\widehat{F}\in Irr_c(X,\tau)$. As $(X, \tau)$ is sober, $\widehat{F}=cl_{\tau}(\{a\})$ for some $a\in X$. By $x\in C=cl_{\tau}(\{a\})\cup cl_{\tau}(\{y\})$ and $x\not\in cl_{\tau}(\{y\})$, we have $x\in cl_{\tau}(\{a\})$.
 Thus $$C=\widehat{F}\cup cl_{\tau}(\{y\})=cl_{\tau}(\{a\})\cup cl_{\tau}(\{y\})=cl_{\tau^*}(\{a\}).$$

Since $(X, \tau^*)$ is  $T_0$, it is sober.

\vskip 0.5cm

 (2) 	Let $(X, \tau)$ be well-filtered. Assume that $K$ is a compact saturated set of $(X,\tau^*)$ and $\{U_i\in\tau:i\in I\}\subseteq \tau$ be an open cover of $K$.
	
	(i) Assume  $x\not\in K$.

Note that $cl_{\tau^*}(\{x\})=cl_\tau(\{x\})\cup cl_\tau(\{y\})$. As $K$ is the intersection of all open sets $V\in \tau^{*}$ containing $K$, one has that $cl_{\tau^*}(\{x\})\cap K=\emptyset$. Now
$K=K-cl_{\tau^*}(\{x\})\subseteq \bigcup\{U_i - cl_{\tau^*}(\{x\}):i\in I\}
= \bigcup\{U_i - (cl_\tau(\{x\})\cup cl_\tau(\{y\})):i\in I\}$. Not that each
$U_i - (cl_\tau(\{x\})\cup cl_\tau(\{y\}))\in \tau^*$.
As $K$ is compact in $(X, \tau^{*})$,  $K\subseteq \bigcup\{U_i - (cl_\tau(\{x\})\cup cl_\tau(\{y\})): i\in F\} \subseteq \bigcup\{U_i:i\in F\}$ for some finite $F\subseteq I$.

 (ii) 	Assume $x\in K$.

 Then $x\in U_{i_0}$ for some $i_0\in I$ and $K\subseteq \bigcup\{U_{i_0}\cup U_i: i\in I\}$. Because $U_{i_0}\cup U_i\in \tau^*$ for each $i\in I$, $K\subseteq \{U_{i_0}\cup U_i:i\in F\}$ for some finite $F\subseteq I$.

 Thus  $K$ is also a compact set of $(X,\tau)$. Furthermore, $K$ is saturated in $(X, \tau)$ by $\tau^*\subseteq \tau$.
	
Then by the definition of well-filtered spaces and  that $\tau^*\subseteq \tau$,  $(X,\tau^*)$ is also well filtered.
	
	\vskip 0.5cm
	
 (3)  Assume that  $(X,\tau)$ is a d-space.

 We show that $(X,\tau^*)$ is also a d-space.
	\vskip 0.2cm
	Let $U\in\tau^*$ and $D$ be a directed set of $(X,\le_{\tau^*})$ such that $\bigcap\{\uparrow_{\tau^*}d: d\in D\}\subseteq U$.
	As $\tau^*\subseteq \tau$, $D$ is also a directed subset of $(X, \le_{\tau})$.

	(i) Assume $y\in \uparrow_{\tau^*}d$ for each $d\in D$.
	
Now  $\uparrow_{\tau^*}d=\{a\in X: d\in cl_{\tau^*}(\{a\})\}=\{a\in X: d\in cl_{\tau^*}(\{a\}), x\not\in cl_{\tau}(\{a\})\}\cup \{a\in X: d\in cl_{\tau^*}(\{a\}), x\in cl_{\tau}(\{a\})\}$
From this, we can deduce easily that
$\uparrow_{\tau^*}d=\uparrow_\tau d \cup \uparrow_\tau x$.

 It follows that $\bigcap\{\uparrow_{\tau^*}d:d\in D\}=\bigcap\{\uparrow_{\tau}d\cup\uparrow_\tau x:d\in D\}=\bigcap\{\uparrow_{\tau}d:d\in D\}\cup \uparrow x\subseteq U$.
 Thus $\bigcap\{\uparrow_{\tau}d:d\in D\}$, $D$ is a directed set in $(X, \le_{\tau})$ and $U\subset \tau$. As $(X, \tau)$ is a d-space,  $\uparrow_\tau d_{0} \subseteq U$ holds for some $d_0\in D$.
Then $\uparrow_{\tau^*} d_0 =\uparrow_{\tau}d_0 \cup\uparrow_\tau x  \subseteq U$.
	
	(ii) Now assume $y\not\in \uparrow_{\tau^*}d_0$ for some $d_0\in D$.

   Then $y\not\in \uparrow_{\tau^*}\!d $ for all $d\in D$ with $d_0\le_{\tau^*}d$, that is $d\not\in cl_{\tau^*}(\{y\})=cl_{\tau}(\{y\})$.  For each such $d$,
   $d\le_{\tau^*} a$ if and only if  $d\in cl_{\tau^*}(\{a\})$,  if and only if $d\in cl_{\tau}(\{a\})$, if and only if  $a\in \uparrow_{\tau} d$.
   	
  Hence $\bigcap\{\uparrow_{\tau^*}d:d\in D\}=\bigcap\{\uparrow_{\tau^*}d:d\in D,d_0\le_{\tau^*}d\}=\bigcap\{\uparrow_{\tau}d:d\in D,d_0\le_{\tau}d\}\subseteq U$. So, as $(X, \tau)$ is a d-space, there is $d\in D$ such that $d_0\le_{\tau^*} d$ and $\uparrow_{\tau^*}d=\uparrow_\tau d\subseteq D$.

All these together show that $(X, \tau^{*})$ is a d-space using the characterization given in \cite{li-yuan-zhao-2020}.
\end{proof}
\vskip 0.5cm
As $cl_{\tau^{*}}(\{x\})=cl_{\tau}(\{x\})\cup cl_{\tau}(\{y\})\not=cl_{\tau}(\{x\})$, the topology $\tau^*$ is strictly coarser than $\tau$. Hence we have the following conclusions.

\begin{corollary}\label{minimal sober is a chain}
If $(X, \tau)$ is a minimal sober space (well-filtered space, d-space, respectively), then $(X, \le_\tau)$ is a chain (for any $x, y\in X$, it holds that either $x\le_{\tau} y$ or $y\le_{\tau} x$).
\end{corollary}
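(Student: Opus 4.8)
The plan is to argue by contraposition, exploiting the construction of Lemma \ref{construction} together with the preservation results of Lemma \ref{n of sober}. Suppose $(X,\tau)$ is a minimal sober space (the arguments for well-filtered and for d-spaces are identical, using the appropriate part of Lemma \ref{n of sober}), and suppose toward a contradiction that $(X,\le_{\tau})$ is \emph{not} a chain. Then there exist $x,y\in X$ that are noncomparable with respect to $\le_{\tau}$, i.e.\ $x\notin cl_{\tau}(\{y\})$ and $y\notin cl_{\tau}(\{x\})$. These are exactly the hypotheses needed to invoke the construction, so I would form the topology
$$\tau^{*}=\{U\in\tau: x\in U\}\cup\{U\in\tau: x\notin U,\ y\notin U\}$$
as in Lemma \ref{construction}.

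Next I would apply Lemma \ref{n of sober}(1), which guarantees that $(X,\tau^{*})$ is again sober (and, in the other two cases, Lemma \ref{n of sober}(2) and (3) give well-filteredness and the d-space property, respectively). So $\tau^{*}$ stays inside the class under consideration. It then remains only to verify that $\tau^{*}$ is \emph{strictly} coarser than $\tau$. By Claim~1 in the proof of Lemma \ref{construction} we have $cl_{\tau^{*}}(\{x\})=cl_{\tau}(\{x\})\cup cl_{\tau}(\{y\})$, and since noncomparability gives $y\notin cl_{\tau}(\{x\})$, the point $y$ lies in $cl_{\tau^{*}}(\{x\})\smallsetminus cl_{\tau}(\{x\})$. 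Hence the two closure operators differ, so $\tau^{*}\subsetneq\tau$ (the inclusion $\tau^{*}\subseteq\tau$ being immediate from the definition). This contradicts the minimality of $\tau$ within the sober (well-filtered, d-) topologies, completing the contrapositive.

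There is no genuinely hard step here; the whole proof is a one-line application of the machinery already assembled, and the remark preceding the corollary essentially records the strictness computation I would reuse. The only points requiring a little care are bookkeeping ones: making sure the noncomparability hypothesis is phrased exactly as the specialization-order statement that Lemma \ref{construction} requires, and confirming that each of the three ambient notions (minimal sober, minimal well-filtered, minimal d-space) is handled by the matching clause of Lemma \ref{n of sober}, so that in every case the constructed $\tau^{*}$ remains in the same class while being strictly smaller.
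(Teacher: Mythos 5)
Your proposal is correct and follows exactly the paper's own route: the paper derives the corollary from the same remark that $cl_{\tau^{*}}(\{x\})=cl_{\tau}(\{x\})\cup cl_{\tau}(\{y\})\not=cl_{\tau}(\{x\})$ makes $\tau^{*}$ strictly coarser than $\tau$, combined with Lemmas \ref{construction} and \ref{n of sober}, contradicting minimality. Nothing is missing; your bookkeeping remarks (matching each class to the corresponding clause of Lemma \ref{n of sober}) are precisely the content the paper leaves implicit.
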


Recall that the upper topology $\nu(P)$ on a poset $P$ is the topology of which $\{ P-\downarrow x: x\in P\}$ is a subbase \cite{Gier-2003}. The following result should have been proved by other people already. For reader's convenience, we give a brief proof.

\begin{lemma}\label{lem}
	For any chain $C$, $\nu(C)=\sigma(C)$.
\end{lemma}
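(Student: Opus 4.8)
The plan is to prove the two inclusions $\nu(C)\subseteq\sigma(C)$ and $\sigma(C)\subseteq\nu(C)$ separately, noting that the first holds for arbitrary posets and only the second uses that $C$ is a chain. For $\nu(C)\subseteq\sigma(C)$ I would first check that for any poset $P$ and any $x\in P$, the set $\downarrow x$ is Scott closed: it is a lower set, and if $D\subseteq\downarrow x$ is directed with $\sup D$ existing, then $x$ is an upper bound of $D$, whence $\sup D\le x$ and $\sup D\in\downarrow x$. Thus every subbasic member $P-\downarrow x$ of $\nu(P)$ is Scott open, and since $\sigma(P)$ is a topology it contains the topology generated by these sets; that is, $\nu(P)\subseteq\sigma(P)$, and in particular $\nu(C)\subseteq\sigma(C)$.

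For the reverse inclusion I would exploit that on a chain the subbasic set $C-\downarrow x$ equals the strict up-set $\{z\in C: z>x\}$, and that a finite intersection of such sets is again of this form, since finitely many elements of a chain have a maximum. Hence a proper subset $U\subseteq C$ is $\nu(C)$-open if and only if for every $y\in U$ there is some $x<y$ with $\{z\in C: z>x\}\subseteq U$ (the cases $U=\emptyset$ and $U=C$ being the empty union and the whole space, respectively). So the task reduces to verifying this covering condition for an arbitrary proper nonempty Scott-open $U$.

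Let $U\in\sigma(C)$ be proper and nonempty, put $L=C-U$, and fix $y\in U$; then $L$ is a nonempty Scott-closed lower set. Since $L$ is a lower set and $y\notin L$, every element of $L$ lies strictly below $y$, so $y$ is a strict upper bound of $L$. I claim some upper bound of $L$ is strictly below $y$. If not, then $y$ is the least upper bound of $L$, so $\sup L=y$ exists; but every nonempty subset of the chain $C$ is directed, hence $L$ is directed, and Scott-openness of $U$ together with $\sup L=y\in U$ would force $L\cap U\neq\emptyset$, contradicting $L=C-U$. Therefore there is $x<y$ that is an upper bound of $L$, i.e. $\{z:z>x\}\cap L=\emptyset$, so $\{z:z>x\}\subseteq U$ and $y\in\{z:z>x\}$. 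Letting $y$ range over $U$ exhibits $U$ as a union of strict up-sets, so $U\in\nu(C)$, completing $\sigma(C)\subseteq\nu(C)$.

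The main obstacle is precisely this last step, and it is the one point where the chain hypothesis is essential: passing from ``$U$ Scott open'' to the covering by strict up-sets. The crux is the observation that any nonempty subset of a chain is automatically directed, which lets me feed the complement $L$ into the directed-supremum clause of the Scott topology and derive the contradiction. Everything else is routine bookkeeping about lower sets and strict up-sets in a totally ordered set.
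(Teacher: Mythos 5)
Your proof is correct and takes essentially the same approach as the paper: both arguments hinge on the observation that a nonempty subset of a chain is directed, so the Scott-closed complement would have to contain its supremum, yielding the contradiction. The only difference is cosmetic---you cover a Scott-open $U$ by strict up-sets $\{z: z>x\}$, while the paper writes a proper Scott-closed $F$ as the intersection $\bigcap_{x\in A}\downarrow x$ over its set of upper bounds $A$, which is exactly the De Morgan dual of your covering.
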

\begin{proof}
	Clearly, $\nu(C)\subseteq \sigma(C)$.

    Let $F$ be a proper closed set of $(C,\sigma(C))$ and $A$ be the set of upper bound of $F$. Then $A\not =\emptyset$.

     Since $F$ is a lower set of a chain, for any $c\in C$, either $c\in F$ or $c\in A$.

       For any $y\in \bigcap_{x\in A}\downarrow x$,  if $y\not\in F$, then $y\in A$ and $y$ is the smallest element in $A$, thus $y=\sup F$. But $F$ is Scott closed, and $F$ is a directed set (every chain is a directed subset), thus $sup F\in F$, so $y\in F$, a contradiction.  So $y\in F$ must hold. It follows that $F=\bigcap_{x\in A}\downarrow x\in\Gamma(C,\nu(C))$. Therefore,  $\nu(C)=\sigma(C)$.
\end{proof}

One characterization of minimal $T_0$ spaces  was given  in  \cite[Theorem 1]{larson-1969}.

\begin{proposition}\label{s of T0}
	Let $(X,\tau)$ be a $T_0$ space, $\mathcal{A}=\{cl(\{x\}):x\in X\}$ and $\le_{\tau}$ be the specialization order of $(X,\tau)$. Then the following conditions are equivalence.
	\II
	\I[(1)] $(X,\tau)$ is a minimal $T_0$ space.
	\I[(2)] $\{X-A:A\in\mathcal{A}\}$ is a base of $\tau$ and $cl(F)\in\mathcal{A}$ for each $F\subseteq_{fin}X$.
	\I[(3)] $(X,\le_{\tau})$ is a chain and $\tau=\nu(X,\le_\tau)$ where $\nu(X,\le_\tau)$ is the upper topology on $(X, \le_{\tau})$.
	\III
\end{proposition}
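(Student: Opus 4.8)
The plan is to prove the three conditions equivalent by establishing $(1)\Leftrightarrow(3)$ and $(2)\Leftrightarrow(3)$, using the specialization order $\le_{\tau}$ as the bridge. Two elementary facts will be used repeatedly. First, for \emph{every} $T_0$ topology the principal down-sets are exactly the point closures, $\downarrow x = cl_{\tau}(\{x\})$, hence closed; since $\{X-\downarrow x: x\in X\}$ is the defining subbase of $\nu(X,\le_{\tau})$, this gives the inclusion $\nu(X,\le_{\tau})\subseteq \tau$ for any $T_0$ space. Second, $\nu(X,\le)$ is itself $T_0$ and its specialization order is exactly $\le$ (one checks $cl_{\nu}(\{y\})=\downarrow y$: the inclusion $\subseteq$ is immediate from $\downarrow y$ closed, and for $z\le y$ every subbasic open $X-\downarrow w$ containing $z$ must contain $y$, since $y\le w$ would force $z\le w$).

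For $(1)\Rightarrow(3)$ I would argue as follows. Since $\nu(X,\le_{\tau})\subseteq\tau$ and $\nu(X,\le_{\tau})$ is a $T_0$ topology, minimality of $\tau$ forces $\tau=\nu(X,\le_{\tau})$. For the chain property I would proceed by contradiction: if $x,y$ were $\le_{\tau}$-incomparable, then Lemma \ref{construction} produces a $T_0$ topology $\tau^{*}\subseteq\tau$, and the closure computation $cl_{\tau^{*}}(\{x\})=cl_{\tau}(\{x\})\cup cl_{\tau}(\{y\})\neq cl_{\tau}(\{x\})$ shows $\tau^{*}\subsetneq\tau$, contradicting minimality; hence $\le_{\tau}$ is total. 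This is the step carrying the real content, and I expect it to be the \textbf{main obstacle}: the whole chain conclusion rests on the construction of Lemma \ref{construction} and the verification that it yields a \emph{strictly} coarser $T_0$ topology from any incomparable pair.

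For $(3)\Rightarrow(1)$, let $\tau'$ be any $T_0$ topology with $\tau'\subseteq\tau$. Coarsening enlarges closures, so $cl_{\tau}(\{y\})\subseteq cl_{\tau'}(\{y\})$ and therefore $\le_{\tau}\;\subseteq\;\le_{\tau'}$. Since $\le_{\tau}$ is total it is a maximal partial order, which forces $\le_{\tau'}=\le_{\tau}$. Applying the two basic facts once more gives $\tau=\nu(X,\le_{\tau})=\nu(X,\le_{\tau'})\subseteq\tau'$, so $\tau'=\tau$ and $\tau$ is minimal $T_0$.

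Finally, for $(3)\Leftrightarrow(2)$ the argument is bookkeeping on a chain. Assuming $(3)$: on a chain $(X-\downarrow x)\cap(X-\downarrow y)=X-\downarrow(\max\{x,y\})$, so finite intersections of subbasic sets stay of the form $X-\downarrow z$, whence $\{X-A:A\in\mathcal{A}\}$ is a base of $\tau=\nu(X,\le_{\tau})$ (with the empty intersection read as $X$); moreover $cl(F)=\bigcup_{x\in F}\downarrow x=\downarrow(\max F)\in\mathcal{A}$ for finite $F$. Conversely, assuming $(2)$: each base member $X-\downarrow x$ lies in $\nu(X,\le_{\tau})$, so $\tau\subseteq\nu(X,\le_{\tau})$, and combined with the always-valid reverse inclusion this gives $\tau=\nu(X,\le_{\tau})$; and for any $x,y$ the hypothesis $cl(\{x,y\})=\downarrow x\cup\downarrow y=\downarrow z\in\mathcal{A}$ yields $z\le x$ or $z\le y$ together with $x,y\le z$, forcing comparability, so $(X,\le_{\tau})$ is a chain. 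Chaining $(1)\Leftrightarrow(3)$ with $(2)\Leftrightarrow(3)$ then completes the equivalence.
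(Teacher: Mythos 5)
Your proposal is correct, but there is no paper proof to compare it against: the paper does not prove this proposition at all, quoting it as a known characterization from Larson \cite[Theorem 1]{larson-1969} in the sentence immediately preceding it. So your argument is a genuine addition rather than an alternative route, and it is very much in the spirit of the paper: the step you yourself flag as carrying the real content in $(1)\Rightarrow(3)$ --- feeding an incomparable pair $x,y$ into Lemma \ref{construction} and using $cl_{\tau^*}(\{x\})=cl_{\tau}(\{x\})\cup cl_{\tau}(\{y\})\neq cl_{\tau}(\{x\})$ to contradict minimality --- is exactly the mechanism the paper uses after Lemma \ref{n of sober} to obtain Corollary \ref{minimal sober is a chain}, its analogue for sober, well-filtered and d-spaces, so there is no circularity in borrowing it. The remaining ingredients (the inclusion $\nu(X,\le_{\tau})\subseteq\tau$ for $T_0$ spaces, the identity $cl_{\nu}(\{y\})=\downarrow y$ giving that $\nu(X,\le)$ is $T_0$ with specialization order $\le$, maximality of total orders among partial orders in $(3)\Rightarrow(1)$, and the chain bookkeeping for $(2)\Leftrightarrow(3)$) are all sound. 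What your proof buys is self-containedness; the citation buys brevity and defers precision to the original source.

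One caveat, which is an imprecision of the statement rather than a flaw in your reasoning: read literally, condition (2) fails for minimal $T_0$ spaces whose specialization order has a least element. For the Sierpinski space $X=\{0,1\}$, $\tau=\{\emptyset,\{1\},X\}$, which is minimal $T_0$, one has $\mathcal{A}=\{\{0\},X\}$, so $\{X-A:A\in\mathcal{A}\}=\{\{1\},\emptyset\}$, and $X$ is not a union of members of this family; likewise $cl(\emptyset)=\emptyset\notin\mathcal{A}$. Your parenthetical conventions --- reading the empty intersection as $X$ (equivalently, treating the family as a subbase, or adjoining $X$ to it) and restricting to nonempty finite $F$ --- are precisely what is needed to make the equivalence true, and you were right to make them explicit: without some such convention the implication $(3)\Rightarrow(2)$, and hence the proposition as stated, would be false, so no proof could do better.
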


A poset $P$ is sup-complete if for every nonempty subset $B\subseteq P$,  $\bigvee B$ exists. Thus a chain $P$ is sup-complete if and only if it is a dcpo.

\begin{lemma}\label{s of sober}
	Let $(X,\tau)$ be a $T_0$ space. If $(X,\le_{\tau})$ is a sup-complete chain and $\tau=\sigma(X,\le_{\tau})$, then $(X, \tau)$ is a minimal sober space (resp., well-filtered space, d-space).
\end{lemma}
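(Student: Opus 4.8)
The plan is to establish two facts: that $(X,\tau)$ is sober (hence also well-filtered and a d-space, by the implications recalled in the Preliminary), and that $(X,\tau)$ is a \emph{minimal} $T_0$ space. Minimality in each of the three classes then follows at once, since sober, well-filtered and d-space topologies are all $T_0$, so no topology in any of these classes can sit strictly below a minimal $T_0$ topology.

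First I would pin down the closed sets of $\sigma(X,\le_{\tau})$ using sup-completeness. Since $(X,\le_{\tau})$ is a chain, every nonempty subset is directed, and sup-completeness guarantees each such subset has a supremum in $X$. Hence, if $F$ is a nonempty Scott-closed set, then $F$ is a lower set which is itself a directed family with $a:=\sup F\in X$; being Scott-closed, $a\in F$, and then $F=\downarrow a$ because $F$ is a lower set bounded above by $a$. Thus the nonempty closed sets of $(X,\tau)$ are exactly the principal ideals $\downarrow a=cl_{\tau}(\{a\})$, $a\in X$. In particular every irreducible closed set is the closure of a singleton, and uniqueness of the point is forced by $(X,\tau)$ being $T_0$; hence $(X,\tau)$ is sober, and therefore also well-filtered and a d-space.

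For minimality I would invoke the characterization of minimal $T_0$ spaces. By Lemma \ref{lem}, $\nu(X,\le_{\tau})=\sigma(X,\le_{\tau})=\tau$, and $(X,\le_{\tau})$ is a chain by hypothesis, so condition (3) of Proposition \ref{s of T0} is satisfied, giving that $(X,\tau)$ is a minimal $T_0$ space. Consequently there is no $T_0$ topology on $X$ strictly coarser than $\tau$. Since each of the properties sober, well-filtered, and d-space entails $T_0$, any sober (resp.\ well-filtered, d-space) topology contained in $\tau$ cannot be strictly smaller, so it must equal $\tau$. As $(X,\tau)$ itself is sober (resp.\ well-filtered, a d-space), it is a minimal sober space (resp.\ minimal well-filtered space, minimal d-space).

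The routine steps, namely verifying that $\downarrow a$ is Scott-closed and that it equals $cl_{\tau}(\{a\})$, I would leave implicit. The one step deserving care, and the crux of the argument, is the identification of the nonempty Scott-closed sets with the principal ideals: this is precisely where sup-completeness of the chain is indispensable. Indeed, for a chain that is not sup-complete, such as $(\mathbb{N},\le)$, the whole space is an irreducible closed set that is not the closure of any point, so sobriety fails there. Everything else in the proof is bookkeeping resting on Lemma \ref{lem} and Proposition \ref{s of T0}.
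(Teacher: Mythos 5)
Your proof is correct and follows the same two-step decomposition as the paper: first establish sobriety, then establish minimality as a $T_0$ topology via Lemma \ref{lem} and Proposition \ref{s of T0}, and finally conclude minimality in all three classes from the fact that sober, well-filtered and d-space topologies are all $T_0$. The only divergence is in the sobriety step. The paper simply cites the standard facts that every sup-complete chain is a domain and that the Scott space of a domain is sober, whereas you prove sobriety from scratch: in a sup-complete chain every nonempty Scott-closed set $F$ is a directed lower set, hence contains $a=\sup F$ and equals $\downarrow a = cl_{\tau}(\{a\})$, so in particular every irreducible closed set is a point closure. Your argument is self-contained and actually shows slightly more (all nonempty closed sets, not just the irreducible ones, are point closures), at the cost of a few extra lines; the paper's route is shorter but leans on imported domain theory. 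Both are sound, and the minimality bookkeeping is identical in the two proofs.
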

\begin{proof} It is well-known that every sup-complete chain is a domain and the Scott space of each domain is sober \cite{Gier-2003}\cite{Goubault-2013}. Hence $(X, \tau)$ is sober.
	
By Lemma \ref{lem} and Proposition \ref{s of T0},  $(X,\tau)$ is a minimal $T_0$ space. Thus there is no $T_0$ topology strictly coarser than $\tau$. Hence $(X,\tau)$ is a minimal sober space.
Recall that every  sober space is well-filtered, every well-filtered space is a d-space and every d-space is $T_0$ \cite{Gier-2003}\cite{Goubault-2013}, it follows that $(X, \tau)$ is also a minimal well-filtered and minimal d-space.
\end{proof}

Now we have the main result of this section.

\begin{theorem}\label{e of sober}
	Let $(X, \tau)$ be a $T_0$ space. Then the following statements are equivalent.
	
	\II
	\I[(1)] $(X,\tau)$ is a minimal sober space.
	\I[(2)] $(X,\tau)$ is a minimal well-filtered space.
	\I[(3)] $(X,\tau)$ is a minimal d-space.
	\I[(4)] $(X,\le_\tau)$ is a sup-complete chain and $\tau=\sigma(X,\le_\tau)$.
	\III
\end{theorem}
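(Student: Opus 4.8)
The plan is to prove the theorem by showing that condition (4) is equivalent to each of (1), (2) and (3) separately; once (1), (2) and (3) are each seen to be equivalent to the single condition (4), the full list of equivalences follows at once. The forward implications (4) $\Rightarrow$ (1), (4) $\Rightarrow$ (2) and (4) $\Rightarrow$ (3) demand no new work, since they are exactly the content of Lemma \ref{s of sober}: whenever $(X,\le_\tau)$ is a sup-complete chain and $\tau=\sigma(X,\le_\tau)$, the space $(X,\tau)$ is simultaneously a minimal sober, a minimal well-filtered and a minimal d-space.

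For the reverse implications I would handle the three cases uniformly. Assume first that $(X,\tau)$ is a minimal sober space. By Corollary \ref{minimal sober is a chain}, $(X,\le_\tau)$ is a chain. Since every sober space is a d-space and the specialization poset of any d-space is a dcpo, $(X,\le_\tau)$ is a dcpo; as a chain that is a dcpo is sup-complete, $(X,\le_\tau)$ is in fact a sup-complete chain. Moreover, because $(X,\tau)$ is a d-space we already have $\tau\subseteq\sigma(X,\le_\tau)$, so it remains only to promote this inclusion to an equality.

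The key step is a sandwiching argument. Applying Lemma \ref{s of sober} to the Scott topology itself---whose specialization order coincides with $\le_\tau$, a sup-complete chain---shows that $\sigma(X,\le_\tau)$ is a minimal sober topology on $X$. But $\tau$ is a sober topology with $\tau\subseteq\sigma(X,\le_\tau)$; were the inclusion strict, $\tau$ would be a sober topology strictly coarser than the minimal sober topology $\sigma(X,\le_\tau)$, a contradiction. Hence $\tau=\sigma(X,\le_\tau)$, giving (4). The implications (2) $\Rightarrow$ (4) and (3) $\Rightarrow$ (4) run verbatim after replacing ``sober'' by ``well-filtered'' and by ``d-space'' respectively: Corollary \ref{minimal sober is a chain} still delivers a chain, the inclusion $\tau\subseteq\sigma(X,\le_\tau)$ still holds (well-filtered spaces are d-spaces, and d-spaces satisfy this inclusion by definition), and the ``minimal'' half of Lemma \ref{s of sober} again makes $\sigma(X,\le_\tau)$ minimal in the relevant class, so the same sandwich forces equality.

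I do not expect a genuine obstacle in the theorem itself: the heavy lifting has already been carried out in Lemma \ref{construction} and Lemma \ref{n of sober}, which produce from any pair of incomparable points a strictly coarser sober/well-filtered/d-topology and thereby force the specialization poset to be a chain, and in Lemma \ref{s of sober}, which supplies both directions of minimality. The one point warranting care is the \emph{direction} of the minimality comparison: minimality of $\sigma(X,\le_\tau)$ constrains what lies below it, and the inclusion $\tau\subseteq\sigma(X,\le_\tau)$ places $\tau$ precisely in that region, so the conclusion is $\tau=\sigma(X,\le_\tau)$ rather than the reverse inclusion. The only purely order-theoretic check is that a chain which is a dcpo is sup-complete, and this is immediate since every nonempty subset of a chain is directed.
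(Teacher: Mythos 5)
Your proposal is correct, and its skeleton matches the paper's: Lemma \ref{s of sober} handles $(4)\Rightarrow(1),(2),(3)$, and the reverse directions all start by invoking Corollary \ref{minimal sober is a chain} (i.e.\ Lemma \ref{n of sober}) to get a chain, then the d-space facts to get sup-completeness and the inclusion $\tau\subseteq\sigma(X,\le_\tau)$. Where you diverge is in the final step that upgrades this inclusion to an equality. The paper closes the argument with Lemma \ref{lem} via the sandwich $\nu(X,\le_\tau)\subseteq\tau\subseteq\sigma(X,\le_\tau)$: the upper topology is always coarser than $\tau$ (principal down-sets are point closures, hence $\tau$-closed), and on a chain $\nu=\sigma$, so $\tau=\sigma$ outright. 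You instead bootstrap: apply Lemma \ref{s of sober} to the topology $\sigma(X,\le_\tau)$ itself --- legitimate because the specialization order of the Scott topology of a poset is the original order, as noted in the paper's preliminaries --- conclude that $\sigma(X,\le_\tau)$ is a minimal sober (resp.\ well-filtered, d-space) topology, and then use the mere sobriety (resp.\ well-filteredness, d-space property) of $\tau$ together with $\tau\subseteq\sigma(X,\le_\tau)$ to force equality. Both closings are sound; yours is uniform across the three classes and recycles an already-proved lemma, while the paper's is more economical one level down (Lemma \ref{s of sober}'s minimality claim is itself proved from Lemma \ref{lem} and Proposition \ref{s of T0}) and yields the slightly stronger byproduct that $\tau$ coincides with the upper topology and is in fact a minimal $T_0$ topology, so nothing $T_0$ at all lies strictly below it. Your cautionary remarks --- the direction of the minimality comparison, and that a chain which is a dcpo is sup-complete because every nonempty subset of a chain is directed --- are exactly the right points to flag, and both are handled correctly.
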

\begin{proof}
By Lemma \ref{s of sober}, $(4)\Rightarrow (1),(2),(3)$. And	$(1),(2),(3)\Rightarrow (4)$, by Lemmas \ref{n of sober}, \ref{lem} and the fact that for any d-space (sober space, well-filtered space, respectively) $(X, \tau)$, $(X, \le_{\tau})$ is a dcpo (thus
$(X, \le_{\tau})$ is sup-complete if it is a chain).
\end{proof}

It is natural to wonder  whether for any sober topology $\tau$ on a set $X$,  there is a minimal sober topology $\mu \subseteq \tau$.
The answer is no, as shown by the following example.

\begin{example}
	Let $X=\mathbb{R}\cup \{\top\}$ and $\tau=\{\emptyset\}\cup\{X-F:F\subseteq_{fin}\mathbb{R}\}$.
	
	Obviously, $(X,\tau)$ is sober. Assume $\tau^*$ is a minimal sober topology with $\tau^*\subseteq \tau$. If $|cl_{\tau^*}(\{r\})|$ is infinite for some $r\in\mathbb{R}$, then $cl_{\tau^*}(\{r\})=X$ by $\tau^*\subseteq \tau$. It contradicts $X=cl_{\tau}(\{\top\})\subseteq cl_{\tau^*}(\{\top\})$ and $cl_{\tau^*}(\{r\})\not=cl_{\tau^*}(\{\top\})$ (because $\tau^*$ is $T_0$). Hence $|cl_{\tau^*}(\{r\})|$ is a finite number for each $r\in\mathbb{R}$.

 By Theorem \ref{e of sober}, $(X,\le_{\tau^*})$  is a chain. Hence $cl_{\tau^{*}}(\{r\})=\downarrow r=\{x\in X: x\le_{\tau^{*}} r\}$.

 It then follows that $|cl_{\tau^*}(\{r_1\})|=|cl_{\tau^*}(\{r_2\})|$ if and only if $r_1=r_2$. Thus  there is an injective map
 $f: \mathbb{R} \rightarrow \mathbb{N}$ such that $f(r)=|cl_{\tau^*}(r)|$. But it is impossible.
 Hence there is no minimal sober topology coarser than $\tau$.
\end{example}

At the end of this section, we prove that the soberification of a minimal $T_0$ space is a minimal sober space.

For any $T_0$ space $(X, \tau)$, the set $Irr_c(X,\tau)$ with the lower Vietoris topology $\tau^{*}=\{\diamondsuit U: U\in \tau\}$ is a sober space called the soberification of $(X, \tau)$. Here $\diamondsuit U=\{C\in Irr_c(X,\tau): C\cap U\not=\emptyset\}$.
 In addition, the specialization order $\le_{\tau^*}$ on $Irr_c(X,\tau)$ coincides with the inclusion order $\subseteq$ (see
\cite{Gier-2003}\cite{Goubault-2013}).

\begin{proposition}
	If  $(X, \tau)$ is a minimal $T_0$ space, then the soberification $(Irr_c(X,\tau),\tau^*)$ of $(X,\tau)$ is a minimal sober space.
\end{proposition}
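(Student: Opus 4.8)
The plan is to verify the criterion in Theorem \ref{e of sober}: it suffices to show that $(Irr_c(X,\tau),\le_{\tau^*})$ is a sup-complete chain and that $\tau^*=\sigma(Irr_c(X,\tau),\le_{\tau^*})$. Since the specialization order $\le_{\tau^*}$ coincides with the inclusion order $\subseteq$ on $Irr_c(X,\tau)$, I work throughout with the poset $(Irr_c(X,\tau),\subseteq)$.

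First I would pin down the structure of $Irr_c(X,\tau)$. Because $(X,\tau)$ is minimal $T_0$, Proposition \ref{s of T0} gives that $(X,\le_\tau)$ is a chain. In any space every closed set is a lower set for the specialization order, and lower sets of a chain are linearly ordered by inclusion (if $a\in L_1\setminus L_2$ and $b\in L_2\setminus L_1$, comparability of $a,b$ contradicts that $L_1,L_2$ are lower sets). Hence $\Gamma(X,\tau)$ is a chain under $\subseteq$. This has two consequences: every nonempty closed set $C$ is irreducible (if $C\subseteq F\cup G$ with $F,G$ closed and, say, $F\subseteq G$, then $C\subseteq G$), so $Irr_c(X,\tau)=\Gamma(X,\tau)\setminus\{\emptyset\}$; and $(Irr_c(X,\tau),\subseteq)$ is itself a chain. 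Sup-completeness is then immediate: for any nonempty $\mathcal{C}\subseteq Irr_c(X,\tau)$ the set $cl_\tau(\bigcup\mathcal{C})$ is a nonempty closed set, hence lies in $Irr_c(X,\tau)$, and it is clearly the least upper bound of $\mathcal{C}$. Thus $(Irr_c(X,\tau),\subseteq)$ is a sup-complete chain.

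It remains to identify the topology $\tau^*$, and here I expect the only real bookkeeping. I would compute the subbasic open sets $\diamondsuit U$ directly: for $U\in\tau$ with $U\ne X$ the complement $U^c$ is a nonempty closed set, so $U^c\in Irr_c(X,\tau)$ and $\diamondsuit U=\{C\in Irr_c(X,\tau): C\cap U\ne\emptyset\}=Irr_c(X,\tau)\setminus\{C\in Irr_c(X,\tau): C\subseteq U^c\}=Irr_c(X,\tau)\setminus\downarrow U^c$, while $\diamondsuit X=Irr_c(X,\tau)$ and $\diamondsuit\emptyset=\emptyset$. Conversely, for any $C\in Irr_c(X,\tau)$ one has $Irr_c(X,\tau)\setminus\downarrow C=\diamondsuit(C^c)$ with $C^c\in\tau$. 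Therefore $\{\diamondsuit U:U\in\tau\}$ and the subbase $\{Irr_c(X,\tau)\setminus\downarrow C:C\in Irr_c(X,\tau)\}$ of the upper topology generate the same topology, i.e. $\tau^*=\nu(Irr_c(X,\tau),\subseteq)$. Since $(Irr_c(X,\tau),\subseteq)$ is a chain, Lemma \ref{lem} yields $\nu(Irr_c(X,\tau),\subseteq)=\sigma(Irr_c(X,\tau),\subseteq)$, so $\tau^*=\sigma(Irr_c(X,\tau),\subseteq)$.

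Combining the two previous steps, $(Irr_c(X,\tau),\le_{\tau^*})$ is a sup-complete chain carrying its Scott topology, so Theorem \ref{e of sober} declares $(Irr_c(X,\tau),\tau^*)$ a minimal sober space. The main obstacle is the third paragraph: one must check carefully that $\diamondsuit$ sends open sets to complements of principal down-sets (using both that $\le_{\tau^*}$ is $\subseteq$ and that every nonempty closed set is irreducible, so that $U^c$ is genuinely an element of $Irr_c(X,\tau)$) and handle the degenerate cases $U=X$ and $U=\emptyset$; everything else is a direct application of the already-established characterizations. I note in passing that the argument only uses that $(X,\le_\tau)$ is a chain, so minimality of the $T_0$ space enters solely through Proposition \ref{s of T0}.
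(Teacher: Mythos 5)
Your proof is correct, and its skeleton is the paper's: reduce to condition (4) of Theorem \ref{e of sober}, get the chain property of $(Irr_c(X,\tau),\subseteq)$ from Proposition \ref{s of T0} together with the fact that closed sets are lower sets of the chain $(X,\le_\tau)$, and finish with Lemma \ref{lem}. The divergence is in how the two remaining facts are checked, and your route is the more elementary one. For sup-completeness the paper invokes sobriety of the soberification: the specialization poset of a sober space is a dcpo, and a chain that is a dcpo is sup-complete; you instead exhibit the supremum explicitly as $cl_\tau(\bigcup\mathcal{C})$, which lands in $Irr_c(X,\tau)$ thanks to your observation that a chain of closed sets forces $Irr_c(X,\tau)=\Gamma(X,\tau)\setminus\{\emptyset\}$. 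For identifying the topology, the paper again argues softly: $\nu(Irr_c(X,\tau),\subseteq)\subseteq\tau^*\subseteq\sigma(Irr_c(X,\tau),\subseteq)$ (the left inclusion because the upper topology is the coarsest topology inducing the specialization order, the right because sober spaces are d-spaces), and Lemma \ref{lem} collapses the sandwich; you compute $\diamondsuit U=Irr_c(X,\tau)\setminus\mathord{\downarrow}U^c$ together with its converse to get the exact equality $\tau^*=\nu(Irr_c(X,\tau),\subseteq)$ before applying Lemma \ref{lem}. What each buys: the paper's proof is shorter because the general theory (sober $\Rightarrow$ dcpo, sober $\Rightarrow$ d-space) does the work; yours never uses the standard fact that the soberification is sober --- it re-derives sobriety via Theorem \ref{e of sober} --- and it produces the sharper structural by-products $Irr_c(X,\tau)=\Gamma(X,\tau)\setminus\{\emptyset\}$ and $\tau^*=\nu$. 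Two small remarks: your ``generate the same topology'' step tacitly uses that $\{\diamondsuit U:U\in\tau\}$ is already a topology (true, since $\diamondsuit$ preserves arbitrary unions and, on irreducible sets, finite intersections), so containing the subbase of $\nu$ really does give $\nu\subseteq\tau^*$; and your closing observation that minimality enters only through the chain property applies equally to the paper's proof.
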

\begin{proof} By Proposition \ref{s of T0}, $(X, \le_{\tau})$ is a chain. Also every closed set of $(X, \tau)$ is a lower set of $(X, \le_{\tau})$. In particular, every member of  $Irr_c(X,\tau)$ is a lower set. As $(X, \le_{\tau})$ is a chain, every two lower sets are comparable, it follows that $(Irr_c(X,\tau),\le_{\tau^*})$ is a chain.  Also $(Irr_c(X,\tau),\le_{\tau^*})$ is a dcpo, thus sup-complete.

Since  $\tau^*$ is sober, we have $$\nu(Irr_c(X,\tau),\le_{\tau^*})\subseteq\tau^*\subseteq \sigma(Irr_c(X,\tau),\le_{\tau^*}).$$ Then $\tau^{*}=\sigma(Irr_c(X,\tau),\le_{\tau^*})$, by Lemma \ref{lem}.

By Theorem \ref{e of sober},  $(Irr_c(X,\tau),\tau^*)$ is a minimal sober space.
\end{proof}

	\section{Summary and further work}
	In this paper we study the  sober topologies in the lattice of all topologies on a set. The main results are (1) every $T_1$ topology is the join of some sober topologies; (2) every topology is the meet of some sober topologies; (3) the set of all sober topologies is directed complete; (4) the minimal sober spaces are precisely the sup-complete chains equipped with the Scott topology.

There are still some problems on sober topologies deserve to be considered.
	
	 There are $T_0$ topologies which are not the join of sober topologies. Thus we have the following  problems:
	
	(1)   Which $T_0$  topologies are the joins of sober topologies?

    (2)  Which $T_0$  topologies are the joins of finite number of sober topologies?
	
	In \cite{steiner-1966}, Steiner proved that the lattice of topologies on each  set is complemented.
	Thus we have the following problem.
	
	(3) Which topologies have a  sober complement?
	
	Given a set $X$ with cardinality $\kappa$, people need to study the cardinality of all topologies on $X$ of certain types (such as $T_1$, Hausdorff, etc.)
	
	On sobriety, we have the following problem.
	
	(4) What is the cardinality of all sober topologies on a set $X$ with  $|X|=\kappa$?
	\vskip 1cm
	\section*{Acknowledgements}
	This work is supported by the National Nature Science Foundation of China (Grant No.:12231007).

    \section*{Declaration of interests} The authors declare that they have no known competing financial interests
or personal relationships that could have appeared to influence the work reported in this paper.

	\section*{Reference}
	\bibliographystyle{plain}

    \end{document}